\documentclass[a4paper,14pt, reqno]{amsart}
\usepackage{bbm}
\numberwithin{equation}{section}
\usepackage{mathdots}
\usepackage{leftidx}
\usepackage{mathrsfs}
\usepackage{amsmath,amssymb, amsthm}
\usepackage[all,poly,knot]{xy}
\usepackage[dvips]{graphicx}
\usepackage[colorlinks,linkcolor=black,anchorcolor=black,citecolor=black]{hyperref}
\usepackage{color}
\usepackage{tikz}
\usepackage{picture}
\usepackage{caption}
\usepackage[all]{xy}
\usepackage{enumerate}
\usepackage{makecell}
\usepackage{mathtools}
	\usepackage{ulem}
\usepackage{extarrows}
\usepackage{graphicx}
\usepackage{caption}
\usepackage{subcaption}
\usepackage{float}
\usepackage{blkarray}
\numberwithin{equation}{section}
\newtheorem{Thm}{Theorem}[section]
\newtheorem{Lem}[Thm]{Lemma}
\newtheorem{Def}[Thm]{Definition}
\newtheorem{Cor}[Thm]{Corollary}
\newtheorem{Prop}[Thm]{Proposition}
\newtheorem{Ex}[Thm]{Example}
\newtheorem{Rem}[Thm]{Remark}

\newcommand{\bk}{{\bf k}}
\newcommand{\bm}{{\bf m}}
\newcommand{\val}{{\bf val}}
\newcommand{\grd}{{\bf grd}}
\newcommand{\ang}[1]{\left\langle #1 \right\rangle}

\DeclareMathOperator{\rad}{rad}
\DeclareMathOperator{\soc}{soc}
\def\gr{\mathrm{gr}}
\def\Ga{\Gamma}

\usepackage{mathrsfs}
\usepackage{calc,pifont}

\newcommand\defeqi{\stackrel{\text{$i$}}{\rule[0.6mm]{0.6cm}{0.1mm}}}
\newcommand\defeqj{\stackrel{\text{$j$}}{\rule[0.6mm]{0.6cm}{0.1mm}}}

	\title[Rogue wave]{The associated graded algebras of biserial fractional Brauer graph algebras}
	
	\author[Jing Guo]{Jing Guo}
	\address{
		School of Mathematics and Statistics\\
		Qingdao University \\Qingdao\\
		Shandong 266071   \\
		P.R.China}
	\email{jingguo@qdu.edu.cn}
	\author[Nengqun Li]{Nengqun Li*}
	\thanks{*Corresponding author.}
	\address{School of Mathematics\\ Liaoning Normal University\\ Dalian 116029\\ P.R.China}
	\email{linengqun@lnnu.edu.cn}
	\author[Yuming Liu]{Yuming Liu}
	\address{School of Mathematical Sciences\\ Laboratory of Mathematics and Complex Systems\\ Beijing Normal University\\Beijing 100875  \\ P.R.China}
	\email{ymliu@bnu.edu.cn}
	\date{}				
	\keywords{Associated graded algebra with radical filtration, Brauer graph algebra, Covering theory, Biserial fractional Brauer graph algebra, Representation type}
	\subjclass[2020]{16G10, 16G20, 16G60}		
\begin{document}
	
\begin{abstract}
For any finite-dimensional algebra $A$, one can associate a graded algebra (denoted by $\mathrm{gr}(A)$) with the radical filtration of $A$. An interesting question is to compare the representation theories of $A$ and $\mathrm{gr}(A)$. When $A$ is a Brauer graph algebra with defining Brauer graph $\Gamma$, we have described the representation type of $\mathrm{gr}(A)$ in terms of the information of $\Gamma$. In this paper, we describe the representation type of $\mathrm{gr}(A)$ when $A$ is a biserial fractional Brauer graph algebra - a notion that properly generalizes a Brauer graph algebra - in terms of the information of the defining biserial fractional Brauer graph $\Gamma$ of $A$. 
\end{abstract}
	
\maketitle \vspace{-0.9cm}
	
\section{Introduction}
	
Following Drozd, all finite-dimensional $\bk$-algebras over an algebraically closed field $\bk$ split into two classes: those having tame representation type and those having wild representation type. In order to understand the representation theory of a class of finite-dimensional algebras, the first step is often to describe their representation types. The representation types of several classes of finite-dimensional algebras are well-studied, among them is the class of special biserial algebras.

Special biserial algebras are defined by quivers with relations and have tame representation type (see for example, a brief introduction in \cite{E}). Since there are different levels on tameness: finite representation type, domestic type, type of polynomial growth, type of nonpolynomial growth. One would like to know when a given special biserial algebra have a specific level of tameness. For self-injective special biserial algebras, a general answer for this question was given by Erdmann and Skowro\'{n}ski in \cite{ES}. In particular, they showed that a self-injective special biserial algebra is representation-infinite domestic if and only if it is representation-infinite of polynomial growth if and only if the number of equivalence classes of bands in the associated string algebra is a positive integer.

For symmetric special biserial algebras, Bocian and Skowro\'{n}ski gave a more concrete description on the representation type in \cite{BS} (see also \cite[Corollary 2.9 and Theorem 5.1]{S}). Symmetric special biserial algebras are also called Brauer graph algebras (abbr. ${\bf BGA}$s) since they can be defined by combinatorial data - Brauer graphs (abbr. ${\bf BG}$s).

\begin{Thm} [see {\cite{BS} or \cite{S}}]  \label{Brauer-graph-domestic}
	Let $A$ be the ${\bf BGA}$ with a ${\bf BG}$ $\Ga=(V(\Ga), E(\Ga),\bm)$, where $V(\Ga)$ is the vertex set, $E(\Ga)$ is the edge set and $\bm$ is the multiplicity function of $\Ga$. Then
	\begin{enumerate}[$(a)$]
		\item $A$ is of finite representation type if and only if $\Ga$ is a Brauer tree (abbr. ${\bf BT}$).
		\item $A$ is $1$-domestic if and only if one of the following holds
		\begin{enumerate}[$(1)$]
			\item $\Ga$ is a tree with $\bm(v)=2$ for exactly two vertices $v=w_0,w_1\in V(\Ga)$ and $\bm(v)=1$ for all $v\in V(\Ga)$, $v\neq w_0,w_1$.
			\item $\Ga$ is a graph with a unique cycle of odd length and $\bm(v)=1$ for all $v\in V(\Ga)$.
		\end{enumerate}
		\item $A$ is $2$-domestic if and only if $\Ga$ is a graph with a unique cycle of even length and $\bm(v)=1$ for all $v\in V(\Ga)$.
		\item There are no $n$-domestic ${\bf BGA}$s for $n\geq3$.
	\end{enumerate}
\end{Thm}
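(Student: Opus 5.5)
This is a cited classical result, and I would reprove it through the string-and-band description of indecomposable modules. A Brauer graph algebra $A$ with Brauer graph $\Ga$ is a symmetric special biserial algebra. Hence every non-projective indecomposable $A$-module is an indecomposable module over the string algebra $\bar A=A/\soc(A)$. By Butler--Ringel, these are string modules $M(C)$ and band modules $M(B,\lambda,n)$. The plan is therefore to count bands of $\bar A$ in terms of $\Ga$. Recall that $A$ is representation-finite iff $\bar A$ has no band. By \cite{ES}, for self-injective special biserial algebras, $A$ is $n$-domestic iff $\bar A$ has exactly $n$ equivalence classes of bands, up to rotation and inversion, where $n\geq 1$. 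All four parts then reduce to the combinatorial problem of counting band classes.

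First I would translate strings of $\bar A$ into walks on $\Ga$. Arrows of the quiver correspond to successor pairs of half-edges in the cyclic ordering around each vertex $v$. The maximal paths around $v$ have length $\bm(v)\cdot\mathrm{val}(v)-1$ in $\bar A$. A string then corresponds to a walk in $\Ga$ that alternates direct and inverse "arcs" around vertices. At each vertex $v$ it turns by some number of half-edges strictly less than a full $\bm(v)$-fold rotation. A band corresponds to a closed such walk that is not a proper power. From this dictionary I would derive two facts:
\begin{enumerate}[(i)]
\item A band must use either a cycle of $\Ga$ or at least two vertices with $\bm>1$, counting a loop or multiple edge as a cycle.
\item Conversely, each such configuration produces bands.
\end{enumerate}
Part (a) follows at once. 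If $\Ga$ is a tree with at most one exceptional vertex, no closed alternating walk can return without a full rotation at a non-exceptional vertex. Otherwise a band exists.

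For (b), (c) and (d), I would do a case count. Suppose $\Ga$ is a tree with exactly two vertices $w_0,w_1$ of multiplicity $2$. Walking around the unique path $w_0\to w_1$ and turning once fully at each end gives exactly one band class. Suppose $\Ga$ has a unique cycle and $\bm\equiv 1$. There are two ways to traverse the cycle, going around on the "inside" or the "outside" of the ribbon graph. The alternation condition of direct and inverse letters, together with the cycle's length parity, decides the outcome. If the length is even, the two traversals are distinct bands. If the length is odd, the alternating pattern forces them to be glued into a single band of doubled length. This gives $1$ or $2$ classes. In every other case we need to exhibit infinitely many band classes, and then by \cite{ES} the algebra is not domestic. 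These cases are: two independent cycles, a cycle plus an exceptional vertex, three exceptional vertices, one vertex with $\bm\geq 3$ together with another exceptional vertex, and so on. To produce the bands I would take two distinct bands $B_1,B_2$ sharing a vertex and form the words $B_1^{k}B_2$, which are pairwise inequivalent bands. That settles (d) as well, since only the listed cases are domestic and give at most $2$ classes.

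The main obstacle is the parity argument in the unique-cycle case. One must carefully check that for odd cycle length the alternation of direct and inverse letters forces the inner and outer traversals to merge into one band. One must also check that at trivial-multiplicity leaves and trees hanging off the cycle no extra bands arise. In addition, the reduction to a minimal list of non-domestic configurations must be exhaustive; I would organize it by the first Betti number of $\Ga$ plus the number of exceptional vertices, weighted by $\bm-1$.
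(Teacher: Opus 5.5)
The paper gives no proof of this statement. It is quoted from \cite{BS} (see also \cite{S}), where it is obtained within the framework of self-injective algebras of Euclidean type; that framework yields structural information beyond a count of bands. Your route is different and more elementary: reduce to counting band classes of $\overline{A}$ and count them directly on $\Ga$. It is essentially correct, and it is the same method the paper uses for $\gr(A)$.

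For the exact value of $n$ in (b)--(d), you need the lower bound $\mu_A(d)\ge n$ of Lemma~\ref{number-of-band-and-bimodules}. This is supplied by Proposition~\ref{number-of-band-and-representation-type} and Theorem~\ref{Theorem}, so cite those as well as \cite{ES}.

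Your dictionary is right:
\begin{itemize}
\item a string is a walk on $\Ga$ whose successive turns alternate between direct and inverse;
\item a turn at $v$ has length less than $\bm(v)\val(v)$;
\item a backtrack at $v$ is possible only if $\bm(v)\ge 2$.
\end{itemize}
A cyclically closed walk in a tree needs backtracks at two distinct vertices, so (a) follows. In the domestic configurations every turn is forced once its type is fixed, and no walk can enter a multiplicity-free hanging subtree and come back. So the counts $1$ and $2$ come out as you claim.

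Three points need correcting or tightening.

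First, the ``inside/outside'' picture is inaccurate. A direct turn always sweeps the angle on one fixed side of the direction of travel, and an inverse turn sweeps the other side. Since types alternate, every band switches sides at every vertex, and an all-inside traversal is not even a string. The correct dichotomy is the type of the turn at one fixed cycle vertex:
\begin{itemize}
\item for even length, both choices close up after one traversal and give two inequivalent bands;
\item for odd length, the pattern closes only after two traversals, and the two choices are rotations of a single band.
\end{itemize}

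Second, $B_1^kB_2$ need not be a string when $B_1$ and $B_2$ merely share a vertex. A junction may produce $\alpha\alpha^{-1}$ or a path lying in the ideal. Concatenate instead at a common peak, replacing $B_1$ by $B_1^{-1}$ if necessary, as in the $\mathrm{top}(b)$ argument in the proof of Theorem~\ref{Theorem}. Then Lemma~\ref{not-of-polynomial-growth}, applied to $B_1^{|B_2|}$ and $B_2^{|B_1|}$, gives non-domesticity without any check that $B_1^kB_2$ is primitive.

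Third, bookkeeping by Betti number plus $\sum_v(\bm(v)-1)$ does not separate the cases. A Brauer tree with $\bm_0=3$ and a tree with two vertices of multiplicity $2$ carry the same data. It is also unnecessary, because your four configurations already cover every Brauer graph outside (a)--(c):
\begin{itemize}
\item two independent cycles;
\item a cycle plus a vertex with $\bm>1$;
\item a tree with three exceptional vertices;
\item a tree with two exceptional vertices, one of multiplicity at least $3$.
\end{itemize}
You only need to observe that bands supported on such a core persist when edges are added or multiplicities raised, since the required turns remain nonzero paths in $\overline{A}$.
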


Recently, a general class of self-injective algebras was introduced and studied in \cite{LL1,LL2,LL3}, namely the fractional Brauer configuration algebras of type $S$ (abbr. {\bf $f_{s}$-BCA}s). A small subclass of {\bf $f_{s}$-BCA} is fractional Brauer graph algebras of type $MS$ (abbr. {\bf $f_{ms}$-BGA}s), which is named in the present paper as biserial fractional Brauer graph algebras (abbr. ${\bf bFBGA}$s) since ${\bf bFBGA}$s are special biserial and proper generalization of ${\bf BGA}$s. (Note that in \cite{X} Xing has already used the terminology biserial fractional Brauer graph algebras to name {\bf $f_{ms}$-BGA}s.) Similar as a ${\bf BGA}$, each ${\bf bFBGA}$ is defined by some combinatorial data - biserial fractional Brauer graph (abbr. ${\bf bFBG}$). Given a ${\bf bFBG}$ $\Gamma$, denote by $\nu$ its Nakayama automorphism, then we can define a quotient $\Gamma/\ang{\nu}$ (which is a modified Brauer graph or ${\bf mBG}$ for short), which induces a ${\bf BG}$ $R_{\Gamma}$ called the reduced form of $\Gamma$. Denote by $A_{\Gamma}$ (resp. $A_{R_{\Gamma}}$) the corresponding ${\bf bFBGA}$ (resp. ${\bf BGA}$). It turns out that the ${\bf bFBGA}$ $A_{\Gamma}$ and the ${\bf BGA}$ $A_{R_{\Gamma}}$ have the same representation type:

\begin{Thm} [see {\cite[Theorem 2.35]{LL3}}]
Let $\Gamma$ be a ${\bf bFBG}$. Then $A_{\Gamma}$ is of finite representation type (resp. domestic) if and only if $A_{R_{\Gamma}}$ is of finite representation type (resp. domestic).
\end{Thm}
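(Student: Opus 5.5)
We want to show that $A_\Gamma$ and $A_{R_\Gamma}$ are both of finite type or both not, and both domestic or both not. The plan is to relate the two algebras through covering theory.

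First, I would realize $A_\Gamma$ as a Galois covering of a skew-group-type algebra built from $\Gamma/\langle\nu\rangle$. More directly, I would compare the string combinatorics of $A_\Gamma$ and $A_{R_\Gamma}$. Both algebras are special biserial, so their indecomposable non-projective modules are string and band modules. Representation-finiteness means there are no bands. Domesticity, for self-injective special biserial algebras, holds by the Erdmann–Skowroński criterion recalled in the introduction exactly when the number of equivalence classes of bands is a positive integer. So it suffices to show two things: $A_\Gamma$ has no bands iff $A_{R_\Gamma}$ has none, and $A_\Gamma$ has finitely many band classes iff $A_{R_\Gamma}$ does.

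Second, I would construct a morphism of bound quivers $F\colon (Q_\Gamma,I_\Gamma)\to(Q_{R_\Gamma},I_{R_\Gamma})$. It is induced by the quotient map $\Gamma\to\Gamma/\langle\nu\rangle$ followed by passage to the reduced form. It sends the half-edge cycles of $\Gamma$ (the successor permutation around each vertex, with its fractional degree) onto the cyclic orderings at the vertices of $R_\Gamma$. The map should be a covering in the sense of Bongartz–Gabriel: locally bijective on arrows at each vertex, and compatible with the commutativity relations and the zero relations.

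The fractional nature enters here. A vertex of $\Gamma$ whose successor cycle has length $\ell$ wraps $\ell/e$ times around a vertex of $R_\Gamma$ of valency $e$. This multiplicity is recorded in $\bm$ of $R_\Gamma$, and the maximal paths must correspond under $F$. Granting this, $F$ induces a push-down functor $F_\lambda$ on modules that maps strings to strings and bands to bands. Conversely, every string of $A_{R_\Gamma}$ lifts through $F$ to a walk in $Q_\Gamma$, because $F$ is a covering of the underlying graphs. The lift of a band is either a band or a string that closes up only after going around finitely many times, with the number of turns bounded by the order of $\nu$.

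Hence bands of $A_\Gamma$ map onto bands of $A_{R_\Gamma}$, and every band of $A_{R_\Gamma}$ has a power lifting to a band of $A_\Gamma$. Moreover, each fibre is finite because $\langle\nu\rangle$ is a finite group. Both equivalences then follow.

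The main obstacle is the second step: verifying that $F$ really is a covering of bound quivers. That is, the relations of $A_\Gamma$, which are defined via fractional-degree data and the fixed-point behaviour of $\nu$, must map exactly onto those of $A_{R_\Gamma}$. Particular care is needed at vertices or edges fixed by $\nu$, where the quotient is not free and the reduced form modifies the graph, for instance by folding an edge into a loop or a truncated vertex. I would first treat the case where $\nu$ acts freely, where $F$ is a genuine Galois covering with group $\langle\nu\rangle$. I would then handle fixed points by a local analysis showing that the string combinatorics near such points is preserved, even though $F$ is no longer Galois.
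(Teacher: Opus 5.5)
Your argument works when $\Gamma/\langle\nu\rangle$ has no isolated half-edges. In that case $R_\Gamma=\Gamma/\langle\nu\rangle$, and the quotient map is a Galois covering of ${\bf bFBG}$s with group $\langle\nu\rangle$. It induces a Galois covering of bound quivers, which descends to the string algebras $\overline{A}$ (Proposition~\ref{prop-induced-covering}). Your band-lifting argument with finite fibres then transfers emptiness and finiteness of the band sets. The gap is in the remaining case, which is exactly when the reduced form differs from the quotient; there your central construction cannot work.

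\textbf{What actually happens in that case.} $\langle\nu\rangle$ always acts freely on half-edges and fixes every vertex, so "fixed points" are not the issue. Instead, some power $\nu^k$ sends a half-edge $h$ to the other half $\iota(h)$ of its edge. That orbit becomes an isolated half-edge of $\Gamma/\langle\nu\rangle$, and $R_\Gamma$ is two copies of $\Gamma/\langle\nu\rangle$ glued along these half-edges. The natural map therefore goes $R_\Gamma\to\Gamma/\langle\nu\rangle$ (folding), not the other way.

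\textbf{Why no covering $F\colon(Q_\Gamma,I_\Gamma)\to(Q_{R_\Gamma},I_{R_\Gamma})$ can exist.} Outside the degenerate Nakayama cases, the arrows of $Q_\Gamma$ split into successor cycles (consecutive arrows with nonzero composition), one for each non-truncated vertex of $\Gamma$. A quiver covering compatible with the zero relations maps successor cycles into successor cycles. It is also surjective onto the connected quiver $Q_{R_\Gamma}$. But $R_\Gamma$ has twice as many non-truncated vertices as $\Gamma$: every vertex is doubled and $\bm(v)\val(v)$ is unchanged. For $\Gamma$ as in Example~\ref{Example-BT}~$(1)$, both quivers have two vertices and four arrows, so $F$ would have to be an isomorphism. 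Yet the nonzero paths of length two form one $4$-cycle in $A_\Gamma$ and two $2$-cycles in $A_{R_\Gamma}$. The obstruction is global, so no local analysis can repair it.

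\textbf{The missing idea is a common Galois cover.} Let $\Gamma'$ consist of two copies of $\Gamma$ (same multiplicities and cyclic orderings, half-edges $h_1,h_2$). Replace every edge $\{h,h'\}$ with $h'\in\langle\nu\rangle h$ by the crossed edges $\{h_1,h'_2\}$ and $\{h_2,h'_1\}$; all other edges stay inside each copy.
\begin{itemize}
\item Swapping the copies is an admissible automorphism of $\Gamma'$ whose quotient is $\Gamma$.
\item The Nakayama automorphism $\nu'\colon h_i\mapsto\nu(h)_i$ of $\Gamma'$ flips no edge, and $\Gamma'/\langle\nu'\rangle\cong R_\Gamma$.
\end{itemize}
This gives Galois coverings $\Gamma'\to\Gamma$ and $\Gamma'\to R_\Gamma$ with finite groups. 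They induce Galois coverings of bound quivers $(Q_{\Gamma'},I_{\Gamma'})\to(Q_\Gamma,I_\Gamma)$ and $(Q_{\Gamma'},I_{\Gamma'})\to(Q_{R_\Gamma},I_{R_\Gamma})$. Applying your band comparison to each shows that $A_\Gamma$, $A_{\Gamma'}$ and $A_{R_\Gamma}$ simultaneously have no bands, respectively finitely many band classes. This is the structure the paper uses in Lemma~\ref{two-coverings} and Theorem~\ref{second-covering}, following \cite{LL3}.
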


On the other hand, for any finite-dimensional algebra $A$, one can associate a graded algebra (denoted by $\mathrm{gr}(A)$) with the radical filtration of $A$. As a (geometric) degeneration of $A$, $\mathrm{gr}(A)$ has received much attention in representation theory (see \cite{CPS,Ga,Ge,RR}). An interesting question arising from these studies is to compare the representation theories of $A$ and $\mathrm{gr}(A)$. According to \cite[Lemma 2.9]{GL}, when $A$ is special biserial, $\mathrm{gr}(A)$ is also special biserial, so one would also like to know when $\mathrm{gr}(A)$ has a specific level of tameness in this case.

When $A$ is a ${\bf BGA}$ associated with a ${\bf BG}$ $\Ga$, the representation type of $\mathrm{gr}(A)$ in terms of the information of $\Ga$ (equivalently, the information of $A$) was described in \cite{GL,GLY}. Rather interestingly, the domestic type of $\mathrm{gr}(A)$ can be described similarly as Bocian and Skowro\'{n}ski's description for $A$ together with the behaviour of a graded degree function defined on $\Ga$, see \cite[Theorem 4.5]{GL} and \cite[Theorem 3.18]{GLY} (or see Theorems \ref{grA-finite-type}, \ref{grA-infinite-type} below) for the details.

Now let $A_{\Gamma}$ be a ${\bf bFBGA}$ associated with a ${\bf bFBG}$ $\Gamma$. It is natural to ask how to describe the representation type of the associated graded algebra $\mathrm{gr}(A_{\Gamma})$ in terms of the information of $\Gamma$ (equivalently, the information of $A_{\Gamma}$). In the present paper we reduce the representation type problem of $\mathrm{gr}(A_{\Gamma})$ to that of $\mathrm{gr}(A_{R_{\Gamma}})$, where $A_{R_{\Gamma}}$ is the ${\bf BGA}$ defined by the reduced form $R_{\Gamma}$ of $\Gamma$. The first main result in this paper is as follows.

\begin{Thm}[see Theorem \ref{Thm-rep-typ}] \label{main-result}
	Let $A$ be a ${\bf bFBGA}$ associated with some ${\bf bFBG}$ $\Ga$ and $\mathrm{gr}(A)$ the graded algebra associated with the radical filtration of $A$. Suppose that $\nu$ is the Nakayama automorphism of $\Ga$ and $R_{\Gamma}$ is the reduced form of $\Ga$. Let $B$ be the ${\bf BGA}$ associated with $R_{\Gamma}$ and $\gr(B)$ the graded algebra associated with the radical filtration of $B$.  		
	Then we have the following.
\begin{enumerate}[$(1)$]
\item If $\Ga/\ang{\nu}$ is one of the four cases: (i) a ${\bf BT}$; (ii) a representation-finite modified Brauer tree (abbr. ${\bf mBT}$); (iii) a domestic ${\bf BG}$; (iv) a domestic ${\bf mBT}$, then
	\begin{enumerate}
		\item [$(1.1)$] $\gr(A)$ is of finite representation type if and only if $\gr(B)$ is of finite representation type.
		\item [$(1.2)$] the following are equivalent:
		\begin{enumerate}
			\item [$(1.2.1)$] 	$\gr(A)$ is domestic.
			\item [$(1.2.2)$]  $\gr(A)$ is of polynomial growth.
			\item [$(1.2.3)$] $\gr(B)$ is  domestic.
			\item [$(1.2.4)$]  $\gr(B)$ is of polynomial growth.
		\end{enumerate}
\end{enumerate}
\item If $\Ga/\ang{\nu}$ does not belong to the four cases in $(1)$, then both $\gr(A)$ and $\gr(B)$ are nondomestic and not of polynomial growth.
\end{enumerate}
\end{Thm}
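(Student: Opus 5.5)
The plan is to realise $\gr(A)$ as a Galois covering, or at least a covering of quivers with relations, of $\gr(B)$-type data. Then Galois covering theory and the known description of representation type for gradings of Brauer graph algebras (Theorems \ref{grA-finite-type}, \ref{grA-infinite-type}) do the rest.

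\textbf{Step 1: the covering.} Recall that for the ${\bf bFBG}$ $\Ga$ the Nakayama automorphism $\nu$ acts on $\Ga$, and the algebra $A=A_\Ga$ carries an action of the cyclic group $G=\ang{\nu}$, or of a finite cyclic quotient of it. The orbit algebra $A/G$ is the algebra of the ${\bf mBG}$ $\Ga/\ang{\nu}$. The relations defining a ${\bf bFBGA}$ are homogeneous with respect to path length, except for commutativity relations between the two maximal paths at a vertex. Passing to the radical filtration replaces each such relation by its leading terms.

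\textbf{Step 2: compatibility with $\gr$.} The covering functor preserves the radical, because it is a Galois covering with free action on vertices. Hence it induces a Galois covering $\gr(A)\to \gr(A/G)$. From this we get the following:
\begin{itemize}
\item $\gr(A)$ is representation-finite iff $\gr(A/G)$ is (Gabriel, for coverings by a finite group in characteristic-free form via the special biserial / string combinatorics);
\item $\gr(A)$ is domestic or of polynomial growth iff the same holds for $\gr(A/G)$, by comparing bands: bands of $\gr(A)$ map to bands of $\gr(A/G)$ with finite fibres.
\end{itemize}
To avoid characteristic issues I would argue directly with strings and bands of these special biserial algebras (they are special biserial by \cite[Lemma 2.9]{GL}), rather than through push-down functors.

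\textbf{Step 3: from the ${\bf mBG}$ to $R_\Ga$.} Next compare $\gr(A/G)$ with $\gr(B)$, where $B=A_{R_\Ga}$. The ${\bf mBG}$ $\Ga/\ang{\nu}$ differs from the ${\bf BG}$ $R_\Ga$ only at special (loop/"half-edge") local configurations. I would check case by case, for the four cases (i)–(iv), that the string and band combinatorics of $\gr(A/G)$ and $\gr(B)$ coincide up to finitely many strings. This uses the graded degree function to determine which relations become monomial in the associated graded algebra. This proves (1.1) and the equivalence of (1.2.1) with (1.2.3). The equivalences (1.2.1)$\Leftrightarrow$(1.2.2) and (1.2.3)$\Leftrightarrow$(1.2.4) follow as for $\gr$ of ${\bf BGA}$s: for these special biserial algebras, bands are either finitely many primitive ones or there exist two bands allowing concatenation, which yields nonpolynomial growth.

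\textbf{Step 4: part (2).} If $\Ga/\ang{\nu}$ lies outside the four cases, then $A_\Ga$, and hence $B$ by \cite[Theorem 2.35]{LL3}, is nondomestic. I then need to show that $\gr$ does not restore domesticity. The approach is to exhibit two bands in $\gr(B)$ that can be combined into infinitely many, which is possible because grading only kills relations and so only enlarges the string combinatorics. Combined with the result of \cite{GL,GLY}, this shows $\gr(B)$ is not of polynomial growth, and the covering transfers this to $\gr(A)$.

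\textbf{Expected main obstacle.} The hardest step should be Step 3 together with the covering in Step 2. One difficulty is verifying that the leading-term relations are $\nu$-equivariant. The other is handling the ${\bf mBT}$ cases, where $\Ga/\ang{\nu}$ is not a genuine Brauer graph and the correspondence with $R_\Ga$ must be checked by hand on local configurations.
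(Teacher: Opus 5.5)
\textbf{There is a genuine gap: your covering argument works only in cases (i) and (iii), and there it coincides with the paper's.}

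\textbf{Cases (i) and (iii).} Here $\ang{\nu}$ is admissible and $\Ga/\ang{\nu}=R_\Ga$ is a genuine ${\bf BG}$. The covering $\Ga\to\Ga/\ang{\nu}$ gives a Galois covering $(Q_A,I_A)\to(Q_B,I_B)$, your Step 3 is vacuous, and your Steps 1--2 are the paper's argument.

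\textbf{Cases (ii) and (iv).} Here $\Ga/\ang{\nu}$ has an isolated half-edge. So some $g=\nu^k\neq 1$ sends a half-edge $h$ to the other half-edge $\iota(h)$ of its edge. Since $g$ commutes with $\iota$, it maps that edge to itself, i.e.\ it fixes a vertex of $Q_A$. Already in Example \ref{Example-BT}(1), $\nu$ fixes both vertices and swaps $\alpha_1\leftrightarrow\alpha_3$, $\alpha_2\leftrightarrow\alpha_4$.

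Consequently:
\begin{enumerate}
\item $\ang{\nu}$ does not act freely on $Q_A$, so your claim of ``free action on vertices'' in Step 2 is false.
\item There is no Galois covering $A\to A/G$, and ``the algebra of the ${\bf mBG}$'' is not an orbit algebra to which your radical compatibility or band transfer applies.
\item Step 3 therefore compares an undefined object with $\gr(B)$ by an unspecified local check. Moreover, $R_\Ga$ is not a local modification of $\Ga/\ang{\nu}$: it is two copies of it glued along the isolated half-edges.
\item The band counts of $\gr(A)$ and $\gr(B)$ really do differ. In case (iv), $\gr(A)$ is $r$-domestic (Proposition \ref{domestic-(c)}) while $\gr(B)$ is $2$-domestic. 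So you need a functorial bridge that transports only emptiness and finiteness of band sets.
\end{enumerate}

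\textbf{The missing idea is a common Galois cover.} Build the ${\bf bFBG}$ $\Ga'$ from two copies of $\Ga$, letting each edge whose two half-edges lie in one $\ang{\nu}$-orbit cross between the copies (Lemma \ref{two-coverings}, Example \ref{Example-two-coverings}). Then $\Ga'\to\Ga$ and $\Ga'\to R_\Ga$ are both Galois coverings with admissible groups. Hence $C=A_{\Ga'}$ Galois-covers both $A$ and $B$ as quivers with relations (Theorem \ref{second-covering}). The band-transfer statement (surjective on band classes with finite fibres, Theorem \ref{induced-covering}) is then applied twice along $A\leftarrow C\to B$.

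That transfer also needs the induced covering of graded algebras to descend to the string algebras $\overline{\gr(\cdot)}$. This means a vertex must carry a non-uniserial projective-injective exactly when its image does. The paper gets this from the fact that push-down preserves projective-injectives (Appendix \ref{appendix-1}); your sketch skips this point.

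\textbf{Part (2).} This needs no covering at all. In your version, ``the covering transfers this to $\gr(A)$'' again presupposes a covering that does not exist in the non-admissible case. The relevant fact is $I_2\subseteq I_1$: $\overline{A}$ is a quotient of $\overline{\gr(A)}$, so every band of $\overline{A}$ is a band of $\overline{\gr(A)}$.

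Your phrase ``grading only kills relations'' is false for $A$ versus $\gr(A)$, since grading replaces $p_i-q_i$ by the new zero relation $q_i$. It is true only for the associated string algebras.

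Outside the four cases, $A$ is nondomestic by Theorems \ref{Thm-finite-f-BG} and \ref{Thm-domestic-f-BG}, and $B$ is nondomestic by \cite[Theorem 2.35]{LL3}. So $\gr(A)$ and $\gr(B)$ are nondomestic directly, without \cite{GL,GLY}.

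\textbf{Domestic versus polynomial growth.} For $\gr(A)$ this equivalence cannot be quoted from the ${\bf BGA}$ case. Your concatenation idea is the right one. To make it a proof you need the lower bound that $\mu_A(d)$ is at least the number of inequivalent cycles of length $d$. The paper proves this for arbitrary special biserial algebras in Appendix \ref{appendix}, via a constructibility argument.
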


The main tools in proving above theorem are the covering theory of ${\bf bFBGA}$s developed in \cite{LL2} together with the general results in Appendixes \ref{appendix} and \ref{appendix-1}. In above presentation, the quotient $\Gamma/\langle\nu\rangle$ (see the description after Definition \ref{def-bg-with-orbifolds}) plays a crucial role. In general, for a ${\bf bFBG}$ $\Gamma$, $\Ga/\ang{\nu}$ is an ${\bf mBG}$. When the corresponding ${\bf bFBGA}$ $A_{\Gamma}$ is representation-finite or representation-infinite domestic, $\Ga/\ang{\nu}$ is either a ${\bf BG}$ or an ${\bf mBT}$ (cf. Theorems \ref{Thm-finite-f-BG} and \ref{Thm-domestic-f-BG} below).

Secondly, we define the graded degree and the unbalanced edge for an ${\bf mBT}$ as we do for a ${\bf BG}$ in \cite{GL} and \cite{GLY}. Using these notions we describe the representation type of graded ${\bf bFBGA}$ $\gr(A_{\Gamma})$ in terms of the quotient $\Ga/\ang{\nu}$ directly, see Theorem \ref{thm}. This is obtained by using Theorem \ref{main-result} and by investigating the reduced form $R_{\Ga}$ of $\Ga$.

Finally, we explicitly describe the defining ${\bf bFBG}$ $\Gamma$ of representation-finite or representation-infinite domestic graded ${\bf bFBGA}$ $\gr(A_{\Gamma})$. These results are given along Propositions \ref{rep-finite-(a)}-\ref{domestic-(e)} without proofs. The main idea is to construct the defining ${\bf bFBG}$ $\Ga$ using the quotient $\Ga/\ang{\nu}$ and the order of the Nakayama automorphism $\nu$ of $\Ga$. However, we should mention that unlike $n$-domestic ${\bf BGA}$, an $n$-domestic ${\bf bFBGA}$ can have arbitrarily large $n$.

This paper is organized as follows.

In Section \ref{sec-preliminaries}, we first recall the definitions of special biserial algebras and string algebras, and the notions of strings and bands. Then we recall the definitions of ${\bf bFBGA}$s and give some examples.

In Section \ref{sec-graded-alg-bFBGA}, we describe the associated graded algebras of ${\bf bFBGA}$s in terms of quivers with relations, and generalize some results about the associated graded algebras of ${\bf BGA}$s in \cite{GL} to the associated graded algebras of ${\bf bFBGA}$s. We also describe the associated string algebras of ${\bf bFBGA}$s and the associated string algebras of graded ${\bf bFBGA}$s in terms of quivers with relations.

In Section \ref{sec-rep-type-bFBGA}, we recall and reinterpret the description on representation type of ${\bf bFBGA}$s in \cite{LL3}. We will describe the finite representation type or the domestic type of a ${\bf bFBGA}$ $A_{\Gamma}$ by the quotient $\Ga/\ang{\nu}$. In particular, in these cases we show that $\Ga/\ang{\nu}$ is either a ${\bf BG}$ or an ${\bf mBT}$.

In Section \ref{sec-rep-type-graded-bFBGA}, we show that the associated graded algebra of a ${\bf bFBGA}$ has the same representation type as the associated graded algebra of the corresponding ${\bf BGA}$ by using covering of ${\bf bFBG}$s and covering of associated ${\bf bFBGA}$s together with results in Appendixes \ref{appendix} and \ref{appendix-1}.

In Section \ref{sec-construct}, we describe the defining ${\bf bFBG}$ of a ${\bf bFBGA}$ whose associated graded algebra is representation-finite or representation-infinite domestic, and we also determine the number $n$ if the associated graded algebra is $n$-domestic in most cases.

In Appendixes \ref{appendix} and \ref{appendix-1}, we prove that a special biserial algebra is domestic if and only if it is of polynomial growth, and show that the push-down functor of a Galois covering functor between two locally bounded categories preserves projective-injective modules. On the one hand, these results will be essentially applied in our proofs, on the other hand, these results should have independent interests.

\section*{Data availability} The datasets generated during the current study are available from the corresponding author on reasonable request.

\section{Preliminaries}\label{sec-preliminaries}

Throughout this paper all algebras will be finite-dimensional with $1$ over an algebraically closed field $\bk$.

Given an algebra $A$, we denote by $A$-mod the category of finite-dimensional left $A$-modules. For a module $M$ in $A$-mod, we denote by $\soc(M)$ and $\rad(M)$ the socle and the radical of $M$, respectively. The length of the module $M$ is denoted by $\ell(M)$ and it means the number of composition factors in any composition series of $M$.
We write a path $p$ in a quiver from right to left and denote by $s(p)$ and $t(p)$ the start and the end of $p$, respectively. We define the length of a path in a quiver in an obvious way.

In this section, we first recall special biserial algebras and string algebras, then we introduce biserial fractional Brauer graph algebras.

\subsection{Special biserial algebras and string algebras}
\

These algebras are defined by quivers with relations. For more details on these algebras, we refer to \rm\cite{E}. For a quiver $Q$, we denote by $Q_0$ and $Q_1$ its vertex set and arrow set respectively. As usual, the trivial path at a vertex $i$ is denoted by $e_i$.

\begin{Def}  \label{special-biserial-algebra} A finite-dimensional $\bk$-algebra $A$ is called special biserial if there is a
	quiver $Q$ and an admissible ideal $I$ in $\bk Q$ such that $A$ is Morita equivalent to $\bk Q/I$ and
	such that $\bk Q/I$ satisfies the following conditions:\\
	$(1)$ At every vertex $v$ in $Q$ there are at most two arrows starting at $v$ and there are at most
	two arrows ending at $v$.\\
	$(2)$ For every arrow $\alpha$ in $Q$, there exists at most one arrow $\beta$ such that $\beta\alpha\notin I$ and there
	exists at most one arrow $\gamma$ such that $\alpha\gamma\notin I$.
	
	A special biserial algebra $A$ is called a string algebra if the defining ideal $I$ is generated by paths.
\end{Def}

Given a special biserial algebra $A=\bk Q/I$, we can associate a string algebra $\bar{A}$ as follows. Set
$$\mathbb{L}:=\{i\in Q_{0}\mid Ae_i\mbox{ is an injective and not uniserial module}\},$$
$$S_{0}:=\bigoplus\limits_{i\in \mathbb{L}}\soc(Ae_i),$$
where $Ae_i$ denotes the indecomposable projective $A$-module at vertex $i$. Then $S_{0}$ is an ideal of $A$ and the quotient algebra $\bar{A}=A/S_{0}$ is a string algebra. Note that  the operation $\overline{(\cdot)}$ preserves representation type 
and we can reconstruct the Auslander-Reiten quiver of $A$ from the  Auslander-Reiten quiver of $\overline{A}$ easily  (cf. \cite[Section II.1.3]{E}).

Suppose now that $A=\bk Q/I$ is a string algebra. For an arrow $\beta\in Q_{1}$, we denote by $\beta^{-1}$ the formal inverse of $\beta$ and set $s(\beta^{-1})=t(\beta)$, $t(\beta^{-1})=s(\beta)$, $(\beta^{-1})^{-1}=\beta$. For convenience, the formal inverse of an arrow will be called an inverse arrow. A word of length $n$ is defined by a sequence $c_{n}\ldots c_{2}c_{1}$, where $c_{i}\in Q_{1}$ or $c_{i}^{-1}\in Q_{1}$, and where $t(c_{i})=s(c_{i+1})$ for $1\leq i\leq n-1$. We define
$$s(c_{n}\ldots c_{2}c_{1})=s(c_{1}),~t(c_{n}\ldots c_{2}c_{1})=t(c_{n}),$$
and
$$(c_{n}\ldots c_{2}c_{1})^{-1}=c_{1}^{-1}c_{2}^{-1}\ldots c_{n}^{-1}.$$
For every vertex $v$ in $Q$, there is an empty word $1_{v}$ of length $0$ such that $t(1_{v})=s(1_{v})=v$ and $1_{v}^{-1}=1_{v}$.
A word $C$ is called closed if $s(C)=t(C)$. For a closed word $C=c_{n}\ldots c_{2}c_{1}$,
 we define a rotation of $C$ as a word of the form $c_{i}\ldots c_{1}c_{n}\ldots c_{i+1}$. The product of two words is defined by placing them next to each other, provided that the resulting sequence is a word.

A word  $C$ is called a string provided either $C=1_{v}$ for some vertex $v$ in $Q$ or $C=c_{n}\ldots c_{2}c_{1}$ satisfying $c_{i+1}\neq c_{i}^{-1}$ for $1\leq i\leq n-1$, and no subword (or its inverse) of $C$ belongs to the ideal $I$. We say that a string $C=c_{n}\ldots c_{2}c_{1}$ with $n\geq 1$ is directed if all $c_i$ are arrows, and $C$ is inverse if all $c_i$ are inverse arrows. A string $C$ of positive length is called a band if all powers of $C$ are strings and $C$ is not a power of a string of smaller length. Note that a band must contain both arrows and inverse arrows.

On the set of strings, we consider two equivalence relations.~Firstly, $\sim$ denotes the relation which identifies $C$ and $C^{-1}$; and secondly, we define $\sim_{A}$ to be the equivalence relation which identifies each word with its rotations and their inverses. Let $\mathrm{St}(A)$ (or simply $\mathrm{St}$) be a set of representatives of strings in $A$ under $\sim$, and let $\mathrm{Ba}(A)$ (or simply $\mathrm{Ba}$) be the set of representatives of bands under $\sim_{A}$.

\subsection{Biserial fractional Brauer graph algebras}\label{sec-bfbga}
\

In this subsection we recall the definition of biserial fractional Brauer graph algebras (see \cite{LL1, LL3}). Moreover, we extend the notion of graded degrees of vertices for Brauer graph to biserial fractional Brauer graph, which is directly related to algebraic structure of the corresponding biserial fractional Brauer graph algebra.

Let $\Ga$ be a graph with vertex set $V(\Ga)$ and edge set $E(\Ga)$. We denote by ${\bf val}(v)$ the valency or the ordinary degree of a vertex $v\in V(\Ga)$; it is defined to be the number of edges in $\Ga$ incident to $v$, with the convention that a loop is counted twice in ${\bf val}(v)$.

\begin{Def} \label{Def-f-BG}
	A biserial fractional Brauer graph ($abbr$. ${\bf bFBG}$) $\Ga=(V(\Ga),E(\Ga),\bm,\mathfrak{o})$ is a finite connected (undirected)
	 graph with vertex set $V(\Ga)$ and edge set $E(\Ga)$ together with a multiplicity function $\bm:V(\Ga)\rightarrow \mathbb{Q}_{>0}$ and an orientation $\mathfrak{o}$ of $\Ga$ (that is, for each vertex $v$, there is a cyclic ordering of the edges incident to $v$), such that
\begin{enumerate}[$(1)$]
\item for each vertex $v$ of $\Gamma$, $\bm(v){\bf val}(v)$ is a positive integer;
\item for each edge $v\defeqi w$, the edge rotated around $v$ from $i$ after $\bm(v){\bf val}(v)$ steps is equal to the edge rotated around $w$ from $i$ after $\bm(w){\bf val}(w)$ steps.
\end{enumerate}
\end{Def}

Note that ${\bf bFBG}$ is named as fractional Brauer graph algebras of type $MS$ (abbr. {\bf $f_{ms}$-BG}) in \cite{LL1} (see also Remark \ref{change-name}). 

\begin{Rem}
    \begin{enumerate}[$(1)$]
        \item We will always display a ${\bf bFBG}$ $\Ga$ as a graph with the edges incident to each vertex appearing in anticlockwise direction to reflect the ordering and  often leave out the symbol $\mathfrak{o}$.
        \item An edge $i\in E(\Ga)$ is said to be truncated at a vertex $v$ if $i$ is incident to $v$ such that $\bm(v)\val(v)=1$.
        \item Since an edge in a graph can be seen as a union of two half-edges, the orientation $\mathfrak{o}$ of $\Ga$ can be considered as cyclic orderings of half-edges incident to each vertex.
        \item The Nakayama automorphism of a ${\bf bFBG}$ $\Ga$, denote by $\nu$, is defined as the graph automorphism of $\Ga$ (see Section \ref{sec-rep-type-bFBGA} for explanation) such that each half-edge $h$ incident to vertex $v$ is mapped to the half-edge $h'$ rotated around $v$ from $h$ after $\bm(v){\bf val}(v)$ steps.
    \end{enumerate}
\end{Rem}

We give an example to illustrate the above notions.

\begin{Ex} \label{Example-equi-def}
	Let $\Gamma$ be a ${\bf bFBG}$ given by the following graph
	$$\xymatrix{
		a \ar@(ul,ur)@{-}[]^{1}
		\ar@(u,u)@{-}[r]^{3} \ar@{-}[r]^{2}
		& b
	}$$
	\noindent with $\bm(a)=\bm(b)=\frac{1}{2}$. Note that the cyclic ordering of the edges incident to vertex $a$ is given by $1<2<1<3<1$, to vertex $b$ is given by $2<3<2$. Moreover, the valency of vertex $a$ (resp. $b$) is $4$ (resp. $2$). Write each edge $i=(i',i'')$ where $i'$ and $i''$ denotes the half-edges from left to right, then the Nakayama automorphism $\nu$ of $\Gamma$ is given by: $1'\mapsto 1''\mapsto 1', 2'\mapsto 3'\mapsto 2', 2''\mapsto 3''\mapsto 2''$.
\end{Ex}

Recall that a {\it Brauer graph} (abbr. ${\bf BG}$) is a finite connected (undirected)
graph $\Ga=(V(\Ga),E(\Ga),\bm,\mathfrak{o})$, where $V(\Ga)$ is the vertex set, $E(\Ga)$ is the edge set, $\bm:V(\Ga)\rightarrow\mathbb{Z}_{>0}$ is the multiplicity function, and $\mathfrak{o}$ is the orientation of $\Ga$ (that is, for each vertex $v$, there is a cyclic ordering of the edges incident to $v$). We often leave out the symbol $\mathfrak{o}$. A ${\bf BG}$ is called {\it multiplicity-free} if the multiplicity of each vertex in the ${\bf BG}$ is $1$. Note that a ${\bf BG}$ is exactly a ${\bf bFBG}$ with integral multiplicity function.

A {\it Brauer tree} (abbr. ${\bf BT}$) is a ${\bf BG}$ $\Ga=(V(\Ga),E(\Ga),\bm)$ such that $(V(\Ga),E(\Ga))$ is a tree and $\bm(v)=1$ for all but at most one $v\in V(\Ga)$. In this case we always choose a specified vertex $v_{0}$ (if $\bm(v)>1$ then we choose $v_0=v$), called the exceptional vertex, whose multiplicity will be denoted by $\bm_{0}$.

Recall that any ${\bf bFBG}$ $\Gamma$ determines a finite-dimensional basic self-injective $\bk$-algebra $A_{\Gamma}=\bk Q_{\Gamma}/I_{\Gamma}$ called biserial fractional Brauer graph algebra (abbr. ${\bf bFBGA}$). In particular, $A_{\Gamma}$ is also called a Brauer graph algebra (abbr. ${\bf BGA}$) if $\Gamma$ is a ${\bf BG}$. Precisely, let $\Ga=(V(\Ga),E(\Ga),d)$ be a ${\bf bFBG}$, we will construct a quiver $Q_{\Gamma}$ and an admissible ideal $I_{\Gamma}$ in $\bk Q_{\Gamma}$ as follows.

The set $(Q_{\Gamma})_0$ of vertices is given by the set of edges $E(\Ga)$ of $\Ga$, denoting the vertex in $(Q_{\Gamma})_0$ corresponding to the edge $i$ in $E(\Ga)$ also by $i$. The set $(Q_{\Gamma})_1$ of arrows is given by the cyclic ordering in $\Ga$. If $\Ga$ is the ${\bf bFBG}$ given by $n$ edges with two vertices such that each edge is truncated at both of them (or if $\Ga$ is the ${\bf bFBG}$ given by $n$ loops with a single vertex such that each loop is truncated at it), suppose that the cyclic ordering at a vertex (or at the unique vertex, respectively) of $\Ga$ is given by $i_1<i_2<\cdots<i_n<i_1$ (or by $i_1<i_2<\cdots<i_n<i_1<i_2<\cdots<i_n<i_1$, respectively), then there is a unique arrow $i_k\rightarrow i_{k+1}$ in $(Q_{\Gamma})_1$ for $1\leq k\leq n$ and the ideal $I_{\Gamma}$ is generated by paths of length $2$, where $i_{n+1}=i_1$. The corresponding ${\bf bFBGA}$ is isomorphic to self-injective Nakayama algebra of Loewy length $2$. In the following discussion, we exclude the above cases. Suppose that the cyclic ordering at a vertex $v$ of $\Ga$ is given by $i_1<i_2<\cdots<i_n<i_1$; note that we might have $i_j=i_k$ for some $j\neq k$, if some of the edges are loops. We call
$i_{j+1}$  the successor of $i_j$ for $1\leq j\leq n-1$ and $i_1$ the successor of $i_n$. Note that if $v$ is a vertex incident to edge $i$ with $\val(v)=1$ and if $\bm(v)>1$ then
the successor of $i$ is $i$, if $\bm(v)=1$ then $i$ does not have a successor. If $i$ and $j$ are two edges in $E(\Ga)$ incident to a common vertex $v$ and such that $j$ is a successor of $i$ in the cyclic ordering of the edges at $v$ with $\bm(v)\val(v)>1$, then there is an arrow $i\rightarrow j$ in $(Q_{\Gamma})_1$.

Since every arrow of $Q_{\Gamma}$ starts and ends at an edge of $\Ga$, there are at most two arrows starting and ending at every vertex of $Q_{\Gamma}$. Every vertex $v\in V(\Ga)$ with $\bm(v)\val(v)>1$ and every edge $v\defeqi w$ incident to $v$, gives rise to a path $P_v$ of length $\bm(v)\val(v)$ starting at $i$  in $Q_{\Gamma}$.

We define the ideal $I_{\Gamma}$ in the path algebra $\bk Q_{\Gamma}$ generated by three types of relations. In the following we identify the set of edges $E(\Ga)$ of $\Ga$ with the set of vertices $(Q_{\Gamma})_0$ of the corresponding quiver $Q_{\Gamma}$.

Relation of the first type:
$$P_v-P_w,$$
for any edge  $v\defeqi w$ in $E(\Ga)$ with $\bm(v)\val(v)>1$ and $\bm(w)\val(w)>1$ (since the edge rotated around $v$ from $i$ after $\bm(v){\bf val}(v)$ steps is equal to the edge rotated around $w$ from $i$ after $\bm(w){\bf val}(w)$ steps, $P_v-P_w$ is a uniform relation).

Relation of the second type:
$$\alpha P_v$$
where  $t(\alpha)$ is a successor of $t(P_v)$ in the cyclic ordering of the edges at $v$ for any edge  $v\defeqi w$ in $E(\Ga)$ with $\bm(v)\val(v)>1$.

Relation of the third type:
$$\beta\alpha,$$
for any $\alpha,\beta\in (Q_{\Gamma})_1$ such that there does not exist a vertex $v$ in $\Ga$ satisfying  $\beta\alpha$ is a subpath of $P_v$.

We note that $I_{\Gamma}$ is an admissible ideal and the relations generating $I_{\Gamma}$ do not constitute a minimal set of relations.
Many of the relations, in particular many of the relations of the second type, are redundant.

\begin{Rem} \label{change-name}
In \cite{LL1}, ${\bf bFBGA}$ is named as fractional Brauer graph algebra of type $MS$ (abbr. {\bf $f_{ms}$-BGA}), since the more general class of fractional Brauer configuration algebra of type $MS$ (abbr. {\bf $f_{ms}$-BCA}) are special multiserial and self-injective. As the subclass consisting of {\bf $f_{ms}$-BGA}s are self-injective special biserial and proper generalization of ${\bf BGA}$s, we choose in the present paper the name ${\bf bFBGA}$ as suggested by an anonymous reviewer.
\end{Rem}

The following theorem collects some general properties on ${\bf bFBG}$s and ${\bf bFBGA}$s.

\begin{Thm}	[cf.{ \cite[Section 4 and Section 5]{LL1}}]
	\label{general-properties-of-BGA}
	\begin{enumerate}[$(1)$]
		\item Given a ${\bf bFBG}$ $\Gamma$, the associated ${\bf bFBGA}$ $A_{\Gamma}$ is finite-dimensional, basic, self-injective (where the Nakayama automorphism $\nu$ of $\Ga$ induces a Nakayama automorphism of the self-injective algebra $A_{\Ga}$), and indecomposable.
		\item  ${\bf bFBGA}$s are special biserial and of tame representation type.
		\item ${\bf bFBGA}$s are symmetric algebras if and only if they are ${\bf BGA}$s.
	\end{enumerate}
\end{Thm}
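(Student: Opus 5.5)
The statement is Theorem \ref{general-properties-of-BGA}, which collects three properties of $A_\Gamma$. I would verify them directly from the quiver $Q_\Gamma$ and the ideal $I_\Gamma$, using the combinatorics of the orientation $\mathfrak{o}$ and condition (2) of Definition \ref{Def-f-BG}. I set aside the two degenerate cases where $A_\Gamma$ is a self-injective Nakayama algebra of Loewy length $2$; there every claim is immediate.

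For part (1), the first step is to show that every nonzero path modulo $I_\Gamma$ is a subpath of some $P_v$. Relations of the third type kill every composition $\beta\alpha$ that does not lie inside a single cyclic rotation around one vertex. A path that goes around $v$ further than $P_v$ contains a subpath $\alpha P_v$ up to rotation, and this is killed by the second type relation. Since there are finitely many $P_v$, $A_\Gamma$ is finite-dimensional. It is basic because the vertices of $Q_\Gamma$ give pairwise non-isomorphic simples.

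Next I describe $A_\Gamma e_i$ for an edge $v\defeqi w$. It has a basis consisting of $e_i$, the proper initial subpaths of $P_v$ and of $P_w$ starting at $i$, and the common element $P_v=P_w$. Condition (2) guarantees that $P_v$ and $P_w$ end at the same edge $\nu(i)$, so the first type relation makes sense. The socle of $A_\Gamma e_i$ is therefore simple, spanned by $P_v$, and it is isomorphic to the top of $A_\Gamma e_{\nu(i)}$. Injectivity follows from the dual count: the right module $e_{\nu(i)}A_\Gamma$ has the same simple socle. So $\dim A_\Gamma e_i=\dim D(e_{\nu(i)}A_\Gamma)$, and the embedding $A_\Gamma e_i\hookrightarrow D(e_{\nu(i)}A_\Gamma)$ given by the socle is an isomorphism. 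Hence $A_\Gamma$ is self-injective with Nakayama permutation induced by $\nu$.

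The algebra automorphism sends the arrow $i\to j$ around $v$ to the arrow $\nu(i)\to\nu(j)$ around $v$. It is well defined because $\nu$ preserves the cyclic orderings: rotating by $\bm(v)\val(v)$ steps commutes with rotating by one step. One checks that the bilinear form "coefficient of $P_v$" then satisfies $(a,b)=(\nu(b),a)$. Indecomposability follows from connectivity of $\Gamma$, which makes $Q_\Gamma$ connected.

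For part (2), conditions (1) and (2) of Definition \ref{special-biserial-algebra} follow because each edge has two half-edges. Each half-edge gives at most one outgoing arrow and at most one incoming arrow. The third type relations kill every composition that switches vertices, so for each arrow there is at most one continuation that is nonzero. Tameness follows from the known tameness of special biserial algebras.

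For part (3), if $\Gamma$ is a ${\bf BG}$ then $\nu=\mathrm{id}$ and the form above is symmetric, so $A_\Gamma$ is symmetric. Conversely, assume $A_\Gamma$ is symmetric. Then the Nakayama permutation is trivial, so $\nu(i)=i$ on edges. I need to upgrade this to $\nu=\mathrm{id}$ on half-edges. Otherwise $\nu$ would swap the two ends of a loop, or the socle path would not return to its start, which contradicts $\soc(A_\Gamma e_i)\cong \mathrm{top}(A_\Gamma e_i)$. With $\nu$ trivial, rotation by $\bm(v)\val(v)$ steps is the identity at each $v$. So $\val(v)$ divides $\bm(v)\val(v)$, hence $\bm(v)\in\mathbb{Z}$ and $\Gamma$ is a ${\bf BG}$.

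I expect the main obstacle to be the loop case in the last step. Triviality of the permutation on edges does not by itself force triviality on half-edges, so I would analyze loops explicitly using the arrows they create.
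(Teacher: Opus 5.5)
Your treatment of (1) and (2) is a reasonable direct verification from $(Q_\Gamma,I_\Gamma)$; the paper itself gives no argument and only cites \cite{LL1}. The converse in (3), however, has a genuine gap. From symmetry you extract only that the Nakayama permutation is trivial, i.e.\ $\nu(i)=i$ for every edge. You then assert that a $\nu$ exchanging the two half-edges of a loop would contradict $\soc(A_\Gamma e_i)\cong\mathrm{top}(A_\Gamma e_i)$, or would make the socle path fail to return to its start. Neither is true, and the paper's own Example~\ref{Example-BT}$(1)$ refutes this step. There $\nu$ exchanges the two ends of each loop, yet $\soc(Ae_1)=\bk\alpha_2\alpha_1\cong S_1$ and $\soc(Ae_2)\cong S_2$, and the socle paths close up at their starting vertex. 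That algebra is weakly symmetric but not symmetric. So no argument depending only on the socles of the indecomposable projectives can force $\nu=\mathrm{id}$ on half-edges; you must use the symmetrizing form itself.

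The missing step goes as follows. Let $\epsilon$ be a symmetrizing form, and suppose $\nu$ exchanges the half-edges $h\neq h'$ of a loop $i$ at $v$, with $m=\bm(v)\val(v)\ge 2$. Write $P_v$ starting at $h$ as $\alpha_m\cdots\alpha_1$, so $\alpha_1$ leaves $i$ through $h$ and $\alpha_m$ enters $i$ through $h'=\nu(h)$. Since $\bk\,\alpha_m\cdots\alpha_1=\soc(A_\Gamma e_i)$ is a nonzero left ideal, $\epsilon(\alpha_m\cdots\alpha_1)\neq 0$. On the other hand, $\epsilon(\alpha_m\cdots\alpha_1)=\epsilon(\alpha_1\alpha_m\cdots\alpha_2)=0$, because $\alpha_1\alpha_m$ is a relation of the third type: the arrow following $\alpha_m$ in the rotation at $v$ is the one leaving $h'$, not $h$. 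This is exactly the computation in Example~\ref{Example-BT}$(1)$. Hence $\nu$ fixes the half-edges of loops. For non-loop edges this is automatic, since $\nu$ fixes vertices, and your divisibility argument then gives $\bm(v)\in\mathbb{Z}$.

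Non-degeneracy is really used here. For a single loop truncated at its vertex, $A_\Gamma\cong\bk[x]/(x^2)$ is symmetric although $\bm=\frac12$ and $\nu$ swaps the two half-edges. So in that excluded case (3) holds only in the sense that $A_\Gamma$ is isomorphic to a ${\bf BGA}$, and your remark that everything is ``immediate'' there needs this caveat.

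A smaller point in (1): no arrows arise at a vertex with $\bm(v)\val(v)=1$, so connectivity of $Q_\Gamma$ does not follow from that of $\Gamma$ alone. You need condition $(2)$ of Definition~\ref{Def-f-BG}, which joins two consecutive edges at such a vertex by the path $P_w$ around the other endpoint.
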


Since ${\bf bFBGA}$s are special biserial, they are biserial and the composition factors of the maximal uniserial submodules of the
indecomposable projective modules can be read from the defining ${\bf bFBG}$s. Let $\Ga$ be a ${\bf bFBG}$ and $A$ the associated ${\bf bFBGA}$. There is a one-to-one correspondence between the edges $i$ in $\Ga$ and the simple $A$-modules $S_i$ such that the projective cover $P_i$ of $S_i$ has the following structure. We have that $\rad(P_i)/\soc(P_i)$ is a direct sum of two (possibly zero) uniserial modules
$V_v$ and $V_w$ corresponding to the two vertices $v$ and $w$ incident to the edge $i$.  Suppose that $i$ is not truncated. Let $i,i_1,\cdots,i_{\val(v)-1}$ be the successor sequence for $i$ at $v$, and $i,j_1,\cdots,j_{\val(w)-1}$ the successor sequence for $i$ at $w$, using division with remainder, $\bm(v)\val(v)=q(v)\val(v)+r(v)$ and $\bm(w)\val(w)=q'(w)\val(w)+r'(w)$. Then $\soc(P_i)\cong S_{i_{r(v)}}=S_{j_{r'(w)}}$ where $i_0=j_0=i$, $V_v$ and $V_w$ have composition series
$$S_{i_1},\cdots,S_{i_{\val(v)-1}},S_i,S_{i_1},\cdots,S_{i_{\val(v)-1}},\cdots,S_i,S_{i_1},\cdots,S_{i_{r(v)-1}}$$
and
$$S_{j_1},\cdots,S_{j_{\val(w)-1}},S_i,S_{j_1},\cdots,S_{j_{\val(w)-1}},\cdots,S_i,S_{j_1},\cdots,S_{j_{r'(w)-1}}$$
respectively, such that, for $k=r(v)+1,\cdots,\val(v)-1$, the simple module $S_{i_k}$ occurs precisely $q(v)$ times, and, for $l=r'(w)+1,\cdots,\val(w)-1$, the simple module $S_{j_l}$ occurs precisely $q'(w)$ times.
In the case where $i$ is truncated, then $P_i$ is itself uniserial. Suppose that $i$ is not truncated at $v$ and is truncated at $w$. Then $P_i$ has composition series
$$S_i,S_{i_1},\cdots,S_{i_{\val(v)-1}},S_i,S_{i_1},\cdots,S_{i_{\val(v)-1}},\cdots,S_i,S_{i_1},\cdots,S_{i_{r(v)}}$$
 and for $k=r(v)+1,\cdots,\val(v)-1$, the simple module $S_{i_k}$ occurs precisely $q(v)$ times.

\begin{Ex}\label{Example-BT}
$(1)$ (see{ \cite[Example 2.37]{LL3}})	Let $\Ga$	be the following  ${\bf bFBG}$
\begin{center}
\begin{tikzpicture}
\draw[-] (0.45,0.125) arc (15:345:0.5);
\draw[-] (0.625,0.05) arc (-75:255:0.5);
\node at(0.5,0) {$a$};
\node at(-0.65,0) {$1$};
\node at(0.5,1.2) {$2$};
\end{tikzpicture}
\end{center}
with $\bm(a)=\frac{1}{2}.$  The cyclic ordering of the edges incident to vertex $a$ is given by $1<2<1<2        <1$. We have $\val(a)=4$.
The quiver $Q$ of the corresponding ${\bf bFBGA}$:

$$\xymatrix@R-1em{
1\ar@{->}@<2pt>[r]^{\alpha_1}\ar@/_1.7pc/@<-1pt>[r]_{\alpha_3}&2 \ar@{->}@<2pt>[l]^{\alpha_2}\ar@/_1.7pc/@<-1pt>[l]_{\alpha_4}
}
$$

Set of relations of the three types of the ${\bf bFBGA}$:

The first type: $\alpha_2\alpha_1-\alpha_4\alpha_3,\alpha_3\alpha_2-\alpha_1\alpha_4.$

The second type: $\alpha^3$ (By abuse of notation, we write $\alpha^3$ for $\alpha_3\alpha_2\alpha_1$, etc.)

The third type: $\alpha_4\alpha_1,\alpha_1\alpha_2,\alpha_2\alpha_3,\alpha_3\alpha_4.$

The regular representation of the ${\bf bFBGA}$ is as follows:

$$\begin{picture}(30.00,45.00)
	\unitlength=1.00mm \special{em:linewidth 0.4pt}
	\linethickness{0.4pt}
	\put(-21,15){$1$} \put(-17,8){$2$}
	\put(-25,8){$2$}\put(-21,0){$1$}
	\put(-13,8){$\oplus$}
	\put(-5,15){$2$} \put(-1,8){$1$}
	\put(-9,8){$1$}\put(-5,0){$2$}
\end{picture}$$

Note that the above ${\bf bFBGA}$ is a weakly symmetric algebra, not symmetric (if $A=A_{\Ga}$ is symmetric with a symmetric linear form $\epsilon$, then $\epsilon(\alpha_2\alpha_1)\neq 0$ since $\mathrm{soc}(Ae_1)=\bk\alpha_2\alpha_1$ is a nonzero left ideal of $A$, but $\epsilon(\alpha_1\alpha_2)=0$ since $\alpha_1\alpha_2=0$ in $A$, which contradicts our assumption that $\epsilon$ is symmetric).
\\
$(2)$ Let $\Ga$ be the  following ${\bf bFBG}$
\begin{center}
\begin{tikzpicture}[x=15pt,y=15pt,yscale=1,xscale=1]
\draw[-] (0.25,0.1) arc (-75:255:1);
\node at(0,0) {$a$};
\node at(4,0) {$b$};
\node at(6,3.464) {$c$};
\node at(6,-3.464) {$e$};
\node at(0,2.4) {$1$};
\node at(2,1.9) {$2$};
\node at(2,-1.9) {$3$};
\node at(3.4,2.75) {$4$};
\node at(3.4,-2.75) {$5$};
\node at(6.6,0.72) {$6$};
\node at(6.6,-0.72) {$7$};
\draw    (0,0.3) .. controls (0,2) and (4,2) .. (4,0.3) ;
\draw    (0,-0.3) .. controls (0,-2) and (4,-2) .. (4,-0.3) ;
\draw    (3.74,0.15) .. controls (2.268,1) and (4.268,4.464) .. (5.74,3.614) ;
\draw    (3.74,-0.15) .. controls (2.268,-1) and (4.268,-4.464) .. (5.73,-3.614) ;
\draw    (4.26,-0.15) .. controls (5.732,-1) and (7.732,2.464) .. (6.26,3.314) ;
\draw    (4.26,0.15) .. controls (5.732,1) and (7.732,-2.464) .. (6.26,-3.314) ;
\end{tikzpicture}
\end{center}
with $\bm(a)=\bm(b)=\bm(c)=\bm(e)=\frac{1}{2}$. The cyclic ordering of the edges incident to vertex $a$ is given by $1<2<1<3<1$, to vertex $b$ is given by $2<4<5<3<6<7<2$, to vertex $c$ is given by $4<6<4$, to vertex $e$ is given by $5<7<5$. We have $\val(a)=4$, $\val(b)=6$, $\val(c)=\val(e)=2$. Note that the edges $4,6$ (resp. $5,7$) are truncated at the vertex $c$ (resp. $e$).

The quiver $Q$ of the corresponding ${\bf bFBGA}$:

$$\xymatrix@R-1em{
	5\ar@{->}[d]_{\beta_3}& & 4\ar@{->}[ll]_{\beta_2}\\
	3\ar@{->}@<-2pt>[r]_{\alpha_4}\ar@{->}[d]_{\beta_4}	&1\ar@{->}@<2pt>[r]^{\alpha_1}\ar@{->}@<-2pt>[l]_{\alpha_3}&2 \ar@{->}@<2pt>[l]^{\alpha_2}\ar@{->}[u]_{\beta_1}\\
	6\ar@{->}[rr]_{\beta_5} & & 7\ar@{->}[u]_{\beta_6}
}
$$

Set of relations of the three types of the ${\bf bFBGA}$:

The first type: $\alpha_2\alpha_1-\alpha_4\alpha_3,\alpha_3\alpha_2-\beta_3\beta_2\beta_1,\alpha_1\alpha_4-\beta_6\beta_5\beta_4.$

The second type: $\alpha^3,\beta^4.$ (By abuse of notation, we write $\alpha^3$ for $\alpha_3\alpha_2\alpha_1$, etc.)

The third type: $\beta_1\alpha_1,\alpha_1\alpha_2,\beta_4\alpha_3,\alpha_3\alpha_4,\alpha_4\beta_3, \alpha_2\beta_6.$

The regular representation of the ${\bf bFBGA}$ is as follows:

$$\begin{picture}(30.00,45.00)
	\unitlength=1.00mm \special{em:linewidth 0.4pt}
	\linethickness{0.4pt}
		\put(-21,15){$1$} \put(-17,8){$3$}
	\put(-25,8){$2$}\put(-21,0){$1$}
	\put(-13,8){$\oplus$}
	\put(-5,15){$2$} \put(-1,10){$4$}
\put(-9,8){$1$}\put(-1,5){$5$}\put(-5,0){$3$}
\put(3,8){$\oplus$} \put(11,15){$3$} \put(15,10){$6$} \put(15,5){$7$}\put(11,0){$2$}
\put(7,8){$1$}
\put(19,8){$\oplus$} \put(23,15){$4$} \put(23,10){$5$} \put(23,5){$3$} \put(23,0){$6$}
\put(27,8){$\oplus$} \put(31,15){$5$} \put(31,10){$3$} \put(31,5){$6$} \put(31,0){$7$}
\put(35,8){$\oplus$}
\put(39,15){$6$} \put(39,10){$7$} \put(39,5){$2$} \put(39,0){$4$}
\put(43,8){$\oplus$}
\put(47,15){$7$} \put(47,10){$2$} \put(47,5){$4$} \put(47,0){$5$}
\end{picture}$$
\\
$(3)$ Let $\Ga$ be the following ${\bf bFBG}$

$$\xymatrix{
	a
	\ar@(u,u)@{-}[r]^{1}	\ar@{-}[r]^{2}
	& b \ar@{-}[r]^{3}	\ar@(ul,u)@{-}[r]^{4} &c}
$$

\noindent with $\bm(a)=\frac{3}{2}, \bm(b)=\bm(c)=\frac{1}{2}$. The cyclic ordering of the edges incident to vertex $a$ is given by $1<2<1$, to vertex $b$ is given by $1<4<2<3<1$, to vertex $c$ is given by $3<4<3$. We have $\val(a)=\val(c)=2$, $\val(b)=4$. Note that the edges $3,4$ are truncated at the vertex $c$.

The quiver $Q$ of the corresponding ${\bf bFBGA}$:

$$\xymatrix@R-1em{
4\ar@{->}[dr]_{\beta_2} &	1\ar@{->}@<2pt>[d]^{\alpha_1}\ar@{->}[l]_{\beta_1} &3 \ar@{->}[l]_{\beta_4} \\
& 	2\ar@{->}@<2pt>[u]^{\alpha_2}\ar@{->}[ur]_{\beta_3}  &
}
$$

Set of relations of the three types of the ${\bf bFBGA}$:

The first type: $\alpha_1\alpha_2\alpha_1-\beta_2\beta_1, \alpha_2\alpha_1\alpha_2-\beta_4\beta_3.$

The second type: $\alpha^4,\beta^3.$ (By abuse of notation, we write $\alpha^4$  for $\alpha_2\alpha_1\alpha_2\alpha_1$, etc.)

The third type: $\beta_3\alpha_1,\beta_1\alpha_2, \alpha_2\beta_2,\alpha_1\beta_4.$

The regular representation of the ${\bf bFBGA}$ is as follows:

$$\begin{picture}(30.00,40.00)
	\unitlength=1.00mm \special{em:linewidth 0.4pt}
	\linethickness{0.4pt}
	\put(-10,15){$1$} \put(-6,8){$4$}
	\put(-14,10){$2$}\put(-14,5){$1$}\put(-10,0){$2$}
	\put(-2,8){$\oplus$} \put(6,15){$2$} \put(2,10){$1$} \put(2,5){$2$}\put(6,0){$1$}
	\put(10,8){$3$}
	\put(14,8){$\oplus$} \put(18,15){$3$} \put(18,8){$1$} \put(18,0){$4$}
	\put(22,8){$\oplus$} \put(26,15){$4$} \put(26,8){$2$} \put(26,0){$3$}
\end{picture}$$
\end{Ex}

We can generalize the notion of graded degree in ${\bf BG}$ to ${\bf bFBG}$.

\begin{Def}\label{graded-degree}
	Let $\Ga$ be a ${\bf bFBG}$. For each vertex $v$ in $\Ga$, we denote by $\bm(v)$ and $\val(v)$ the multiplicity and the valency of $v$, respectively. Moreover, if $\bm(v)\val(v)=1$, we denote by $v'$ the unique vertex adjacent to $v$. For each vertex $v$ in $\Ga$, we define the graded degree $\grd(v)$ as follows.
	$$	\grd(v)=\begin{cases}
		\bm(v)\val(v), &    \mbox{if }\bm(v)\val(v)>1,\\
		\bm(v')\val(v'), &   \mbox{if }\bm(v)\val(v)=1.
	\end{cases}$$
\end{Def}

\begin{Rem}  The above definition of graded degree of $\Ga$ coincides with the definition \cite[Definition 2.4]{GL} when $\Ga$ is a ${\bf BG}$.
\end{Rem}

\begin{Ex} \label{Example-graded-degree}
	The graded degrees of vertices of the three ${\bf bFBG}$s in Example \ref{Example-BT} are as follows.
\begin{enumerate}[$(1)$]
	\item $\grd(a)=\bm(a)\val(a)=2.$
	\item $\grd(a)=\bm(a)\val(a)=2,~\grd(b)=\bm(b)\val(b)=3, ~\grd(c)=\bm(b)\val(b)=3~\text{and}~\grd(e)=\bm(b)\val(b)=3~ \text{since}~\bm(c)\val(c)=\bm(e)\val(e)=1.$
	\item $\grd(a)=\bm(a)\val(a)=3,~\grd(b)=\bm(b)\val(b)=2,~\grd(c)=\bm(b)\val(b)=2.$
\end{enumerate}		
\end{Ex}

Let $\Ga=(V(\Ga),E(\Ga),\bm)$ be a ${\bf bFBG}$ and $A=\bk Q/I$ the corresponding ${\bf bFBGA}$. For any edge $v\defeqi w$ in $\Ga$, we assume that   the cyclic ordering at vertex $v$ (resp. $w$) will end at an edge $j$ after $\bm(v)\val(v)$ (resp. $\bm(w)\val(w)$) steps from $i$ (note that if the edge $i$ is a loop, then the vertices $v$ and $w$ are the same). We denote by $S_i$ and $S_j$ the corresponding simple $A$-modules, respectively. Suppose that the projective cover $P_i$ of $S_i$ satisfies $\rad(P_i)=U_{v}+U_{w}$ with $\ell(U_{v})\leq\ell(U_{w})$, where $U_v$ and $U_w$ are two uniserial modules with $U_v\cap U_w\cong S_j$. The following lemma shows that the graded degrees are directly related to algebraic structure of the corresponding ${\bf bFBGA}$.

\begin{Lem}
	If $U_{v}\cong S_j$ (that is, when $P_i$ is uniserial), then $\grd(v)=\grd(w)=\ell(U_{w})$; if $U_{v}\ncong S_j$, then $\grd(v)=\ell(U_{v})$ and $\grd(w)=\ell(U_{w})$.
\end{Lem}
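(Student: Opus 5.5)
We read everything off the explicit description of the indecomposable projective $P_i$ given after Theorem \ref{general-properties-of-BGA}. The key observation is that the uniserial module $U_v$ corresponds to the path $P_v$ of length $\bm(v)\val(v)$ starting at $i$ in $Q_\Gamma$. More precisely, $U_v$ is generated by the arrow $i\to i_1$ and has radical series indexed by the vertices $i_1,i_2,\dots$ visited by $P_v$, ending with the common socle $S_j$. Counting, the number of composition factors of $U_v$ equals the number of arrows in $P_v$, namely $\bm(v)\val(v)$. This holds whenever $\bm(v)\val(v)>1$, so that $P_v$ exists and $i$ is not truncated at $v$. The same applies at $w$. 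We fix this dictionary first: for a non-truncated end, $\ell(U_v)=\bm(v)\val(v)$. This is obtained by summing the multiplicities in the composition series $S_{i_1},\dots$ listed before Example \ref{Example-BT}, which has $q(v)\val(v)+r(v)-1$ terms in $V_v$, plus the socle $S_j$.

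\textbf{Case $U_v\not\cong S_j$.} Here $U_v$ has length at least $2$, so its defining path has length $\bm(v)\val(v)\geq 2>1$. Hence $i$ is not truncated at $v$, and since $\ell(U_w)\geq\ell(U_v)\ge 2$, it is not truncated at $w$ either. Definition \ref{graded-degree} then gives $\grd(v)=\bm(v)\val(v)=\ell(U_v)$ and $\grd(w)=\bm(w)\val(w)=\ell(U_w)$ directly.

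\textbf{Case $U_v\cong S_j$.} Then $\rad(P_i)=U_w$ and $P_i$ is uniserial, so $i$ is truncated at one of its endpoints. Because $\ell(U_v)\le\ell(U_w)$ and the labelling puts the short side at $v$, the truncated endpoint is $v$, i.e.\ $\bm(v)\val(v)=1$. Then $\val(v)\bm(v)=1$, so no arrow leaves $i$ along $v$, and the only contribution to the radical is the path $P_w$, of length $\bm(w)\val(w)>1$. The exceptional Nakayama-type case, where both ends are truncated, was excluded in the construction.

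By the dictionary above, $\ell(U_w)=\bm(w)\val(w)=\grd(w)$. For $\grd(v)$, Definition \ref{graded-degree} gives $\grd(v)=\bm(v')\val(v')$, where $v'$ is the unique vertex adjacent to $v$. Since $\bm(v)\val(v)=1$ forces $\val(v)\cdot\bm(v)=1$, and $i$ is incident to $v$, we need $v'=w$. If $\val(v)=1$ this is immediate. If $\val(v)>1$, all edges at $v$ are truncated at $v$, and we must check that each of them connects to the same vertex $w$.

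The main obstacle is exactly this adjacency claim in the case $\val(v)>1$. We will derive it from condition (2) of Definition \ref{Def-f-BG}. Rotating $i$ by one step around $v$ gives the successor edge $i'$, and this must equal the rotation of $i$ by $\bm(w)\val(w)$ steps around $w$. Hence $i'$ is also incident to $w$, and iterating shows that every edge at $v$ joins $v$ to $w$, so $v'=w$ is well defined. Should the paper intend $v'$ to be read as ``the other endpoint of the edge in question'', this step becomes trivial. In either reading, $\grd(v)=\bm(w)\val(w)=\ell(U_w)=\grd(w)$.
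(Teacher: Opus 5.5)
Your proof is correct. The paper states this lemma without any proof and treats it as a direct reading of the structure of $P_i$ described after Theorem \ref{general-properties-of-BGA}, so there is nothing to compare against. Your argument supplies that verification: $\ell(U_v)=\bm(v)\val(v)$ at each endpoint, and this formula even holds at a truncated end, where $U_v=S_j$.

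You also use condition $(2)$ of Definition \ref{Def-f-BG} to show that when $\bm(v)\val(v)=1$ every edge at $v$ ends at $w$. Hence the vertex $v'$ in Definition \ref{graded-degree} is well defined and equals $w$. The paper's definition of $\grd$ silently presupposes this point, so your check fills a real gap in the exposition.
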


\section{The associated graded algebras of biserial fractional Brauer graph algebras}\label{sec-graded-alg-bFBGA}	

The notion of the graded algebra (denoted by $\mathrm{gr}(A)$) associated to a finite-dimensional algebra $A$  with the radical filtration of $A$ plays an important role in the representation theory. For the definition and elementary properties of $\mathrm{gr}(A)$, we refer to \cite[Subsection 2.2]{GL}. In this section, we generalize some results about the graded algebras associated with the radical filtration of ${\bf BGA}$s in \cite{GL} to the associated graded algebras of ${\bf bFBGA}$s. Then we give a description of string algebras associated with ${\bf bFBGA}$s and their associated graded algebras.

\subsection{The associated graded algebras of ${\bf bFBGA}$s}
\

Let $A$ be a ${\bf bFBGA}$ associated with a ${\bf bFBG}$ $\Ga=(V(\Ga),E(\Ga),\bm)$. Then we can assume that $A=\bk Q/I$ where $Q$ and $I$ are described in Subsection \ref{sec-bfbga}. The following lemma is an easy generalization of \cite[Lemma 2.9]{GL}.

\begin{Lem}\label{quiver-and-relations}
	Let $A=\bk Q/I$ be a ${\bf bFBGA}$ where the quiver $Q$ and the admissible ideal $I$ are described in Subsection \ref{sec-bfbga}. The generating relations of the second and the third types in $I$ are given by paths, and relation of the first type is of the form $\rho=p-q$, where $p$ and $q$ are two paths with $s(p)=s(q)$ and $t(p)=t(q)$. For any relation $\rho=p-q$ of first type (suppose that the length of $p$ is $m$ and the length of $q$ is $n$), we replace it by
	$$\rho'=\begin{cases}
		\rho, & m=n,\\
		q, & m>n,\\
		p, & m<n.
	\end{cases}$$
	Then the associated graded algebra $\gr(A)$ is isomorphic to $\bk Q/I{'}$, where $Q$ is the same quiver as above and $I'$ is an admissible ideal whose generating relations are obtained from that of $I$ by replacing each $\rho$ with $\rho'$.
\end{Lem}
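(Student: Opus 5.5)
We follow the argument of \cite[Lemma 2.9]{GL}, which only uses that the relations are of the three listed shapes and that, apart from the relations of the first type, everything is monomial. Write $J$ for the arrow ideal of $\bk Q$, so that $\rad^k A=(J^k+I)/I$ and $\gr(A)=\bigoplus_{k\ge 0}(J^k+I)/(J^{k+1}+I)$. Since $I$ is admissible, $\gr(A)$ is generated in degrees $0$ and $1$, and $\rad A/\rad^2A$ has basis given by the arrows. Hence there is a surjective graded algebra map $\pi:\bk Q\to\gr(A)$ which sends $e_i$ to its class in degree $0$ and each arrow to its class in degree $1$. Its kernel is the ideal generated by the lowest-degree homogeneous components of the elements of $I$. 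The claim therefore reduces to showing that $\ker\pi=I'$.

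First we check $I'\subseteq\ker\pi$. A path relation of the second or third type already lies in $I$ and is homogeneous, so it maps to $0$. For a first-type relation $\rho=p-q$ with $m=n$, $\rho$ is homogeneous of degree $m$ and lies in $I$, so $\pi(\rho)=0$. If $m>n$, then $q=p-\rho\equiv p \pmod I$, so the class of $q$ lies in $J^m+I\subseteq J^{n+1}+I$. Thus $q$ vanishes in degree $n$ of $\gr(A)$, and the case $m<n$ is symmetric.

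For the reverse inclusion we compare dimensions, since $\gr(A)$ and $A$ have the same dimension. The plan is to show $\dim \bk Q/I'\le\dim A$ by exhibiting a spanning set of $\bk Q/I'$ of size $\dim A$. Modulo $I'$, every path that is nonzero in $\bk Q/I'$ must avoid the third-type relations, so it is a subpath of some $P_v^{\,k}$-type rotation around a vertex. The second-type relations, together with the replaced first-type relations, then cut these paths off exactly where $A$ cuts them off.

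More precisely, we take the standard basis of $A$ read off from the description of $P_i$ in Subsection \ref{sec-bfbga}. It consists of $e_i$, the proper initial subpaths of $P_v$ and $P_w$ starting at $i$ (these give the uniserial parts $V_v,V_w$), and one socle element $P_v\equiv P_w$. Consider the case $\bm(v)\val(v)<\bm(w)\val(w)$. In $\bk Q/I'$ the longer path $P_w$ is zero because $\rho'=P_w$, and the shorter path $P_v$ survives and spans the socle. Any path extending $P_v$, or any nonmaximal continuation, is killed by second- or third-type relations. When the lengths are equal, $\rho'=\rho$ identifies the two paths as in $A$. For truncated edges only the monomial relations and the single $P_v$ are involved. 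So $\bk Q/I'$ is spanned by a set with the same cardinality as the basis of $A$, and the surjection $\bk Q/I'\to\gr(A)$ is an isomorphism.

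The main obstacle is this dimension count in the fractional setting. Because $\bm(v)\val(v)$ need not be a multiple of $\val(v)$, the path $P_v$ can wind partway around $v$, and loops can make the endpoints coincide. We must check that no path longer than $P_v$ survives modulo $I'$ when $P_w$ has been replaced by a monomial. The relations $\alpha P_v$ (second type) handle this. We will verify carefully that, for each $v$, all such $\alpha P_v$ still lie in $I'$ unchanged, which they do since the second-type relations are kept. Admissibility of $I'$ follows because $I'$ contains all paths of length greater than $\max_v \bm(v)\val(v)$ and is contained in $J^2$.
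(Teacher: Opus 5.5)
Your overall strategy is the standard argument behind \cite[Lemma 2.9]{GL}, which is all the paper appeals to here. It has three steps: the graded surjection $\pi:\bk Q\to\gr(A)$, whose kernel is spanned by lowest-degree components of elements of $I$; the check $I'\subseteq\ker\pi$; and then $\dim\bk Q/I'\le\dim A=\dim\gr(A)$ via a spanning set of paths. Your first inclusion and your admissibility check are correct.

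However, the dimension-count paragraph contains an incorrect statement. Suppose $m=\bm(v)\val(v)<n=\bm(w)\val(w)$. Then the replacement rule gives $\rho'=P_v$, so it is the \emph{shorter} path that lies in $I'$. The longer path $P_w$ survives and spans the socle of $(\bk Q/I')e_i$. Your own first step proves exactly this: when $m>n$ it is $q$ that lands in $J^{n+1}+I$. Example~\ref{example-associated-graded-algebra} shows the same thing, since $\beta_2\beta_1=0$ in $\gr(A)$ while $\alpha_1\alpha_2\alpha_1\neq 0$. The quotient you actually describe, with the longer path killed, is not one through which $\pi$ factors. Indeed, $P_w=P_v$ is a nonzero element of $\rad^{n}Ae_i$ and $\rad^{n+1}Ae_i=0$, so $\pi(P_w)\neq 0$.

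The repair is immediate. The module $(\bk Q/I')e_i$ is spanned by $e_i$, the proper initial subpaths of $P_v$ and of $P_w$ starting at $i$, and $P_w$ (with $P_v\equiv P_w$ when $m=n$). Every other path starting at $i$ falls into one of three cases:
\begin{itemize}
\item it contains a third-type subpath;
\item it runs around $v$ beyond length $m-1$, so it contains $P_v\in I'$;
\item it runs around $w$ beyond length $n$, so it contains a second-type relation $\alpha P_w$.
\end{itemize}
This spanning set still has $m+n=\dim Ae_i$ elements, and the truncated case is as you say. So the upper bound $\dim \bk Q/I'\le \dim A$, and hence the lemma, goes through once you correct which of the two maximal paths is killed.
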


\begin{Ex} \label{example-associated-graded-algebra}
According to Lemma \ref{quiver-and-relations}, we can describe the associated graded algebras of the three ${\bf bFBGA}$s in Example \ref{Example-BT}. Note that in Example \ref{Example-BT} $(1)$, $\gr(A)$ is isomorphic to $A$,  in Example \ref{Example-BT} $(2)$ and $(3)$, $\gr(A)$ are not self-injective algebras any more. For Example \ref{Example-BT} $(3)$, we have that $\gr(A)$ is given by the quiver $Q$
$$\xymatrix@R-1em{
	4\ar@{->}[dr]_{\beta_2} &	1\ar@{->}@<2pt>[d]^{\alpha_1}\ar@{->}[l]_{\beta_1} &3 \ar@{->}[l]_{\beta_4} \\
	& 	2\ar@{->}@<2pt>[u]^{\alpha_2}\ar@{->}[ur]_{\beta_3}  &
}
$$
\noindent and relations
$$\beta_2\beta_1, \beta_4\beta_3, \alpha^4,\beta^3, \beta_3\alpha_1,\beta_1\alpha_2, \alpha_2\beta_2,\alpha_1\beta_4.$$
	
	The regular representation of $\gr(A)$ is as follows:
	
$$\begin{picture}(30.00,40.00)
	\unitlength=1.00mm \special{em:linewidth 0.4pt}
	\linethickness{0.4pt}
	\put(-10,15){$1$} \put(-6,8){$4$}
	\put(-14,10){$2$}\put(-14,5){$1$}\put(-14,0){$2$}
	\put(-2,8){$\oplus$} \put(6,15){$2$} \put(2,10){$1$} \put(2,5){$2$}\put(2,0){$1$}
	\put(10,8){$3$}
	\put(14,8){$\oplus$} \put(18,15){$3$} \put(18,8){$1$} \put(18,0){$4$}
	\put(22,8){$\oplus$} \put(26,15){$4$} \put(26,8){$2$} \put(26,0){$3$}
\end{picture}$$
\end{Ex}

\begin{Def}[compare with {\cite[Definition 2.12]{GL}}]\label{unbalanced-edge} Let $\Ga=(V(\Ga),E(\Ga),\bm)$ be a ${\bf bFBG}$ with graded degree function $\grd$ and $A=\bk Q/I$ the corresponding ${\bf bFBGA}$. We identify $Q_0$ with $E(\Ga)$ by the natural bijection between them.
	\begin{enumerate}[$(1)$]
		\item We call an edge $v_{1}\defeqi v_{2}$ in $\Ga$ with $ \grd(v_{1})\neq \grd(v_{2})$ an unbalanced edge, and denote the endpoints of an unbalanced edge $i$ by $v^{(i)}_L$, $v^{(i)}_S$ with $\grd(v^{(i)}_L)>\grd(v^{(i)}_S)$, whenever the context is clear, we will omit the superscript of $v^{(i)}_L$ or $v^{(i)}_S$.
		\item For any unbalanced edge $v_{S}\defeqi v_{L}$ in $\Ga$, there is a generating relation $\rho_i:=p_i-q_i$ in the ideal $I$, where $p_i$ and $q_i$ are two paths 
		in $Q$ with $s(p_i)=s(q_i)=i,~t(p_i)=t(q_i)$.  Let $e_i$ be the primitive idempotent in $A$ corresponding to the vertex $i\in Q_0$. We define the following sets:
		\begin{equation}\label{equ-W}
			\mathbb{W}=\{i\in Q_{0}\vert \rad(A e_{i})/\soc(A e_{i})=V_1 \oplus V_2, V_1\neq0, V_2\neq0,\ell(V_1)\neq\ell(V_2)\}\subseteq Q_0,
		\end{equation}
		\begin{equation}\label{equ-P}
			\mathbb{P}=\bigcup\limits_{i\in \mathbb{W}}\{r_i|r_i\mbox{ is the longer path between }p_i\mbox{ and }q_i\}.
		\end{equation}
	\end{enumerate}
	
\end{Def}
In some cases, a ${\bf bFBGA}$ $A$ is isomorphic to its associated graded algebra $\gr(A)$. We would like to give a description for this situation in terms of the graded degrees as follows.

\begin{Prop} [compare with {\cite[Proposition 2.13]{GL}}]\label{isomorphism}
	Let $A$ be a ${\bf bFBGA}$ associated with a ${\bf bFBG}$ $\Ga$ and $\gr(A)$ the associated graded algebra of $A$. Then there is a natural bijection between the elements in $\mathbb{W}$ and the elements in $\mathbb{P}$, and both $\mathbb{W}$ and $\mathbb{P}$ are mapped bijectively to the set of unbalanced edges in $\Ga$. Moreover, the following statements are equivalent.
	
	\begin{enumerate}[$(1)$]
		\item $A$ is isomorphic to $\gr(A)$ as algebras.
		\item The vertices in the ${\bf bFBG}$ $\Ga$ have the same graded degree.
		\item $\mathbb{W}$ (resp. $\mathbb{P}$) is an empty set.
	\end{enumerate}	
	\end{Prop}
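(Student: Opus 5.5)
We will follow the proof of \cite[Proposition 2.13]{GL}, using the structure of the indecomposable projectives of $A$ described after Theorem \ref{general-properties-of-BGA} together with the preceding lemma relating graded degrees to lengths of uniserial submodules.

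\textbf{Step 1: the bijections.} Take $i\in\mathbb{W}$. Then $i$ is not truncated at either endpoint, since otherwise $Ae_i$ is uniserial and one of $V_1,V_2$ is zero. So there is exactly one relation of the first type starting at $i$, namely $\rho_i=P_v-P_w$ for the edge $v\defeqi w$. Its two paths have lengths $\bm(v)\val(v)=\grd(v)$ and $\bm(w)\val(w)=\grd(w)$. The two uniserial summands of $\rad(Ae_i)/\soc(Ae_i)$ have lengths $\grd(v)-1$ and $\grd(w)-1$, by the preceding lemma. Hence the condition $\ell(V_1)\neq\ell(V_2)$ is exactly the condition that $i$ is unbalanced. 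In that case the two paths have different lengths, and exactly one longer path $r_i$ is selected. This gives the map $\mathbb{W}\to\mathbb{P}$, $i\mapsto r_i$, which is injective because $s(r_i)=i$.

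It remains to see that every unbalanced edge lies in $\mathbb{W}$, so that $\mathbb{W}$ is in bijection with the set of unbalanced edges. For a non-truncated unbalanced edge $i$ this follows from the length computation above. For a truncated edge $v\defeqi w$ with $\bm(v)\val(v)=1$, Definition \ref{graded-degree} gives $\grd(v)=\grd(w)$, so such an edge is balanced. Hence the unbalanced edges are exactly the elements of $\mathbb{W}$.

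\textbf{Step 2: the equivalences.} The equivalence of (2) and (3) is immediate from Step 1, since the vertices of $\Ga$ all have the same graded degree if and only if $\Ga$ has no unbalanced edges.

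For (3)$\Rightarrow$(1): if $\mathbb{W}=\emptyset$, then every relation of the first type is homogeneous ($m=n$). By Lemma \ref{quiver-and-relations} we get $I'=I$, so $\gr(A)\cong \bk Q/I=A$.

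For (1)$\Rightarrow$(3): suppose $\mathbb{W}\neq\emptyset$. Since $A$ is self-injective and basic, every $Ae_j$ has simple socle. In $\gr(A)=\bk Q/I'$, however, the relation $\rho_i$ is replaced by the shorter path, and the longer path $r_i$ remains nonzero by the Loewy-length count. So for $i\in\mathbb{W}$ the projective $\gr(A)e_i$ has a socle containing two distinct nonzero elements: the image of $r_i$, and the longest nonzero path along the shorter side. Its socle is therefore not simple, and $\gr(A)$ is not self-injective. This is what happens in Example \ref{Example-BT}(3), as noted in Example \ref{example-associated-graded-algebra}. Hence $\gr(A)\not\cong A$.

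\textbf{Main obstacle.} The hardest step is this last one. It requires verifying that, in $\gr(A)$, the longest nonzero path on the shorter side really survives in the socle, and is not killed by relations of the second type or rewritten by other relations of the first type. I would check this using the explicit composition series of $Ae_i$ and the fact that the second-type relations only truncate paths beyond $P_v$.
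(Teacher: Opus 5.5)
Your proposal is correct and follows the route the paper intends, since the paper gives no separate proof and defers to the argument of \cite[Proposition 2.13]{GL}: the graded-degree lemma identifies $\mathbb{W}$ with the unbalanced edges (and hence $\mathbb{P}$ as well, via $i\mapsto r_i$); homogeneity of all first-type relations gives (3)$\Rightarrow$(1) through Lemma \ref{quiver-and-relations}; and the failure of $\gr(A)$ to be self-injective gives (1)$\Rightarrow$(3). The step you flag as the main obstacle closes in one line: if $u$ is the initial subpath of length $\grd(v_S)-1$ of the shorter path, then $\rad(A)u\subseteq\soc(Ae_i)\subseteq\rad^{\grd(v_L)}(A)\subseteq\rad^{\grd(v_S)+1}(A)$, so the class of $u$ in $\rad^{\grd(v_S)-1}(A)/\rad^{\grd(v_S)}(A)$ is a nonzero socle element of $\gr(A)e_i$ lying in a different degree from the class of $r_i$.
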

	
\subsection{String algebras associated with ${\bf bFBGA}$s and their associated graded algebras}\label{sec-string-alg-of-bFBGA}
\
	
	Since any ${\bf bFBGA}$ $A=\bk Q/I$ is special biserial, according to the description in Lemma \ref{quiver-and-relations}, the associated graded algebra $\gr(A)=\bk Q/I'$ is also special biserial. Thus we can reduce the study of $A$ and $\gr(A)$ to some string algebras $\overline{A}$ and $\overline {\gr(A)}$. The string algebra $\overline{A}$ is defined by
	\begin{equation}
	\overline{A}=A/\bigoplus\limits_{i\in \mathbb{L}}\soc(Ae_i),
	\end{equation}
	where $$\mathbb{L}=\{i\in Q_{0}|\rad(Ae_{i})/\soc(Ae_{i})=V_{i,1}\oplus V_{i,2}, V_{i,1}\neq 0, V_{i,2}\neq 0\}.$$
	Recall that for each $i\in \mathbb{L}$, there is a relation $\rho_i=p_i-q_i$ of the first type in $I$, where $p_i$ and $q_i$ are two paths in $Q$ starting at $i$ and ending at the same vertex (the length of $p_i$ is equal to $\ell(V_{i,1})+1$ and the length of $q_i$ is equal to $\ell(V_{i,2})+1$). Therefore $\overline{A}$ can be described by the same quiver $Q$ and an admissible ideal $I_1$ in $\bk Q$, where $I_1$ is generated by the ideal $I$ and new relations $\{p_i, q_i\mid i\in \mathbb{L}\}$. Similarly, the string algebra $\overline {\gr(A)}$ is defined by
	\begin{equation}\label{equ-string-graded}
	\overline {\gr(A)}=\gr(A)/\bigoplus\limits_{i\in \mathbb{L}'} \soc(\gr(A)e_{i}),
	\end{equation}
	where $$\mathbb{L}'=\{i\in \mathbb{L}|\ell(V_{i,1})=\ell(V_{i,2})\}=\mathbb{L}-\mathbb{W}$$ and $\mathbb{W}$ is defined in Equation (\ref{equ-W}).

Note that for each $i\in \mathbb{L}'$, there is a relation $\rho_i=p_i-q_i$ in $I'$, where $p_i$ and $q_i$ are two paths in $Q$ starting at $i$ and ending at the same vertex such that $p_i$ and $q_i$ have the same length. Therefore $\overline {\gr(A)}$ can be described by the same quiver $Q$ and an admissible ideal $I_2$ in $\bk Q$, where $I_2$ is generated by the ideal $I'$ and new relations $\{p_i, q_i\mid i\in \mathbb{L}'\}$.

By the definitions of $\overline{A}$ and $\overline{\gr(A)}$, we have that $\overline{A}$ is a quotient algebra of $\overline{\gr(A)}$, that is, $\overline{A}\cong\overline{\gr(A)}/I_3$, where the ideal $I_3$ in $\overline{\gr(A)}=\bk Q/I_2$ is the $\bk$-vector space with basis given by the images of the paths in the set $\mathbb{P}$ in $\bk Q/I_2$ (this set $\mathbb{P}$ is defined in Equation (\ref{equ-P})). Using notations from Subsection \ref{sec-bfbga}, we give displayed formulas of the ideals $I,I',I_1, I_2$ in $\bk Q$ and $I_3$ in $\bk Q/I_2$:
$$R_1:=\{\mbox{Relation of the first type in }I\},\quad I_0:=\langle \mbox{Relation of the second type or the third type in }I\rangle;$$
$$I=I_0+\langle R_1\rangle;$$
$$I'=I_0+\langle p_i-q_i\in R_1 \mid i\in \mathbb{L}'\rangle+\langle q_i\mid i\in \mathbb{W}, p_i-q_i\in R_1, q_i\mbox{ is shorter than }p_i\rangle;$$
$$I_1=I_0+\langle p_i,q_i\mid i\in \mathbb{L}, p_i-q_i\in R_1\rangle;$$
$$I_2=I_0+\langle p_i,q_i\mid i\in \mathbb{L}', p_i-q_i\in R_1\rangle+\langle q_i\mid i\in \mathbb{W}, p_i-q_i\in R_1, q_i\mbox{ is shorter than }p_i\rangle;$$
$$I_3=\langle r_i+I_2\mid r_i\in \mathbb{P}\rangle=k\mbox{-vector space with basis }\{r_i+I_2\mid r_i\in\mathbb{P}\}.$$

We remind the reader that the four concerned algebras have the same quiver and the following displayed formulas:
$$A=\bk Q/I,\quad \overline{A}=\bk Q/I_1,\quad \gr(A)=\bk Q/I',\quad \overline{\gr(A)}=\bk Q/I_2.$$


\section{The representation type of biserial fractional Brauer graph algebras}\label{sec-rep-type-bFBGA}

In this section, we recall some notions from \cite{LL3} and reinterpret some results on ${\bf bFBG}$s and ${\bf bFBGA}$s. For this purpose, it is necessary to use the notion of half-edges in a graph.

For two graphs $\Ga=(V(\Ga),E(\Ga))$ and $\Ga'=(V(\Ga)',E(\Ga)')$, denote by $H(\Ga)$ and $H(\Ga')$ the set of half-edges of $\Ga$ and $\Ga'$ respectively, denote by $v:H(\Ga)\rightarrow V(\Gamma)$ (resp. $v':H(\Ga')\rightarrow V(\Gamma')$) the map sending each half-edge in $\Ga$ (resp. $\Ga'$) to the vertex that it incidents to, and denote by $\iota:H(\Ga)\rightarrow H(\Ga)$ (resp. $\iota':H(\Ga')\rightarrow H(\Ga')$) the involution mapping each half-edge to the other half-edge belonging to the same edge. Recall that a {\it morphism of graphs} $f:\Ga\rightarrow\Ga'$ is a map $f:H(\Ga)\rightarrow H(\Ga')$ such that for each two half-edges $h_1$, $h_2$ in $H(\Ga)$, $v'(f(h_1))=v'(f(h_2))$ if $v(h_1)=v(h_2)$; and for each half-edge $h$ in $\Ga$, $\iota'(f(h))=f(\iota(h))$. Note that $f$ induces two maps $f_0:V(\Ga)\rightarrow V(\Ga')$ and $f_1:E(\Ga)\rightarrow E(\Ga')$ between the sets of vertices and edges of $\Gamma$ and $\Gamma$ respectively.

An {\it automorphism} $f$ of a graph $\Gamma$ is a morphism $f:\Ga\rightarrow\Ga$ such that $f$ is a permutation of $H(\Gamma)$. If $\Ga$ is a ${\bf bFBG}$, then the Nakayama automorphism of $\Ga$ is indeed an automorphism of $\Ga$ as a graph.

\begin{Rem}
The definition of morphism of graphs given as above form is inspired by morphism of fractional Brauer configurations defined in \cite{LL2}.
\end{Rem}

\begin{Def}[see {\cite[Definition 2.3]{AGH}}]\label{def-half-edged-graph}
A half-edged graph $\Ga=(V(\Ga),E(\Ga),O(\Ga))$ is a finite connected graph $\Ga=(V(\Ga),E(\Ga))$ (where $V(\Ga)$ and $E(\Ga)$ are the set of vertices and the set of edges respectively) together with a finite set $O(\Gamma)$, where each $h\in O(\Gamma)$ is a half-edge (called isolated half-edge) which does not belong to any edge in $E(\Ga)$ and which incidents to some vertex in $V(\Gamma)$. We call a half-edge of $\Ga$ non-isolated if it belongs to some edge in $E(\Ga)$.
\end{Def}

If $\Ga=(V(\Ga),E(\Ga))$ is a graph and $G$ is a group of automorphisms of $\Gamma$, then there exists a half-edged graph $\Gamma/G$ which is constructed as follows.

Denote by $G_0=\{f_0\mid f\in G\}$ the associated permutation group on the set $V(\Gamma)$. The vertex set $V(\Gamma/G)$ of $\Gamma/G$ is the set $V(\Ga)/G_0$ of $G_0$-orbits of $V(\Gamma)$. Denote by $\iota$ the involution of $H(\Gamma)$ mapping each half-edge $h$ to the unique different half-edge $\iota(h)$ belonging to the same edge. Denote by $H(\Gamma)/G$ the set of $G$-orbits of half-edges, and let $\overline{\iota}$ be the involution of $H(\Gamma)/G$ induced by $\iota$ (note that $\overline{\iota}$ may have fixed points). The edges of $\Gamma/G$ are given by $\overline{\iota}$-orbits of cardinality $2$, and the isolated half-edges of $\Gamma/G$ are given by $\overline{\iota}$-fixed points, where an edge $\{G\cdot h,\overline{\iota}(G\cdot h)\}$ is incident to vertices $G_0\cdot v(h)$ and $G_0\cdot v(\iota(h))$, and an isolated half-edge $G\cdot h$ is incident to the vertex $G_0\cdot v(h)$. We should mention that this quotient is consistent with the quotient of a Brauer $G$-set in \cite{LL3} and when we applied to a ${\bf bFBG}$ $\Gamma$ the quotient $\Gamma/G$ is indeed a modified Brauer graph, see \cite{LL1,LL2,LL3} and the discussion after Definition \ref{def-bg-with-orbifolds}.

\begin{Rem}\label{remark-half-edge}
We will consider a half-edge in a graph $\Ga=(V(\Ga),E(\Ga))$ as a pair $(e,v)$, where $e\in E(\Gamma)$ and $v\in V(\Gamma)$ such that $e$ is incident to $v$ (if $e$ is a loop incident to a vertex $v$, we denote the two half-edges associated to $e$ by $(e,v)$ and $(e,v)'$).
\end{Rem}

\begin{Ex}\label{ex-quotient-of-a-graph}
Let $\Gamma=(V(\Gamma),E(\Gamma))$ be the graph
$$\begin{tikzpicture}
\fill (-1,0) circle (0.5ex);
\fill (1,0) circle (0.5ex);
\fill (0,2) circle (0.5ex);
\draw (-1,0) -- (1,0);
\draw (-1,0) -- (0,2);
\draw (1,0) -- (0,2);
\node at(-1.25,0) {$v_1$};
\node at(0,2.2) {$v_2$};
\node at(1.3,0) {$v_3$};
\node at(-0.7,1) {$e_1$};
\node at(0.7,1) {$e_2$};
\node at(0,-0.2) {$e_3$};
\end{tikzpicture}$$
and let $f:H(\Gamma)\rightarrow H(\Gamma)$ and $f':H(\Gamma)\rightarrow H(\Gamma)$ be the automorphism of $\Gamma$ given by
$$f(e_1,v_1)=(e_1,v_2),~f(e_1,v_2)=(e_1,v_1),~f(e_2,v_2)=(e_3,v_1),$$
$$f(e_2,v_3)=(e_3,v_3),~f(e_3,v_1)=(e_2,v_2),~f(e_3,v_3)=(e_2,v_3)$$
and
$$f'(e_1,v_1)=(e_2,v_2),~f'(e_1,v_2)=(e_2,v_3),~f'(e_2,v_2)=(e_3,v_3),$$
$$f'(e_2,v_3)=(e_3,v_1),~f'(e_3,v_3)=(e_1,v_1),~f'(e_3,v_1)=(e_1,v_2)$$
respectively. Denote by $G$ (resp. $G'$) the group of automorphisms of $\Gamma$ generated by $f$ (resp. $f'$). Then $\Gamma/G$ is a half-edged graph given by the diagram
$$\begin{tikzpicture}
\fill (0,0) circle (0.5ex);
\fill (1,0) circle (0.5ex);
\draw (-0.5,0) -- (1,0);
\end{tikzpicture}$$
and $\Gamma/G'$ is the graph
$$\begin{tikzpicture}
\draw (0.5,0) circle (0.5);
\fill (0,0) circle (0.5ex);
\end{tikzpicture}.$$
\end{Ex}

The modified Brauer graph is introduced in \cite{LL3} using the language of Brauer $G$-set. Since we do not need the
general notion of Brauer $G$-set, in the present paper we define modified Brauer graph directly using half-edged graph as follows.

\begin{Def} [see {\cite[Definition 2.7]{LL3}}]\label{def-bg-with-orbifolds}
A modified Brauer graph (abbr. ${\bf mBG}$) $(V(\Ga),E(\Ga),O(\Ga),\bm,\mathfrak{o})$ is a half-edged graph $\Ga=(V(\Ga),E(\Ga),O(\Ga))$, where $V(\Ga)$ is the set of vertices, $E(\Ga)$ is the set of edges, $O(\Ga)$ is the set of isolated half-edges, together with a multiplicity function $\bm:V(\Ga)\rightarrow\mathbb{Z}_{>0}$ and an orientation $\mathfrak{o}$ of $\Ga$, that is, for each vertex $v$, a cyclic ordering of the half-edges (both isolated and non-isolated) incident to $v$.
\end{Def}

Note that an ${\bf mBG}$ is a special kind of Brauer $G$-set in \cite{LL3} and a ${\bf BG}$ $\Ga$ is exactly an ${\bf mBG}$ with empty $O(\Ga)$. Similar to ${\bf BG}$, we call an ${\bf mBG}$ $\Ga$ {\it multiplicity-free} if $\bm\equiv 1$.

For a ${\bf bFBG}$ $\Ga=(V(\Ga),E(\Ga),\bm,\mathfrak{o})$, denote by $\nu$ the Nakayama automorphism of $\Ga$ and by $\ang{\nu}$ the group of automorphisms generated by $\nu$. Since $\Ga$ is connected, the group $\ang{\nu}$ acts freely on $\Gamma$ (see \cite[Lemma 3.6]{LL3}), that is, for each $f\in\ang{\nu}$ and each $h\in H(\Gamma)$, $f(h)=h$ implies that $f$ is the identity map. Therefore the cardinality of each $\ang{\nu}$-orbit is equal to the cardinality $|\ang{\nu}|$ of $\ang{\nu}$. Consider the quotient $\Gamma/\ang{\nu}$, which is a half-edged graph. Since for each half-edge $h$ of $\Gamma$, $h$ and $\nu(h)$ are incident to the same vertex, the permutation group $\ang{\nu}_0$ of $V(\Gamma)$ is trivial. Then the vertices of $\Gamma/\ang{\nu}$ are in one-to-one correspondence with the vertices of $\Ga$. The quotient $\Gamma/\ang{\nu}$ becomes an ${\bf mBG}$, where the multiplicity of the vertex $v\in V(\Ga)=V(\Ga/\ang{\nu})$ is defined as $\bm(v)|\ang{\nu}|$ (note that $\bm(v)|\ang{\nu}|=\frac{\bm(v)\val(v)}{(\bm(v)\val(v),\val(v))}$ is a positive integer, where $(\bm(v)\val(v),\val(v))$ is the greatest common divisor of $\bm(v)\val(v)$ and $\val(v)$), and the orientation of $\Ga/\ang{\nu}$ is induced from the orientation of $\Ga$.

We give an example to illustrate the above quotient $\Ga/\ang{\nu}$.

\begin{Ex} \label{Ex-modified-Brauer-tree}
Let $\Ga$ be the ${\bf bFBG}$ in Example \ref{Example-equi-def}. We have that $\Ga/\langle\nu\rangle$ is a multiplicity-free ${\bf mBG}$ given by the diagram
$$\begin{tikzpicture}
\draw (-2,0) -- (-0.2,0);
\draw (0.2,0) -- (3.8,0);
\node at(0,0) {$a$};
\node at(4,0) {$b$};
\node at(-1,0.2) {$\ang{\nu}\cdot(1,a)$};
\node at(1,0.2) {$\ang{\nu}\cdot(2,a)$};
\node at(3,0.2) {$\ang{\nu}\cdot(2,b)$};
\node at(4.5,-0.1) {.};
\end{tikzpicture}$$
Note that $\ang{\nu}\cdot(1,a)$ is an isolated half-edge in $\Ga/\langle\nu\rangle$, the cyclic ordering of the edges incident to vertex $a$ is given by $\ang{\nu}\cdot(1,a)<\ang{\nu}\cdot(2,a)<\ang{\nu}\cdot(1,a)$, to vertex $b$ is given by $\ang{\nu}\cdot(2,b)<\ang{\nu}\cdot(2,b)$.
\end{Ex}

Note that if $\Ga$ is a ${\bf bFBG}$ then there is a covering $\Ga\rightarrow\Ga/\ang{\nu}$ of Brauer $G$-sets (see \cite{LL3}). However, in the present paper we only use coverings at graphs level (see Section \ref{sec-rep-type-graded-bFBGA}), so we do not pursue this more general covering theory here.

\begin{Def}[see {\cite[Definition 2.33]{LL3}}]\label{Def-modi-tree}
Let $\Ga=(V(\Ga),E(\Ga),\bm,\mathfrak{o})$ be a ${\bf bFBG}$ and $\langle\nu\rangle$ the group of automorphisms generated by Nakayama automorphism $\nu$ of $\Ga$. Denote by $O(\Ga/\ang{\nu})$ the set of isolated half-edges of $\Ga/\ang{\nu}$. If $O(\Ga/\ang{\nu})=\varnothing$, then $\Ga/\ang{\nu}$ is a ${\bf BG}$ and define $R_{\Gamma}$ to be $\Ga/\ang{\nu}$; if $O(\Ga/\ang{\nu})\neq\varnothing$, then define $R_{\Gamma}$ to be the ${\bf BG}$ obtained by gluing the corresponding isolated half-edges in the two copies of $\Ga/\ang{\nu}$ together to form an edge. In any case, $R_{\Gamma}$ is called the reduced form of $E$.
\end{Def}

If a ${\bf bFBG}$ $\Ga$ is as in Example \ref{Example-BT}, then we have the reduced form $R_{\Gamma}$ as follows.

\begin{Ex}\label{Example-reduced-graph}
\begin{enumerate}[$(1)$]
	\item In Example \ref{Example-BT} $(1)$, we have that $\Ga/\ang{\nu}$ is the following multiplicity-free ${\bf mBG}$
	$$\xymatrix@R-2em@L-1em{
		\ar@{-}@<2pt>[r]_{1}	a \ar@{-}@<-2pt>[r]^{2} &
	}. $$	
	Then $R_{\Gamma}$ is the following multiplicity-free ${\bf BG}$.
	$$\begin{small}\xymatrix@R-2em@L-1em{
			\ar@{-}@<2pt>[r]_{1}	a \ar@{-}@<-2pt>[r]^{2} &a'
		}
	\end{small} $$
	\item In Example \ref{Example-BT} $(2)$,
	we have that $\Ga/\ang{\nu}$ is the following multiplicity-free ${\bf mBG}$
	$$\xymatrix@R-2em@L-1em{
		&	& & e	\\
		&	a \ar@{-}[r]_{2} \ar@{-}[l]^{1} & b \ar@{-}[ur]_{7}\ar@{-}[dr]^{4}\\
		&& &c~. } $$	
	Then $R_{\Gamma}$ is the following multiplicity-free ${\bf BT}$.	
	$$\begin{small}\xymatrix@R-2em@L-1em{
			&	& & e	\\
			&	a \ar@{-}[r]_{2} \ar@(l,l)@{-}[ddd]^{1} & b \ar@{-}[ur]_{7}\ar@{-}[dr]^{4}\\
			&& & c
			&&& \\
			&	& & e'	\\
			&	a' \ar@{-}[r]_{2'} & b' \ar@{-}[ur]_{7'}\ar@{-}[dr]^{4'}\\
			&& &c'  }
	\end{small} $$
	\item In Example \ref{Example-BT} $(3)$,
	we have that $\Ga/\ang{\nu}$ is the following ${\bf BT}$ $$\xymatrix{ 	a \ar@{-}[r]^{1} 	& b \ar@{-}[r]^{3} & c 	}, $$
where the multiplicity function $\bm$ of ${\bf BT}$ $\Ga/\ang{\nu}$ is given by $\bm(a)=3, \bm(b)=1, \bm(c)=1$. Note that $R_{\Gamma}=\Ga/\ang{\nu}$.
\end{enumerate}
\end{Ex}

\begin{Def}\label{def-brauer-tree-with-orbifolds}
Let $\Ga=(V(\Ga),E(\Ga),O(\Ga),\bm,\mathfrak{o})$ be an ${\bf mBG}$. If the remaining is a Brauer tree after removing all isolated half-edges from $\Gamma$, then $\Gamma$ is called a modified Brauer tree (abbr. ${\bf mBT}$).
\end{Def}

The quotients $\Ga/\ang{\nu}$ in Example \ref{Ex-modified-Brauer-tree} and in Examples \ref{Example-reduced-graph} are ${\bf mBT}$s. In fact, for a ${\bf bFBG}$ $\Gamma$, if the corresponding ${\bf bFBGA}$ $A_{\Gamma}$ is representation-finite or representation-infinite domestic, then the quotient $\Ga/\ang{\nu}$ is either a ${\bf BG}$ or a multiplicity-free ${\bf mBT}$ (see Theorem \ref{Thm-finite-f-BG} and Theorem \ref{Thm-domestic-f-BG}).

Note that we can similarly define graded degree and unbalanced edge for an ${\bf mBT}$ as we do for a ${\bf bFBG}$ in Definitions \ref{graded-degree} and \ref{unbalanced-edge} respectively. These definitions will be given and used in Section \ref{sec-construct}.

It is well-known that a ${\bf BGA}$ is representation-finite if and only if its defining ${\bf BG}$ is a ${\bf BT}$. Bocian and Skowro\'{n}ski have described the defining ${\bf BG}$ of a domestic ${\bf BGA}$ (see Theorem \ref{Brauer-graph-domestic}). We call a ${\bf BG}$ {\it domestic} if its ${\bf BGA}$ is representation-infinite domestic. It is convenient to introduce the following notions.

\begin{Def}\label{def-rep-finite-domestic-modified-Brauer-tree}
Let $\Ga=(V(\Ga),E(\Ga),O(\Ga),\bm,\mathfrak{o})$ be an ${\bf mBT}$. Denote by $|O(\Ga)|$ the cardinality of $O(\Ga)$. Then
\begin{enumerate}[$(1)$]
\item $\Ga$ is called representation-finite if $\Ga$ is multiplicity-free and $|O(\Ga)|=1$.
\item $\Ga$ is called domestic if $\Ga$ is multiplicity-free and $|O(\Ga)|=2$.
\end{enumerate}
\end{Def}

For any ${\bf bFBG}$, the characterization of the representation type of the corresponding ${\bf bFBGA}$ is given as follows. For various notions on representation type, we refer to the textbook \cite{SS} or Appendix \ref{appendix}.

\begin{Thm}[see {\cite[Lemma 3.8]{LL3}}] \label{Thm-finite-f-BG}
	Let $\Ga$ be a ${\bf bFBG}$ and $A$ the corresponding ${\bf bFBGA}$.  $A$ is of finite representation type if and only if $\Ga/\ang{\nu}$ belongs to one of the following two cases, where $\langle\nu\rangle$ is the group of automorphisms generated by Nakayama automorphism $\nu$ of $\Ga$.
\begin{enumerate}[$(1)$]
	\item $\Ga/\ang{\nu}$ is a ${\bf BT}$.
	\item $\Ga/\ang{\nu}$ is a representation-finite ${\bf mBT}$.
\end{enumerate}
\end{Thm}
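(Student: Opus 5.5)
The plan is to reduce to the ${\bf BGA}$ case through the reduced form $R_{\Gamma}$. First I would invoke \cite[Theorem 2.35]{LL3}, quoted in the introduction: $A=A_{\Gamma}$ is of finite representation type if and only if $A_{R_{\Gamma}}$ is. Since $R_{\Gamma}$ is a ${\bf BG}$, Theorem \ref{Brauer-graph-domestic}$(a)$ then says $A$ is representation-finite if and only if $R_{\Gamma}$ is a ${\bf BT}$. What remains is purely combinatorial: show that $R_{\Gamma}$ is a ${\bf BT}$ exactly when $\Gamma/\ang{\nu}$ is a ${\bf BT}$ or a representation-finite ${\bf mBT}$ (Definition \ref{def-rep-finite-domestic-modified-Brauer-tree}).

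I split into two cases according to Definition \ref{Def-modi-tree}.

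\emph{Case $O(\Gamma/\ang{\nu})=\varnothing$.} Here $R_{\Gamma}=\Gamma/\ang{\nu}$, so the equivalence is immediate.

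\emph{Case $O(\Gamma/\ang{\nu})\neq\varnothing$.} Here $R_{\Gamma}$ consists of two copies of $\Gamma/\ang{\nu}$, with each isolated half-edge glued to its twin in the other copy. Let $V$ and $E$ be the vertex and edge sets of $\Gamma/\ang{\nu}$, and $O$ its set of isolated half-edges. Then $R_{\Gamma}$ has $2|V|$ vertices and $2|E|+|O|$ edges, and each vertex keeps its multiplicity in both copies.
\begin{enumerate}[$(i)$]
\item The graph $\Gamma/\ang{\nu}$ is connected, being a quotient of the connected graph $\Gamma$. Hence $|E|\geq |V|-1$, and $R_{\Gamma}$ is connected because $|O|\geq1$.
\item A connected graph is a tree if and only if its number of edges is one less than its number of vertices. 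So $R_{\Gamma}$ is a tree if and only if $2|E|+|O|=2|V|-1$.
\item Combining this with $|E|\geq|V|-1$ and $|O|\geq 1$ forces $|O|=1$ and $|E|=|V|-1$. Equivalently, the graph obtained from $\Gamma/\ang{\nu}$ by deleting its isolated half-edge is a tree. Conversely, these two conditions give $2|E|+|O|=2|V|-1$.
\item For the multiplicity condition: every vertex of $\Gamma/\ang{\nu}$ appears twice in $R_{\Gamma}$ with the same multiplicity. So "at most one vertex has multiplicity $>1$" holds in $R_{\Gamma}$ if and only if $\Gamma/\ang{\nu}$ is multiplicity-free.
\end{enumerate}
Together these show that, in this case, $R_{\Gamma}$ is a ${\bf BT}$ exactly when $\Gamma/\ang{\nu}$ is a multiplicity-free ${\bf mBT}$ with $|O|=1$, i.e.\ a representation-finite ${\bf mBT}$.

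The main point needing care is the precise description of $R_{\Gamma}$ in the second case: that the gluing produces exactly $|O|$ connecting edges, that no vertices are identified, and that the multiplicities are copied unchanged. I would check these directly against the construction in \cite{LL3} underlying Definition \ref{Def-modi-tree}. A secondary check is that the orientation of $R_{\Gamma}$ plays no role, since Theorem \ref{Brauer-graph-domestic}$(a)$ depends only on the underlying graph and the multiplicities.
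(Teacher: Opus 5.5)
Your combinatorial comparison is correct. When $O(\Gamma/\langle\nu\rangle)\neq\varnothing$, the edge count $2|E|+|O|=2|V|-1$ together with $|E|\geq|V|-1$ forces $|O|=1$ and makes $(V,E)$ a tree. Doubling every vertex forces multiplicity-freeness. So the two cases of the statement correspond exactly to $O=\varnothing$ and $O\neq\varnothing$. Your connectedness step deserves one more line: an edge of $\Gamma$ whose two half-edges lie in the same $\langle\nu\rangle$-orbit is a loop, because $\nu$ fixes vertices. Hence passing to the quotient disconnects nothing.

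The paper gives no argument of its own here; it simply imports the statement as \cite[Lemma 3.8]{LL3}. Your route deduces it from \cite[Theorem 2.35]{LL3} plus Theorem~\ref{Brauer-graph-domestic}$(a)$. That is a genuinely different organization. It turns the statement into a purely combinatorial shadow of Bocian--Skowro\'nski. It also proves the first half of the remark following Theorem~\ref{Thm-domestic-f-BG}.

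The one caution is logical direction. In \cite{LL3} the reduction to $R_\Gamma$ is the main theorem of Section 2, while this statement is a lemma of Section 3. So Theorem 2.35 there may well be obtained from Lemmas 3.8 and 3.10 by exactly your comparison. In that case your argument reverses that deduction rather than proving the lemma independently.

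You can make it self-contained with tools from this paper, as follows.
\begin{enumerate}
\item If $\langle\nu\rangle$ is admissible, use the Galois covering $\Gamma\to\Gamma/\langle\nu\rangle=R_\Gamma$ of \cite[Lemma 3.36 and Lemma 5.12]{LL2}. Otherwise, use the two coverings of Theorem~\ref{second-covering}.
\item By Corollary~\ref{cor-projective-injective} and Proposition~\ref{prop-induced-covering} with $T'=\mathbb{L}'$, these coverings descend to the associated string algebras.
\item The band-lifting argument in the proof of Theorem~\ref{induced-covering} is valid for arbitrary special biserial algebras by Remark~\ref{remark-general-statement}. It shows that $\overline{A_\Gamma}$ has no bands if and only if $\overline{A_{R_\Gamma}}$ has none.
\end{enumerate}
This gives the finite-type half of Theorem 2.35 directly, and with it your proof is complete.
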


\begin{Rem}
According to the description of AR-components of representation-finite ${\bf bFBGA}$ in \cite[Section 4]{LL3}, we have the following for the above two cases.

\begin{enumerate}[$(1)$]
	\item In the first case, the corresponding ${\bf bFBGA}$ is stably equivalent to some self-injective Nakayama algebra.
	\item In the second case, the corresponding ${\bf bFBGA}$ is stably equivalent to some canonical M\"obius algebra named by Asashiba in \cite{A2}.
\end{enumerate}
\end{Rem}

\begin{Thm}[see {\cite[Lemma 3.10]{LL3}}]\label{Thm-domestic-f-BG}
	Let $\Ga$ be a ${\bf bFBG}$ and $A$ the corresponding ${\bf bFBGA}$.  $A$ is representation-infinite domestic if and only if $\Ga/\ang{\nu}$ belongs to one of the following two cases, where $\langle\nu\rangle$ is the group of automorphisms generated by Nakayama automorphism $\nu$ of $\Ga$.
	\begin{enumerate}[$(1)$]
		\item $\Ga/\ang{\nu}$ is a domestic ${\bf BG}$.
		\item $\Ga/\ang{\nu}$ is a domestic ${\bf mBT}$.
	\end{enumerate}
\end{Thm}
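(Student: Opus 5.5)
I would not attack the statement through the string combinatorics of $A=A_\Ga$ directly. Instead I would reduce it to the Brauer graph case and then do a purely combinatorial translation. The reduction step is the result quoted in the introduction (\cite[Theorem 2.35]{LL3}): $A_\Ga$ is of finite representation type, respectively domestic, if and only if the ${\bf BGA}$ $A_{R_\Ga}$ of the reduced form $R_\Ga$ is. So $A$ is representation-infinite domestic if and only if $A_{R_\Ga}$ is domestic but not representation-finite. By Theorem \ref{Brauer-graph-domestic} this happens exactly when $R_\Ga$ is a domestic ${\bf BG}$, that is, one of the three graphs listed in parts $(b)$ and $(c)$ of that theorem. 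It then remains to show that $R_\Ga$ is such a graph if and only if $\Ga/\ang{\nu}$ is a domestic ${\bf BG}$ or a domestic ${\bf mBT}$ in the sense of Definition \ref{def-rep-finite-domestic-modified-Brauer-tree}.

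I would split according to Definition \ref{Def-modi-tree}. If $O(\Ga/\ang{\nu})=\varnothing$, then $R_\Ga=\Ga/\ang{\nu}$ by definition, and this gives case $(1)$ at once.

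If $k:=|O(\Ga/\ang{\nu})|>0$, write $\Delta$ for the underlying graph of $\Ga/\ang{\nu}$, with $|V|$ vertices and $|E|$ edges. Then $R_\Ga$ consists of two copies of $\Delta$ joined by $k$ new edges, one for each isolated half-edge, and each vertex of $\Delta$ keeps its multiplicity in both copies. The first Betti number of $R_\Ga$ is therefore
$$b_1(R_\Ga)=(2|E|+k)-2|V|+1=2\,b_1(\Delta)+k-1 .$$
The domestic ${\bf BG}$s of Theorem \ref{Brauer-graph-domestic} have $b_1\le 1$. Since $k\ge 1$, this forces $b_1(\Delta)=0$, i.e. $\Delta$ is a tree, and $k\in\{1,2\}$.
\begin{enumerate}[$(i)$]
\item If $k=2$, then $R_\Ga$ has a unique cycle, so it is domestic if and only if it is multiplicity-free. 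This holds if and only if $\Ga/\ang{\nu}$ is multiplicity-free, i.e. a domestic ${\bf mBT}$.
\item If $k=1$, then $R_\Ga$ is a tree whose multiplicities appear in pairs. It is domestic if and only if exactly one vertex of $\Delta$ has multiplicity $2$ and all the others have multiplicity $1$.
\end{enumerate}
The converse in case $(i)$ is the same computation read backwards. In case $(i)$ one also has to track how the orientation is glued, and check that both the odd-cycle and the even-cycle subcases of Theorem \ref{Brauer-graph-domestic} can occur. This does not matter here, since only domesticity is at stake.

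The main obstacle is the subcase $k=1$ with a multiplicity-$2$ vertex. It is not on the list in the statement, so I must show it cannot arise from a ${\bf bFBG}$. I would argue via realizability. An isolated half-edge of $\Ga/\ang{\nu}$ comes from an edge $i$ whose two half-edges are interchanged by $\nu^{n/2}$, where $n=|\ang{\nu}|$ is even. The multiplicity of a vertex $v$ of the quotient is $\bm(v)n=\bm(v)\val(v)/\gcd(\bm(v)\val(v),\val(v))$, and $\ang{\nu}$ acts freely on half-edges. I would try to show that if $\Delta$ is a tree with exactly one isolated half-edge, then $\nu$ has order exactly $2$ and acts on the half-edges around every vertex as the half-turn of the cyclic ordering, which forces every vertex multiplicity to be $1$. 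If this parity argument fails, the fallback is to check the definition of $R_\Ga$ in \cite{LL3}: there the gluing may divide the multiplicity of the vertex carrying the half-edge, and that would change the count in case $(ii)$.
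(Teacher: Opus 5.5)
Treat the quoted \cite[Theorem 2.35]{LL3} as a black box; the paper gives no proof of this statement and only cites \cite[Lemma 3.10]{LL3}. On that basis, your reduction to ``$R_\Ga$ is a domestic ${\bf BG}$'' is correct. So is your count $b_1(R_\Ga)=2b_1(\Delta)+k-1$, and these settle $k=0$, $k=2$, and every case with $b_1(\Delta)\ge 1$ or $k\ge 3$.

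The gap is the step you flagged: $k=1$ with one vertex of multiplicity $2$. The lemma you propose for it is false.
\begin{enumerate}
\item[(a)] The order of $\nu$ need not be $2$: the graphs $\Ga^{(b)}_r$ of Section \ref{sec-construct} occur for every even $r$.
\item[(b)] More to the point, no argument that uses only how $\nu$ permutes half-edges can force multiplicity $1$. The action of $\nu$ depends on $\bm(v)\val(v)$ only modulo $\val(v)$, hence determines the multiplicity of $v$ in $\Ga/\ang{\nu}$ only modulo $|\ang{\nu}|$.
\end{enumerate}
For instance, take vertices $v,w$, a loop $\ell$ at $v$, and two edges $e_0,e_1$ joining $v$ and $w$. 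Use the cyclic orderings $\ell<e_0<\ell<e_1<\ell$ at $v$ and $e_0<e_1<e_0$ at $w$, with $\bm(v)=\frac32$ and $\bm(w)=\frac12$. Then $\nu$ has order $2$ and is the half-turn at both vertices. Yet $\Ga/\ang{\nu}$ is one edge joining $v$ and $w$ plus one isolated half-edge at $v$, and $v$ has multiplicity $3$. Your fallback does not work either: by Definition \ref{Def-modi-tree}, both copies in $R_\Ga$ keep the multiplicities of $\Ga/\ang{\nu}$.

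All you need is that multiplicity $2$ never occurs, and this follows from facts you already wrote down. Put $d_v=\bm(v)\val(v)$, $g_v=\gcd(d_v,\val(v))$ and $n=|\ang{\nu}|$.
\begin{enumerate}
\item[(i)] On the half-edges at $v$, $\nu$ is rotation by $d_v$ steps, which has order $\val(v)/g_v$.
\item[(ii)] This order must equal $n$. Otherwise a nontrivial power of $\nu$ would fix the half-edges at $v$, contradicting freeness.
\item[(iii)] So $n=\val(v)/g_v$, while the multiplicity of $v$ in $\Ga/\ang{\nu}$ is $\bm(v)n=d_v/g_v$. These two integers are coprime.
\item[(iv)] As you noted, an isolated half-edge forces $n$ to be even. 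Hence for $k\ge 1$ every vertex of $\Ga/\ang{\nu}$ has odd multiplicity.
\end{enumerate}
Therefore, for $k=1$ the tree $R_\Ga$ has no vertex of multiplicity $2$. By Theorem \ref{Brauer-graph-domestic} it is then either a ${\bf BT}$ (exactly when $\Ga/\ang{\nu}$ is multiplicity-free) or nondomestic, so $A$ is never representation-infinite domestic in this case. This is the same coprimality that underlies the conditions $(r,\bm_0)=1$ and ``$r$ is odd'' in Section \ref{sec-construct}.

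A minor point: for $k=2$ the cycle of $R_\Ga$ has length $2d(w_0,w_1)+2$, where $w_0,w_1$ carry the two isolated half-edges. So only the even-cycle case of Theorem \ref{Brauer-graph-domestic} occurs, not both as you suggest.
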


\begin{Rem}
\begin{enumerate}[$(1)$]
\item If $\Ga/\ang{\nu}$ belongs to the second case in Theorem \ref{Thm-finite-f-BG} (resp. Theorem \ref{Thm-domestic-f-BG} ), then the reduced form $R_{\Gamma}$ is a multiplicity-free ${\bf BT}$ (resp. multiplicity-free ${\bf BG}$ with a unique cycle of even length).
\item According to Theorems \ref{Thm-finite-f-BG} and \ref{Thm-domestic-f-BG}, the ${\bf bFBGA}$ $A_{\Ga}$ in Example \ref{Example-reduced-graph} $(1)$ is representation-infinite domestic (in fact it is $2$-domestic by Section \ref{sec-construct} or by Appendix \ref{appendix}), and the ${\bf bFBGA}$s $A_{\Ga}$ in Example \ref{Example-reduced-graph} $(2)$ and Example \ref{Example-reduced-graph} $(3)$ are representation-finite.
\end{enumerate}
\end{Rem}

\section{The representation type of the associated graded algebras of biserial fractional Brauer graph algebras}\label{sec-rep-type-graded-bFBGA}

In this section, we assume that $A=\bk Q/I$ is a ${\bf bFBGA}$ associated with a ${\bf bFBG}$ $\Ga$ and that $\gr(A)=\bk Q/I'$ its associated graded algebra. Moreover, let $\overline{A}=\bk Q/I_1$ and $\overline{\gr(A)}=\bk Q/I_2$ be the associated string algebras defined in Subsection \ref{sec-string-alg-of-bFBGA}. Note that by definition, $A$ and $\overline{A}$ (resp. $\gr(A)$ and $\overline{\gr(A)}$) have the same representation type and $\overline{A}$ is a quotient algebra of $\overline{\gr(A)}$. In order to describe the representation type of $\gr(A)$, it is enough to consider the two cases that $A$ is of finite representation type or is domestic (in this section and in the next section when we say domestic we always mean representation-infinite domestic).

We have the following result on ${\bf bFBGA}$s and their associated graded algebras. For the definition of Galois covering of quivers with relations we refer readers to Appendix \ref{appendix-1}.

\begin{Thm}\label{induced-covering}
	Let $A=\bk Q_A/I_A$ and $B=\bk Q_B/I_B$ be two ${\bf bFBGA}$s associated with some
	${\bf bFBG}$s, and let $\gr(A)=\bk Q_A/I_A'$ and $\gr(B)=\bk Q_B/I_B'$ be the associated graded algebras of
	 $A$ and $B$ respectively. Let   $\overline{\gr(A)}=\bk Q_A/I_{A,2}$ and $\overline{\gr(B)}=\bk Q_B/I_{B,2}$ be the associated string algebras defined in Equation (\ref{equ-string-graded}). If $f: (Q_A, I_A)\longrightarrow(Q_{B},I_{B})$ is a Galois covering  of quivers with relations with group $G$, then $f$ induces Galois coverings of quivers with relations $(Q_A, I'_A)\longrightarrow (Q_{B},I'_{B})$ and $(Q_A, I_{A,2})\longrightarrow (Q_{B},I_{B,2})$ with the same group. Therefore, 	
	\begin{enumerate}[$(1)$]
		\item  $\mathrm{Ba}(\overline{\gr(B)})$ is empty if and only if $\mathrm{Ba}(\overline{\gr(A)})$ is empty,
		\item The cardinality of $\mathrm{Ba}(\overline{\gr(B)})$ is a positive number if and only if the cardinality of $\mathrm{Ba}(\overline{\gr(A)})$ is a positive number.
	\end{enumerate}
\end{Thm}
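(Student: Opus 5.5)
The plan has two parts. First we show that the Galois covering $f:(Q_A,I_A)\to(Q_B,I_B)$ with group $G$ remains a Galois covering after passing to the graded relations $I'$ and to the string-algebra relations $I_2$. Then we transfer the band counts using the push-down functor.

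\textbf{Step 1: compatibility with lengths and relation types.} Recall that a Galois covering of quivers with relations is a quiver morphism $f:Q_A\to Q_B$ with the following properties. It is a bijection on arrows starting (resp. ending) at each vertex. $G$ acts freely on $Q_A$ with $Q_A/G\cong Q_B$. Finally, $I_B$ is generated by the images $f(I_A)$, equivalently $I_A$ is generated by the lifts of the generators of $I_B$. Since $f$ sends paths to paths of the same length, it preserves path lengths. By the local bijectivity it also identifies the generating relations of $I_A$ and $I_B$ of each type:
\begin{itemize}
\item a path relation of the second or third type lifts to, and pushes down to, a path relation of the same type;
\item a commutativity relation $p-q$ of the first type at a vertex $i$ of $Q_A$ maps to the first-type relation $f(p)-f(q)$ at $f(i)$, with $\ell(f(p))=\ell(p)$ and $\ell(f(q))=\ell(q)$.
\end{itemize}
Concretely, the maximal nonzero paths starting at $i$ map onto those starting at $f(i)$, because the local structure of the indecomposable projectives $P_i$ and $P_{f(i)}$ is the same. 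Hence $i\in\mathbb{L}_A$ if and only if $f(i)\in\mathbb{L}_B$. Likewise $i\in\mathbb{W}_A$ if and only if $f(i)\in\mathbb{W}_B$, and so $i\in\mathbb{L}'_A$ if and only if $f(i)\in\mathbb{L}'_B$. The rule $\rho\mapsto\rho'$ of Lemma \ref{quiver-and-relations} depends only on the two lengths, so it commutes with $f$. Using the displayed formulas for $I'$ and $I_2$, the generating set of $I'_A$ (resp. $I_{A,2}$) is exactly the set of $G$-lifts of the generating set of $I'_B$ (resp. $I_{B,2}$). The $G$-action on $Q_A$ is unchanged, so $f$ is a Galois covering $(Q_A,I'_A)\to(Q_B,I'_B)$ and $(Q_A,I_{A,2})\to(Q_B,I_{B,2})$ with group $G$. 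The only care needed is that the paths $p_i,q_i$ added for $i\in\mathbb{L}'$ are themselves $G$-stable sets; this holds because $\mathbb{L}'_A=f^{-1}(\mathbb{L}'_B)$.

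\textbf{Step 2: transfer of strings and bands.} Now both $\overline{\gr(A)}$ and $\overline{\gr(B)}$ are string algebras, and $f$ is a Galois covering between their bound quivers. Strings of $\overline{\gr(A)}$ map to strings of $\overline{\gr(B)}$, since $f$ maps words to words and zero relations correspond. Conversely, every string of $\overline{\gr(B)}$ lifts uniquely once a starting vertex over its source is fixed, by local bijectivity.

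For a band $C$ of $\overline{\gr(A)}$, the image $f(C)$ is a closed string all of whose powers are strings. Hence $f(C)$ is a power of a band of $\overline{\gr(B)}$, and in particular $\mathrm{Ba}(\overline{\gr(B)})\neq\varnothing$.

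Conversely, take a band $D$ of $\overline{\gr(B)}$ and lift its powers $D^n$ from a fixed vertex $x$. The endpoints of the lifts are $g^n x$ for some $g\in G$ determined by $D$. Since $Q_A$ is finite, $G$ is finite, so $g^m=1$ for $m$ equal to the order of $g$. Then the lift of $D^m$ is a closed walk in $Q_A$, and all its powers are strings because they are lifts of the strings $D^{mk}$. This lift is a power of a band of $\overline{\gr(A)}$.

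\textbf{Conclusion and main obstacle.} Together these give (1): $\mathrm{Ba}(\overline{\gr(B)})$ is empty if and only if $\mathrm{Ba}(\overline{\gr(A)})$ is empty. For (2), I claim the assignments above induce a finite-to-finite correspondence between $\sim_A$-classes of bands on the two sides. Each band class of $\overline{\gr(B)}$ has at most $|G|$ lift classes, one for each choice of starting vertex. Each band class of $\overline{\gr(A)}$ maps to a single class, namely the primitive root of its image. Hence one set is finite and nonempty if and only if the other is.

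I expect this to be the main obstacle: checking carefully that rotation and inversion equivalence classes behave correctly under $f$, and that a primitive band really lifts to finitely many band classes rather than to infinitely many distinct strings. Finiteness of $G$ settles this, but the bookkeeping of rotations and of the power $m$ needs care. Alternatively one could use Appendix \ref{appendix-1}, which shows the push-down functor preserves projective-injectives, together with the domestic$\Leftrightarrow$polynomial-growth result, but I would first try the direct combinatorial argument above.
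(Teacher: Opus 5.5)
Your proposal is correct. Step 2 and your conclusion follow the paper's argument closely. The paper likewise pushes strings down, sends a band $b$ to the primitive root $b'$ of $f(b)=(b')^m$, checks compatibility with $\sim_A$, and gets surjectivity by lifting $b'$ repeatedly until the lift closes up. Your bookkeeping is a bit sharper than the paper's ``repeat until it ends''. Tracking the element $g\in G$, so that the lift of $D^n$ from $x$ ends at $g^nx$, shows why the lift actually returns to its starting vertex. Your observation that a band $b$ with $f(b)=D^m$ must be the lift of $D^{\mathrm{ord}(g)}$ gives $m=\mathrm{ord}(g)\le|G|$, with one candidate per vertex of the fibre. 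That justifies the finiteness of the fibres of $\overline{g}$, which the paper asserts without detail.

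The genuine difference is Step 1. The paper never touches the explicit generators. It invokes Proposition \ref{prop-covering-on-graded-categories}: the covering functor induces a Galois covering $\gr(F)$ of graded categories, because push-down preserves radical filtrations, and the relations are the kernel of $\bk Q\to\gr(M)$. It then invokes Proposition \ref{prop-induced-covering}, which handles factoring out socles of non-uniserial projective-injectives over $T=f^{-1}(T')$ and rests on Corollary \ref{cor-projective-injective}. You instead check directly three things: a path lies in $I_A$ if and only if its image lies in $I_B$; maximal nonzero paths correspond; and hence the first-type relations, the sets $\mathbb{L},\mathbb{W},\mathbb{L}'$ and the length rule $\rho\mapsto\rho'$ of Lemma \ref{quiver-and-relations} all commute with $f$. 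So the generators of $I'_A$ and $I_{A,2}$ are exactly the $G$-lifts of those of $I'_B$ and $I_{B,2}$.

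Your route is elementary and needs no statement about injective modules over $\gr(A)$ or $\gr(B)$, since $\mathbb{L}'$ is defined combinatorially. The paper's route costs the appendix machinery but does not depend on the explicit ${\bf bFBGA}$ presentation, which is what makes Remark \ref{remark-general-statement} (the same proof for arbitrary special biserial algebras) available.
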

\begin{proof}
Since the constructions of the ideals $I'_A$, $I_{A,2}$ of $\bk Q_A$ and the ideals $I'_B$, $I_{B,2}$ of $\bk Q_B$ follows the construction procedure of the four ideals in Propositions \ref{prop-covering-on-graded-categories}, \ref{prop-induced-covering}, it follows that $f$ induces Galois coverings  of quivers with relations (both are still denoted by $f$) $(Q_A, I'_A)\longrightarrow(Q_{B},I'_{B})$ and
$(Q_A, I_{A,2})\longrightarrow(Q_{B},I_{B,2})$ with group $G$.

For the Galois covering $f: (Q_A, I_{A,2})\longrightarrow(Q_{B},I_{B,2})$, we add the definiton of $f$ on inverse arrows in $Q_A$ as follows:
$$f(\alpha^{-1}):=f(\alpha)^{-1}.$$
Moreover, since $Q_B$ is connected and $f:Q_A\rightarrow Q_B$ is a covering of quivers, we have a surjection map (which is also denoted by $f$) between the words in $\overline{\gr(A)}$ and the words in $\overline{\gr(B)}$. Since $f:(Q_A, I_{A,2})\longrightarrow(Q_{B},I_{B,2})$ is a Galois covering of quivers with relations, a path $p$ in $Q_A$ belongs to $I_{A,2}$ if and only if $f(p)$ belongs to $I_{B,2}$. Then $f$ restricts to a surjection map between the strings in $\overline{\gr(A)}$ and the strings in $\overline{\gr(B)}$.

Since $f$ is a surjection map from the strings in
$\overline{\gr(A)}$ to that of $\overline{\gr(B)}$ preserving the lengths of the corresponding strings,
we have that all powers of $f(b)$ are strings for any band $b$ in $\overline{\gr(A)}$  and therefore there exist some string $b'$ in $\overline{\gr(B)}$ and a largest positive integer $m$ satisfying $f(b)=(b')^m$. Clearly, $b'$ is a band in $\overline{\gr(B)}$ and there is a map $g$ from the  bands in $\overline{\gr(A)}$ to the  bands in $\overline{\gr(B)}$ as follows.
\begin{gather*}
	\{Bands~ in~ \overline{\gr(A)}\}	
	\xrightarrow{g}\{Bands~ in~ \overline{\gr(B)}\} \\
	b\longmapsto b',
\end{gather*}
\noindent where $b'$ is defined as above.

It is straightforward to show that $g$ maps equivalent bands in $\overline{\gr(A)}$ to equivalent bands in $\overline{\gr(B)}$ and therefore $g$ induces the following map.
\begin{gather*}
\mathrm{Ba}(\overline{\gr(A)})	
	\xrightarrow{\overline{g}}\mathrm{Ba}(\overline{\gr(B)}) \\
\qquad\overline{b}\longmapsto \overline{g(b)},
\end{gather*}
 where $\overline{b}$ (resp. $\overline{g(b)}$) is the equivalence class of $b$ (resp. $g(b)$) in $\mathrm{Ba}(\overline{\gr(A)})$ (resp. $\mathrm{Ba}(\overline{\gr(B)})$).

On the other hand, for any band $b'$ in
$\overline{\gr(B)}$, since $b'$ is also a string and $f$ is surjective, there is a string $c_1$
in $\overline{\gr(A)}$ satisfying $f(c_1)=b'$.
There are two cases to be considered.

If $s(c_1)=t(c_1)$, then $f((c_1)^m)={b'}^m$  for each positive integer $m$ and therefore all powers of  $c_1$
are strings. Moreover, $c_1$ is not a power of a string of smaller length. Otherwise, it contradicts the condition that $b'$ is a band in $\overline{\gr(B)}$. Therefore, $c_1$ is a band in $\overline{\gr(A)}$ and $g(c_1)=b'$.

If $s(c_1)\neq t(c_1)$, then there is a string $c_2$ with $s(c_2)=t(c_1)$ satisfying $f(c_2)=b'$. Note that $s(c_2)\neq t(c_2)$ and $c_2c_1$ is also a string in  $\overline{\gr(A)}$. Repeating the above steps for $c_2c_1$, since $Q_A$ is a finite quiver, it will end after finite steps and there is a band $b$ in $\overline{\gr(A)}$ satisfying $g(b)=b'$.

Therefore, the above map $g$ is surjective and $g^{-1}(b')$ is a finite set for each band $b'$ in $\overline{\gr(B)}$. Thus  $\overline{g}:\mathrm{Ba}(\overline{\gr(A)})\rightarrow\mathrm{Ba}(\overline{\gr(B)})$ is surjective and  $\overline{g}^{-1}(\overline{b'})$ is a finite set for each band $b'$ in $\overline{\gr(B)}$. It follows that we have the desired result.
\end{proof}

\begin{Rem}\label{remark-general-statement}
If the two ${\bf bFBGA}$s $A$ and $B$ in above theorem are replaced with two general special biserial algebras, then its proof still carries over.
\end{Rem}

Since $\gr(A)$ (resp. $\gr(B)$)  and $\overline{\gr(A)}$ (resp. $\overline{\gr(B)}$) have the same representation type, from the above theorem and Theorem \ref{Theorem}, we have  the following corollary.

\begin{Cor}\label{main-cor}
	Let $A=\bk Q_A/I_A$ and $B=\bk Q_B/I_B$ be two ${\bf bFBGA}$s associated with some
${\bf bFBG}$s,
and let $\gr(A)=\bk Q_A/I_A'$ and $\gr(B)=\bk Q_B/I_B'$ be the associated graded algebras of $A$ and $B$ respectively. If there is a Galois covering of quivers with relations from  $(Q_A, I_A)$ to $(Q_{B},I_{B})$, then we have the following
	\begin{enumerate}[$(1)$]
		\item $\gr(A)$ is of finite representation type  if and only if $\gr(B)$ is of finite representation type.
		\item $\gr(A)$ is domestic if and only if $\gr(B)$ is domestic.
	\end{enumerate}
\end{Cor}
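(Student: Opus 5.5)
The plan is to deduce the corollary from Theorem \ref{induced-covering}, the fact that a special biserial algebra and its associated string algebra have the same representation type, and the band-counting criterion for string algebras recorded in Theorem \ref{Theorem} of the appendix.

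First, apply Theorem \ref{induced-covering} to the given Galois covering $f:(Q_A,I_A)\to(Q_B,I_B)$ with group $G$. It yields two induced Galois coverings with the same group, $(Q_A,I'_A)\to(Q_B,I'_B)$ and $(Q_A,I_{A,2})\to(Q_B,I_{B,2})$. It also gives the two band statements: $\mathrm{Ba}(\overline{\gr(B)})=\varnothing$ if and only if $\mathrm{Ba}(\overline{\gr(A)})=\varnothing$, and $|\mathrm{Ba}(\overline{\gr(B)})|$ is a positive integer if and only if $|\mathrm{Ba}(\overline{\gr(A)})|$ is.

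Next, since $\gr(A)$ and $\gr(B)$ are special biserial by Lemma \ref{quiver-and-relations}, the operation $\overline{(\cdot)}$ preserves representation type. So it suffices to prove both statements for the string algebras $\overline{\gr(A)}$ and $\overline{\gr(B)}$. For a finite-dimensional string algebra I will use the standard description recalled in Theorem \ref{Theorem}:
\begin{enumerate}[$(1)$]
\item it is representation-finite if and only if it has no bands;
\item it is representation-infinite domestic if and only if the set of equivalence classes of bands is finite and nonempty.
\end{enumerate}
For (1), one direction is clear, since a band gives a one-parameter family of band modules. The converse, that no bands implies only finitely many strings, holds because on a finite quiver arbitrarily long strings force a repeated closed subword, which can be turned into a band. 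Statement (2) is the domestic criterion: bands up to $\sim_A$ correspond to one-parameter families, and infinitely many classes force nonpolynomial growth.

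Assertion (1) then follows from the first band equivalence and assertion (2) from the second. The main thing to check is that the appendix statement really covers arbitrary string algebras, not only self-injective ones, since $\gr(A)$ is generally not self-injective. If Theorem \ref{Theorem} is stated only for special biserial algebras, I would apply it directly to $\gr(A)$ and $\gr(B)$, using that bands of $\overline{\gr(A)}$ parametrize the band modules of $\gr(A)$. Everything else is bookkeeping from the previous theorem.
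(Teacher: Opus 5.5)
Your proposal is correct and follows the paper's own argument: the paper likewise deduces the corollary from the band statements of Theorem \ref{induced-covering}, the fact that $\gr(A)$ and $\gr(B)$ have the same representation type as $\overline{\gr(A)}$ and $\overline{\gr(B)}$, and the band criterion of Theorem \ref{Theorem}. As you suspected, Theorem \ref{Theorem} is stated for arbitrary special biserial algebras, not only self-injective ones, and your sketch that a string algebra with no bands is representation-finite only spells out a standard fact the paper uses without comment.
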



For a ${\bf bFBG}$ $\Ga=(V(\Ga),E(\Ga),\bm,\mathfrak{o})$, an {\it automorphism} of $\Ga$ is a graph automorphism of $\Ga$ which preserves multiplicities of vertices and preserves orientation. Note that the Nakayama automorphism of $\Ga$ is an automorphism of $\Ga$ in the sense as above. A group of automorphisms $G$ of $\Ga$ is said to be {\it admissible} if there does not exist an automorphism $g\in G$ which maps a half-edge $h$ in $\Ga$ to the other half-edge $h'$ belonging to the same edge.

\begin{Def}[see {\cite[Definitions 3.1 and 3.30]{LL2}}]
Let $\Ga=(V(\Ga),E(\Ga),\bm,\mathfrak{o})$ and $\Ga'=(V(\Ga'),E(\Ga'),\bm',\mathfrak{o}')$ be two ${\bf bFBG}$s with the sets of half-edges $H(\Ga)$ and $H(\Ga')$ respectively, and let $f:\Ga\rightarrow \Ga'$ be a morphism of graphs. Then $f$ is said to be a covering of ${\bf bFBG}$s if it satisfies the following two conditions.
\begin{enumerate}[$(1)$]
		\item $f$ preserves cyclic ordering of half-edges, that is, if the half-edge $h_2$ is the successor of the half-edge $h_1$ under the cyclic ordering of half-edges incident to vertex $v$ in $\Ga$, then $f(h_2)$ is the successor of $f(h_1)$ under the cyclic ordering of half-edges incident to vertex $f_0(v)$ in $\Ga'$, where $f_0:V(\Ga)\rightarrow V(\Ga')$ is the map between the sets of vertices induced by $f$.
		\item $\bm(v)\val(v)=\bm'(f_0(v))\val(f_0(v))$ for each vertex $v$ in $\Gamma$, where $\val(v)$ (resp. $\val(f_0(v))$) denotes the valency of the vertex $v$ (resp. $f_0(v)$).
	\end{enumerate}
A covering $f:\Ga\rightarrow \Ga'$ is called a Galois covering of ${\bf bFBG}$s if there exists an admissible group of automorphisms $G$ of $\Ga$ such that $fg=f$ as maps from $H(\Ga)$ to $H(\Ga')$ for each $g\in G$, and $f^{-1}(h')$ is a transitive $G$-set for each $h'\in H(\Ga')$.
\end{Def}

Let $A=\bk Q_A/I_A$ be a ${\bf bFBGA}$ associated with some
 ${\bf bFBG}$ $\Ga$. Let $\nu$ be the Nakayama automorphism of $\Ga$. If the group $\ang{\nu}$ is admissible, then $\Ga/\ang{\nu}$ is a ${\bf BG}$ and there is a Galois covering $\Ga\rightarrow \Ga/\ang{\nu}$ (cf. \cite[Lemma 3.36]{LL2}), which induces a Galois covering of quivers with relations from $(Q_A, I_A)$ to $(Q_{B},I_{B})$ (cf. \cite[Lemma 5.12]{LL2}), where $B=\bk Q_B/I_B$ is the ${\bf BGA}$ associated with $\Ga/\ang{\nu}$. If $\ang{\nu}$ is not admissible, then $\Ga/\ang{\nu}$ is not a ${\bf BG}$ (that is, it is an ${\bf mBG}$ whose isolated half-edge set is nonempty) and according to the proof of \cite[Proposition 5.12]{LL3}, we have the following lemma.

\begin{Lem} \label{two-coverings}
Let $\Ga$ be a ${\bf bFBG}$ with Nakayama automorphism $\nu$ such that $\Ga/\ang{\nu}$ is not a ${\bf BG}$, and let $R_{\Gamma}$ be the reduced form of $\Ga$. Then there exists a ${\bf bFBG}$ $\Ga'$ together with two Galois coverings $p:\Ga'\rightarrow \Ga$ and $q:\Ga'\rightarrow R_{\Gamma}$.
\end{Lem}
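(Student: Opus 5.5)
The plan is to construct $\Ga'$ as a double cover of $\Ga$ in which the non-admissibility of $\ang{\nu}$ disappears, and then to exhibit $q$ by quotienting $\Ga'$ by its own Nakayama group.

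\textbf{Step 1: the obstruction.} Let $n=|\ang{\nu}|$. Since $\ang{\nu}$ acts freely on half-edges and $\Ga/\ang{\nu}$ has isolated half-edges, there is a half-edge $h$ with $\nu^k(h)=\iota(h)$ for some $k$. Freeness gives $\nu^{2k}=1$, and in fact one checks that $k=n/2$ is forced. So $n$ is even, and the isolated half-edges of $\Ga/\ang{\nu}$ correspond exactly to the edges $\{h,\iota(h)\}$ flipped by $\nu^{n/2}$.

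\textbf{Step 2: the cover $\Ga'$.} Take two copies $H(\Ga)\times\{0,1\}$ of the half-edges, with the same vertex incidences (vertices doubled) and the same cyclic orderings. Define the involution $\iota'$ as follows. For an edge flipped by $\nu^{n/2}$, set $\iota'(h,\epsilon)=(\iota(h),1-\epsilon)$. For every other edge, set $\iota'(h,\epsilon)=(\iota(h),\epsilon)$. Give each vertex $(v,\epsilon)$ multiplicity $\bm(v)$; the valency is unchanged. Then $p(h,\epsilon)=h$ is a covering of ${\bf bFBG}$s with group $\mathbb{Z}/2$, and the swap $\epsilon\mapsto 1-\epsilon$ is admissible. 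This is a Galois covering, and $\Ga'$ is connected provided at least one flipped edge exists, which it does.

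One must check condition $(2)$ of Definition \ref{Def-f-BG} on $\Ga'$. This follows from condition $(2)$ on $\Ga$, since the rotations are computed in the copies of $\Ga$ and $\iota'$ only changes the $\epsilon$-coordinate consistently along an edge.

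\textbf{Step 3: the Nakayama automorphism of $\Ga'$.} Its Nakayama automorphism is $\nu'(h,\epsilon)=(\nu(h),\epsilon)$. This needs a check: $\nu'$ must commute with $\iota'$, which holds because $\nu$ preserves the set of flipped edges, being a power-commuting automorphism.

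Next we claim that $\ang{\nu'}$ is admissible. Suppose $\nu'^k(h,\epsilon)=\iota'(h,\epsilon)$. Comparing second coordinates, this forces the edge to be unflipped with $\nu^k(h)=\iota(h)$, which contradicts that edge being unflipped. Hence $\Ga'/\ang{\nu'}$ is a ${\bf BG}$, and by \cite[Lemma 3.36]{LL2} there is a Galois covering $\Ga'\to\Ga'/\ang{\nu'}$.

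\textbf{Step 4: identifying the quotient with $R_\Ga$.} The quotient $\Ga'/\ang{\nu'}$ has vertices $V(\Ga)\times\{0,1\}$, so it consists of two copies of $\Ga/\ang{\nu}$. Each isolated half-edge $\ang{\nu}h$ in copy $\epsilon$ pairs with $\ang{\nu}\iota(h)=\ang{\nu}h$ in copy $1-\epsilon$, which is exactly the gluing in Definition \ref{Def-modi-tree}.

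The multiplicities also agree. The multiplicity of $(v,\epsilon)$ is $\bm(v)|\ang{\nu'}|=\bm(v)n$, which coincides with the one in $\Ga/\ang{\nu}$ because $|\ang{\nu'}|=n$. Orientations are induced on both sides, so $\Ga'/\ang{\nu'}=R_\Ga$, and $q$ is the quotient map.

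\textbf{Main obstacle.} The hardest part is bookkeeping of loops and of the case where $h$ and $\iota(h)$ sit at the same vertex, where half-edge notation as in Remark \ref{remark-half-edge} is needed. The key check is Step 3, since the freeness of $\nu$ on $\Ga'$ and the admissibility argument are what make the quotient a genuine ${\bf BG}$.
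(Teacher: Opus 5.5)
Your construction is the one the paper uses (taken from the proof of Proposition 5.12 in \cite{LL3} and written out right after the lemma): $\Ga'$ is two copies of $\Ga$ in which exactly the edges whose two half-edges lie in one $\ang{\nu}$-orbit are crossed between the copies, $p$ is the projection with group $\mathbb{Z}/2\mathbb{Z}$, and $q$ is the quotient by the admissible group $\ang{\nu'}$, which glues the isolated half-edges of the two copies of $\Ga/\ang{\nu}$ as in the definition of $R_{\Gamma}$; your verifications are sound. One point needs tightening: $\Ga'$ is connected because every crossed edge is a loop (as $\nu$ fixes all vertices), so it joins $(v,0)$ to $(v,1)$; the mere existence of a crossed edge is not enough, since crossing a non-loop edge of a tree gives a disconnected double cover.
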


The above ${\bf bFBG}$ $\Ga'$ is constructed as follows. Denote by $\Ga=(V(\Ga),E(\Ga),\bm,\mathfrak{o})$ and denote by $H(\Ga)$ the set of its half-edges. The set of vertices $V(\Ga')$ (resp. the set of half-edges $H(\Ga')$) of $\Ga'$ is $V(\Ga)_1\sqcup V(\Ga)_2$ with $V(\Ga)_1=V(\Ga)_2=V(\Ga)$ (resp. $H(\Ga)_1\sqcup H(\Ga)_2$ with $H(\Ga)_1=H(\Ga)_2=H(\Ga)$). For a vertex $v\in V(\Ga)$ (resp. a half-edge $h\in H(\Ga)$), we denote by $v_i$ (resp. $h_i$) the vertex $v$ (resp. half-edge $h$) in $V(\Ga)_i$ (resp. $H(\Ga)_i$) for $i=1,2$. The edges of $\Ga'$ are defined as follows. For a half-edge $h\in H(\Ga)$, suppose that $h$ together with $h'\in H(\Gamma)$ form an edge. If $h'$ belongs to the $\ang{\nu}$-orbit of $h$, then $h_i$ together with $h'_{3-i}$ form an edge of $\Ga'$ for $i=1,2$; otherwise, $h_i$ together with $h'_i$ form an edge of $\Ga'$ for $i=1,2$. The multiplicity $\bm'$ of $\Ga'$ is given by $\bm'(v_i)=\bm(v)$ for each $v_i\in V(\Ga)_i$ with $i=1,2$. The orientation $\mathfrak{o}'$ of $\Ga'$ is defined as follows. If the cyclic ordering of the half-edges incident to vertex $v\in V(\Ga)$ is given by $h^{(1)}<h^{(2)}<\cdots<h^{(n)}<h^{(1)}$, then the cyclic ordering of the half-edges incident to vertex $v_i\in V(\Ga)_i$ is given by $h^{(1)}_{i}<h^{(2)}_{i}<\cdots<h^{(n)}_{i}<h^{(1)}_{i}$ for $i=1,2$.

\begin{Ex}\label{Example-two-coverings}
Let $\Ga$ be the ${\bf bFBG}$ in Example \ref{Example-BT} $(1)$. Write the edge of $\Ga$ by $i=(i',i'')$ for $i=1,2$. Then $\Ga'$ is the ${\bf bFBG}$
$$\tikzset{every picture/.style={line width=0.75pt}}
\begin{tikzpicture}[x=15pt,y=15pt,yscale=1,xscale=1]
\node at(0,0) {$a_1$};
\node at(5,0) {$a_2$};
\draw    (-0.3,0.1) .. controls (-4.5,3) and (0.5,3) .. (4.7,0.1) ;
\draw    (-0.2,0.3) .. controls (-1.5,3) and (3.5,3) .. (4.8,0.3) ;
\draw    (0.2,0.3) .. controls (1.5,3) and (6.5,3) .. (5.2,0.3) ;
\draw    (0.3,0.1) .. controls (4.5,3) and (9.5,3) .. (5.3,0.1) ;
\end{tikzpicture}$$
where $\bm(a_1)=\bm(a_2)=\frac{1}{2}$ and the four edges from left to right are given by $(1_1',1_2''),(2_1',2_2''),(1_1'',1_2'),(2_1'',2_2')$. Note that both Galois coverings $p$ and $q$ in this example have the same group $\mathbb{Z}/2\mathbb{Z}$.
\end{Ex}

Combining Lemma \ref{two-coverings} and \cite[Lemma 5.12]{LL2}, we have the following theorem.

\begin{Thm}\label{second-covering}
Let $A=\bk Q_A/I_A$ be a ${\bf bFBGA}$ associated with some
 ${\bf bFBG}$  $\Ga$. Suppose that $\Ga/\ang{\nu}$ is not a ${\bf BG}$, $R_{\Gamma}$ is the reduced form of $\Ga$ and $B=\bk Q_B/I_B$ is the ${\bf BGA}$ associated with $R_{\Gamma}$. Then there exists a (finite-dimensional indecomposable) ${\bf bFBGA}$ $C=\bk Q_C/I_C$ such that there are two Galois coverings of quivers with relations from $(Q_C, I_C)$ to $(Q_{B},I_{B})$ and from $(Q_C, I_C)$ to $(Q_{A},I_{A})$.
\end{Thm}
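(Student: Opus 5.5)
The plan is to take $C$ to be the ${\bf bFBGA}$ $A_{\Ga'}$ attached to the ${\bf bFBG}$ $\Ga'$ of Lemma \ref{two-coverings}. Lemma \ref{two-coverings} already gives Galois coverings of ${\bf bFBG}$s $p:\Ga'\rightarrow\Ga$ and $q:\Ga'\rightarrow R_{\Gamma}$. We then apply \cite[Lemma 5.12]{LL2} to each of them, which says that a Galois covering of ${\bf bFBG}$s induces a Galois covering of the associated quivers with relations with the same group. This produces the two required Galois coverings $(Q_C,I_C)\rightarrow(Q_A,I_A)$ and $(Q_C,I_C)\rightarrow(Q_B,I_B)$. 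Note that $R_{\Gamma}$ is a ${\bf BG}$, so $B$ is a ${\bf BGA}$ and in particular a ${\bf bFBGA}$, and the lemma applies to $q$.

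The remaining task is to check that $C$ is a genuine finite-dimensional indecomposable ${\bf bFBGA}$. By Theorem \ref{general-properties-of-BGA}(1), every ${\bf bFBG}$ gives a finite-dimensional, basic, indecomposable algebra. So it suffices to check that $\Ga'$ really is a ${\bf bFBG}$, and in particular that it is connected.

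First, $\bm'(v_i)\val(v_i)=\bm(v)\val(v)$ is a positive integer, since the construction duplicates the cyclic orderings. Condition (2) of Definition \ref{Def-f-BG} comes from lifting the corresponding condition in $\Ga$ along $p$, using that $p$ preserves successors and the numbers $\bm\val$.

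Connectedness is the delicate point. Since $\Ga/\ang{\nu}$ is not a ${\bf BG}$, there is an edge of $\Ga$ whose two half-edges $h,h'$ lie in the same $\ang{\nu}$-orbit. By construction, this edge is lifted to the edges joining $h_1$ with $h'_2$ and $h_2$ with $h'_1$, which connect the two copies of $\Ga$. Because $\Ga$ is connected, each copy is connected within itself apart from such crossing edges, so $\Ga'$ is connected.

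I expect the main obstacle to be verifying that the hypotheses of \cite[Lemma 5.12]{LL2} hold for $q$, whose target $R_{\Gamma}$ is obtained by gluing two copies of $\Ga/\ang{\nu}$. In particular one must check that the deck group of $q$ is admissible and acts compatibly with orientations and multiplicities; here the multiplicity on $R_{\Gamma}$ is $\bm(v)|\ang{\nu}|$ with valency divided by $|\ang{\nu}|$, so the products $\bm\val$ agree. I would rely on the proof of \cite[Proposition 5.12]{LL3}, which is where Lemma \ref{two-coverings} comes from.
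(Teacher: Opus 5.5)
Your proposal is correct and matches the paper's argument: take $C=A_{\Ga'}$ for the ${\bf bFBG}$ $\Ga'$ of Lemma \ref{two-coverings}, and apply \cite[Lemma 5.12]{LL2} to the two Galois coverings $p$ and $q$. The extra checks you sketch (that $\Ga'$ satisfies Definition \ref{Def-f-BG} and is connected) are not needed, because Lemma \ref{two-coverings} already asserts that $\Ga'$ is a ${\bf bFBG}$, and finite-dimensionality and indecomposability of $C$ then follow from Theorem \ref{general-properties-of-BGA}(1).
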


We now prove our main result in this section about the representation type of the associated graded algebras of ${\bf bFBGA}$s.  

\begin{Thm} \label{Thm-rep-typ}
	Let $A$ be a ${\bf bFBGA}$ associated with some ${\bf bFBG}$  $\Ga$ and $\mathrm{gr}(A)$ the graded algebra associated with the radical filtration of $A$. Suppose that $\nu$ is the Nakayama automorphism of $\Ga$ and $R_{\Gamma}$ is the reduced form of $\Ga$. Let $B$ be the ${\bf BGA}$ associated with $R_{\Gamma}$ and $\gr(B)$  the graded algebra associated with the radical filtration of $B$.  		
	Then we have the following.
\begin{enumerate}[$(1)$]
\item If $\Ga/\ang{\nu}$ is one of the four cases: (i) a ${\bf BT}$; (ii) a representation-finite ${\bf mBT}$; (iii) a domestic ${\bf BG}$; (iv) a domestic ${\bf mBT}$, then
	\begin{enumerate}
		\item [$(1.1)$] $\gr(A)$ is of finite representation type if and only if $\gr(B)$ is of finite representation type.
		\item [$(1.2)$] the following are equivalent:
		\begin{enumerate}
			\item [$(1.2.1)$] 	$\gr(A)$ is domestic.
			\item [$(1.2.2)$]  $\gr(A)$ is of polynomial growth.
			\item [$(1.2.3)$] $\gr(B)$ is  domestic.
			\item [$(1.2.4)$]  $\gr(B)$ is of polynomial growth.
		\end{enumerate}
\end{enumerate}
\item If $\Ga/\ang{\nu}$ does not belong to the four cases in $(1)$, then both $\gr(A)$ and $\gr(B)$ are nondomestic and not of polynomial growth.
\end{enumerate}
\end{Thm}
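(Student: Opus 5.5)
The plan is to transfer representation type along Galois coverings, using Corollary \ref{main-cor} together with the appendix result that a special biserial algebra is domestic if and only if it is of polynomial growth. We split according to whether $\Ga/\ang{\nu}$ is a ${\bf BG}$.

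\emph{Case 1: $\Ga/\ang{\nu}$ is a ${\bf BG}$.} Then $\ang{\nu}$ is admissible and $R_\Ga=\Ga/\ang{\nu}$. By \cite[Lemma 3.36]{LL2} and \cite[Lemma 5.12]{LL2} there is a Galois covering of quivers with relations $(Q_A,I_A)\to(Q_B,I_B)$. Corollary \ref{main-cor} then gives that $\gr(A)$ is representation-finite if and only if $\gr(B)$ is, and that $\gr(A)$ is domestic if and only if $\gr(B)$ is.

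\emph{Case 2: $\Ga/\ang{\nu}$ is not a ${\bf BG}$.} Theorem \ref{second-covering} provides a ${\bf bFBGA}$ $C$ with Galois coverings $(Q_C,I_C)\to(Q_A,I_A)$ and $(Q_C,I_C)\to(Q_B,I_B)$. Applying Corollary \ref{main-cor} twice, through $\gr(C)$, shows that $\gr(A)$ and $\gr(B)$ are simultaneously representation-finite and simultaneously domestic.

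In both cases this yields (1.1) and the equivalence (1.2.1)$\Leftrightarrow$(1.2.3). Since $\gr(A)$ and $\gr(B)$ are special biserial by Lemma \ref{quiver-and-relations}, the appendix theorem gives (1.2.1)$\Leftrightarrow$(1.2.2) and (1.2.3)$\Leftrightarrow$(1.2.4). Note that this argument does not actually use the hypothesis in (1); that hypothesis matters only for part (2).

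For (2), we show that if $\Ga/\ang{\nu}$ lies outside the four cases, then $A$ is neither representation-finite nor domestic, by Theorems \ref{Thm-finite-f-BG} and \ref{Thm-domestic-f-BG}. By \cite[Theorem 2.35]{LL3}, $B$ is then also neither representation-finite nor domestic. The key step is to show that nondomesticity of $A$ forces nondomesticity of $\gr(A)$. For this we use the string algebras of Subsection \ref{sec-string-alg-of-bFBGA}: $\overline{A}\cong\overline{\gr(A)}/I_3$ is a quotient of $\overline{\gr(A)}$.

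\begin{itemize}
\item Every string of $\overline{A}$ is a string of $\overline{\gr(A)}$, since $I_2\subseteq I_1$.
\item Hence every band of $\overline{A}$ is a band of $\overline{\gr(A)}$.
\item Inequivalent bands of $\overline{A}$ remain inequivalent in $\overline{\gr(A)}$, because $\sim_A$ is defined purely on words.
\end{itemize}

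So $|\mathrm{Ba}(\overline{\gr(A)})|\geq|\mathrm{Ba}(\overline{A})|$. Since $A$ is representation-infinite and nondomestic, $\mathrm{Ba}(\overline{A})$ is infinite by the Erdmann--Skowro\'nski criterion \cite{ES}, and therefore so is $\mathrm{Ba}(\overline{\gr(A)})$.

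To conclude that $\gr(A)$ is nondomestic, we invoke the appendix theorem: a special biserial algebra is domestic exactly when its band set is finite and nonempty, equivalently when it is of polynomial growth. Thus $\gr(A)$ is nondomestic and not of polynomial growth. The same argument applies verbatim to $B$.

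The main obstacle is this last implication. One must justify that infinitely many inequivalent bands in the string algebra $\overline{\gr(A)}$ excludes both domesticity and polynomial growth. Here I would rely on the appendix characterization (Theorem \ref{Theorem}), which is stated for arbitrary special biserial algebras and is not restricted to self-injective ones.
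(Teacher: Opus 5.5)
Your proposal is correct and follows essentially the same route as the paper. It applies Corollary \ref{main-cor} via the covering $\Ga\to\Ga/\ang{\nu}$ when $\Ga/\ang{\nu}$ is a ${\bf BG}$, and via the ${\bf bFBGA}$ $C$ of Theorem \ref{second-covering} otherwise. It uses Theorem \ref{Theorem} for domestic versus polynomial growth, and for part $(2)$ the fact that $\overline{A}$ is a quotient of $\overline{\gr(A)}$.

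Your explicit band injection $\mathrm{Ba}(\overline{A})\hookrightarrow\mathrm{Ba}(\overline{\gr(A)})$, coming from $I_2\subseteq I_1$, simply spells out the paper's one-line quotient argument. Your remark that the covering step does not need the hypothesis of $(1)$ is accurate.
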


\begin{proof}
Since $A$ and $\overline{A}$ (resp. $\gr(A)$ and $\overline{\gr(A)}$) have the same representation type, and since $\overline{A}$ is a quotient of the algebra $\overline{\gr(A)}$, if $A$ is nondomestic (resp. $A$ is not of polynomial growth), then $\gr(A)$ is nondomestic (resp. $\gr(A)$ is not of polynomial growth). By Theorem \ref{Thm-finite-f-BG} and Theorem \ref{Thm-domestic-f-BG}, there are two situations to be considered.

If $\Ga/\ang{\nu}$ is a ${\bf BG}$, then by  \cite[Lemma 3.36 and Lemma 5.12]{LL2} and Corollary \ref{main-cor}, $\gr(A)$ is of finite representation type (resp. domestic) if and only if $\gr(B)$ is of finite representation type (resp. domestic).

If $\Ga/\ang{\nu}$ is a representation-finite or domestic ${\bf mBT}$, then by Theorem \ref{second-covering} and Corollary \ref{main-cor}, there is a ${\bf bFBGA}$ $C$ such that $\gr(A)$ or $\gr(B)$ is of finite representation type (resp. domestic) if and only if $\gr(C)$ is of finite representation type (resp. domestic).

In either case, $\gr(A)$ is of finite representation type (resp. domestic) if and only if $\gr(B)$ is of finite representation type (resp. domestic). Moreover, if $\Ga/\ang{\nu}$ does not belong to the above four cases, then both $A$ and $B$ are nondomestic and therefore both $\gr(A)$ and $\gr(B)$ are nondomestic.

Finally, by Theorem \ref{Theorem}, we have that $\gr(A)$ (resp. $\gr(B)$) is domestic if and only if $\gr(A)$ (resp. $\gr(B)$) is of polynomial growth.
\end{proof}

We illustrate the above theorem by the following example.

\begin{Ex}
\begin{enumerate}[$(1)$]
	\item If $\Ga$ is as in Example \ref{Example-BT} $(1)$, then by Proposition \ref{isomorphism}, $A$ is isomorphic to $\gr(A)$. By Theorem \ref{Thm-domestic-f-BG}  and Example \ref{Example-reduced-graph}, $A\cong\gr(A)$ is domestic.
	\item  If $\Ga$ is  as in Example \ref{Example-BT} $(2)$ (resp. Example \ref{Example-BT} $(3)$), then by Theorem \ref{Thm-finite-f-BG} and Example \ref{Example-reduced-graph}, $A$ is of finite representation type. By Example \ref{Example-reduced-graph}, \cite[Theorem 3.18]{GLY} (resp. \cite[Theorem 4.5]{GL}) and Theorem \ref{Thm-rep-typ}, we have that $\gr(A)$ is domestic (resp. $\gr(A)$  is of finite representation type).
\end{enumerate}
\end{Ex}

\section{Construction of biserial fractional Brauer graph algebras whose associated graded algebras are representation-finite or domestic}\label{sec-construct}
The characterization of representation-finite and domestic graded ${\bf BGA}$s in terms of the associated ${\bf BG}$s was given in \cite{GL} and \cite{GLY} respectively. In this section, we similarly describe the representation type of graded ${\bf bFBGA}$ $\gr(A_{\Gamma})$ in terms of the quotient $\Ga/\ang{\nu}$ (see Theorem \ref{thm}). Furthermore, we characterize representation-finite and domestic graded ${\bf bFBGA}$ $\gr(A_{\Gamma})$ in terms of the associated ${\bf bFBG}$ $\Ga$ (see Propositions \ref{rep-finite-(a)}-\ref{domestic-(e)}). Since we use the same method as in \cite[Section 5]{LL3}, we merely state these results without proofs. We will also determine the number $n$ if $\gr(A_{\Gamma})$ is $n$-domestic except when $\Ga/\ang{\nu}$ is a multiplicity-free ${\bf BG}$ with a unique cycle, where $\nu$ is the Nakayama automorphism of $\Ga$.

We first define the graded degree and the unbalanced edge for an ${\bf mBT}$ (see Definition \ref{def-brauer-tree-with-orbifolds}) as we do for a ${\bf bFBG}$ in Definitions \ref{graded-degree} and \ref{unbalanced-edge} respectively.

\begin{Def}\label{graded-degree-modified-Brauer-tree}
	Let $\Ga=(V(\Ga),E(\Ga),O(\Ga),\bm,\mathfrak{o})$ be an ${\bf mBT}$. For each vertex $v$ in $\Ga$, we denote by $\val(v)$ the valency of $v$, that is, the number of half-edges (both isolated and non-isolated) incident to $v$. If $\bm(v)\val(v)=1$ such that $v$ is connected to another vertex $v'$ by an edge, then we define the graded degree $\grd(v)$ of $v$ to be $\bm(v')\val(v')$. Otherwise, we define the graded degree $\grd(v)$ of $v$ to be $\bm(v)\val(v)$.
\end{Def}

\begin{Def}\label{unbalanced-edge-modified-Brauer-tree} Let $\Ga$ be an ${\bf mBT}$ with graded degree function $\grd$. We call an edge $v_{1}\defeqi v_{2}$ in $\Ga$ with $ \grd(v_{1})\neq \grd(v_{2})$ an unbalanced edge, and denote the endpoints of an unbalanced edge $i$ by $v^{(i)}_L$, $v^{(i)}_S$ with $\grd(v^{(i)}_L)>\grd(v^{(i)}_S)$, whenever the context is clear, we will omit the superscript of $v^{(i)}_L$ or $v^{(i)}_S$.
\end{Def}

We now generalize some notions for a ${\bf BG}$ from \cite{GL} to the situation of an ${\bf mBT}$.

\begin{Def}[compare with {\cite[Definition 3.7]{GL}}]\label{walk-in-BG}
Let $\Ga$ be a ${\bf BG}$ or an ${\bf mBT}$, and let $u,v$ be two distinct vertices in $\Ga$. A walk from $u$ to $v$ is a sequence $[v_{1},a_{1},v_{2},\ldots, v_{k-1}, a_{k-1},v_{k}]$ of vertices and edges, where $v_{1}=u$, $v_{k}=v$, $a_i$ is an edge incident to the vertices $v_i$ and $v_{i+1}$ for each $1\leq i\leq k-1$, and all vertices (and hence all edges) are pairwise distinct. The length of a walk is defined as the number of edges in this walk.
\end{Def}

Note that the above notion of walk should not be confused with the same notion defined in \cite[Subsection 3.2]{LL2}. Note also that if $\Ga$ is an ${\bf mBT}$, then for every two vertices $u,v$ in $\Ga$, there exists a unique walk $w$ from $u$ to $v$, and $d_{\Ga}(u,v)$ denotes the length of $w$ (that is, the number of edges in $w$).

We say that a walk $[v_1,a_1,v_2,...,v_{k-1},a_{k-1},v_k]$ is degree decreasing if $\grd(v_1)\geq\grd(v_2)\geq\cdots\geq\grd(v_k)$.

\begin{Def} [see {\cite[Definition 3.14]{GLY}}]\label{two-numbers}
Let $\Ga$ be an ${\bf mBT}$ with an exceptional vertex $v_0$.
\begin{enumerate}[$(1)$]
\item Define $\kappa_0$ to be the number of unbalanced edges $v_{S}\defeqi v_{L}$ such that $d_{\Ga}(v_0,v_S)+1=d_{\Ga}(v_0,v_L)$.
\item Define $\kappa_1$ to be  the number of unbalanced edge pairs in $\Ga$, where an unbalanced edge pair is a set  $\{i,j\}$ of two unbalanced edges $v_{S}^{(i)}\defeqi v_{L}^{(i)}$ and $v_{S}^{(j)}\defeqj v_{L}^{(j)}$ satisfying $d_{\Ga}(v_{S}^{(i)},v_{S}^{(j)})+2=d_{\Ga}(v_{L}^{(i)},v_{L}^{(j)})$.
\end{enumerate}
\end{Def}

We recall from \cite{GL} and \cite{GLY} the description of ${\bf BGA}$s whose associated graded algebras are representation-finite or domestic.

\begin{Thm} [see {\cite[Theorem 4.5]{GL}}]\label{grA-finite-type}
Let $A$ be the ${\bf BGA}$ associated with a ${\bf BG}$ $\Ga$ and $\mathrm{gr}(A)$ the graded algebra associated with the radical filtration of $A$. Then $\gr(A)$ is representation-finite if and only if the following two conditions are satisfied:
\begin{enumerate}[$(1)$]
\item $\Ga$ is a ${\bf BT}$ with an exceptional vertex $v_0$ of multiplicity $\bm_0$;
\item Denote by $v_1$ the exceptional vertex $v_0$ when $\bm_0>1$ or one of the vertices with maximal graded degree when $\bm_0=1$. Then any walk from $v_1$ is degree decreasing.
\end{enumerate}
\end{Thm}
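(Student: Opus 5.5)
This statement is Theorem 4.5 of \cite{GL}, so the task is to sketch how one would re-derive it. The plan works with the string algebra $\overline{\gr(A)}=\bk Q/I_2$. It uses the fact that $\gr(A)$ and $\overline{\gr(A)}$ have the same representation type, and that a string algebra is representation-finite if and only if it has no bands.

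\emph{Necessity of condition (1).} Since $\overline{A}$ is a quotient of $\overline{\gr(A)}$, every band of $\overline{A}$ is a band of $\overline{\gr(A)}$. Hence if $\gr(A)$ is representation-finite, so is $A$, and by Theorem \ref{Brauer-graph-domestic}(a) $\Ga$ must be a ${\bf BT}$.

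\emph{Reduction for the second condition.} From now on assume $\Ga$ is a ${\bf BT}$ with exceptional vertex $v_0$. Every walk in $\Ga$ is then the unique walk between its endpoints. The bands of $\overline{A}$ are empty, so every band of $\overline{\gr(A)}$ must pass through at least one of the paths $r_i\in\mathbb{P}$. Recall that $\mathbb{P}$ is in bijection with the unbalanced edges (Proposition \ref{isomorphism}). For an unbalanced edge $v_S\defeqi v_L$, the longer path $r_i$ runs around $v_L$, and the shorter path $q_i$ around $v_S$ becomes zero in $\gr(A)$.

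\emph{Local analysis of a band.} A band in $\overline{\gr(A)}$ alternates between direct and inverse substrings. Each direct substring lies inside a cycle at some vertex of $\Ga$, and each change of direction happens at an edge of $\Ga$. Following the band in $\Ga$, one gets a closed walk in the tree, which must therefore turn back. Turning back through an edge $i$ is permitted only when $r_i$ is nonzero while $q_i$ is killed. In the tree this means the band travels from $v_L$ to $v_S$ and then must return along a path through a vertex of degree at least as large. The key step is a combinatorial lemma: $\overline{\gr(A)}$ has a band if and only if there exist two unbalanced edges oriented ``against each other''. Equivalently, there is an unbalanced edge pair, or an unbalanced edge $v_S\defeqi v_L$ whose $v_S$ lies nearer to $v_1$, i.e.\ $\kappa_1>0$ or $\kappa_0>0$ when measured from $v_1$. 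When $\bm_0>1$, the exceptional vertex also acts as a turning point, because its direct substring may wind around it more than once; this is why $v_1$ is taken to be $v_0$ in that case.

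\emph{Conclusion and main obstacle.} The lemma gives the theorem. Every walk from $v_1$ is degree decreasing exactly when no unbalanced edge is directed towards $v_1$, and in a tree this also excludes unbalanced pairs. When $\bm_0=1$ the choice of maximal-degree vertex is immaterial: vertices of maximal degree are joined through vertices of equal degree, or the condition fails anyway. The main obstacle is the ``if'' direction, namely showing that every unbalanced edge oriented towards $v_1$ actually produces a band. I would attempt this by explicitly constructing a closed string. It goes from $v_L$ of the offending edge back towards the maximal-degree vertex $v_1$, uses $r_i$ at one end and the full cycle around $v_1$ (or around $v_0$ when $\bm_0>1$) at the other end, and one then checks that its powers avoid all relations in $I_2$.
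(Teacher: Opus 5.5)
The paper contains no proof of this statement. It is quoted from \cite{GL}, and the paper uses it in the equivalent form $\kappa_0(\bm_0-1)+\kappa_1=0$ (condition $(1a)$ of Theorem \ref{thm}). Several parts of your reduction are correct:
\begin{itemize}
\item necessity of $(1)$, via $I_2\subseteq I_1$;
\item restricting attention to bands of $\overline{\gr(A)}$;
\item the observation that ``every walk from $v_1$ is degree decreasing'' means no unbalanced edge has its $v_S$ nearer to $v_1$, and that this forces $\kappa_1=0$.
\end{itemize}

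\textbf{The band construction fails when $\bm_0=1$.} This is exactly the step you single out as the main obstacle. You want to turn at $v_1$ using ``the full cycle around $v_1$'' at the edge $e$ leading towards the offending edge.
\begin{itemize}
\item Since $\bm(v_1)=1$, that cycle is $P_{v_1}$ at $e$, i.e.\ one of $p_e,q_e$.
\item If $e$ is balanced, then $e\in\mathbb{L}'$ and both $p_e,q_e$ lie in $I_2$. If $v_1$ is a leaf, there is no cycle at all.
\item Maximality of $\grd(v_1)$ does not make $e$ unbalanced.
\end{itemize}
For a counterexample, take a multiplicity-free Brauer tree containing a path $v_1,a,b,c$, where $v_1$ and $c$ each carry two further leaves and $a$ carries one. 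Then $\grd(v_1)=\grd(a)=\grd(c)=3$ and $\grd(b)=2$. The walk from $v_1$ to $c$ is not degree decreasing, and the offending edge joins $b$ and $c$. But your closed word contains $P_{v_1}$ at the balanced edge joining $v_1$ and $a$, which is zero. The band that actually exists turns at $a$ via $r$ of the edge $ab$, and at $c$ via $r$ of the edge $bc$.

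The general fix: since $\grd(v_1)$ is maximal, along the geodesic from $v_1$ the degree must strictly drop at some unbalanced edge $i$ before it rises at the offending edge $j$. Then $\{i,j\}$ is an unbalanced edge pair. Shuttling between $v_L^{(i)}$ and $v_L^{(j)}$ gives the band, turning with $r_i$ and $r_j$ and using paths of length $<\val$ around intermediate vertices. So for $\bm_0=1$ bands come only from $\kappa_1$. Only when $\bm_0>1$ can one turn at $v_0$, with a cycle of length $k\val(v_0)$ for $1\le k\le \bm_0-1$; such a cycle is already nonzero in $\overline{A}$. This is also why your first formulation, ``two unbalanced edges oriented against each other'', is false when $\bm_0>1$.

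\textbf{The other half of your lemma is missing an argument.} ``No band under condition $(2)$'' is only gestured at; returning ``through a vertex of degree at least as large'' does not prove it. Three facts are needed:
\begin{itemize}
\item A band piece around $v$ that enters and leaves through the same edge $i$ can occur in a string of $\overline{\gr(A)}$ if and only if either $v=v_0$ with $\bm_0>1$, or $i$ is unbalanced with $v=v_L^{(i)}$.
\item A closed walk in a tree has at least two such turns.
\item Between consecutive turns $(v,i)$ and $(w,j)$, the walk is the geodesic from $v$ to $w$, beginning with $i$ and ending with $j$.
\end{itemize}
Two turns of the second kind then give an unbalanced edge pair. A turn at $v_0$ followed by one at $v_L^{(j)}$ gives an edge counted in $\kappa_0$. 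Condition $(2)$ excludes both, by your projection remark.

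With these repairs your route establishes the band criterion $\kappa_0(\bm_0-1)+\kappa_1\neq 0$, which is the form of the result the paper actually relies on.
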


\begin{Thm} [see {\cite[Theorem 3.18]{GLY}}]\label{grA-infinite-type}
Let $A$ be the ${\bf BGA}$ associated with a ${\bf BG}$ $\Ga=(V(\Ga),E(\Ga),\bm)$ and $\mathrm{gr}(A)$ the graded algebra associated with the radical filtration of $A$, where $V(\Ga)$ is the vertex set, $E(\Ga)$ is the edge set and $\bm$ is the multiplicity function of $\Ga$. Then $\gr(A)$ is domestic if and only if $\gr(A)$ is of polynomial growth. Moreover,
\begin{enumerate}[$\bullet$]
\item $\gr(A)$ is $1$-domestic if and only if one of the following holds:
\begin{itemize}
 \item[$(1)$] $\Ga$ is a ${\bf BT}$ with an exceptional vertex $v_0$ of multiplicity $\bm_0$ such that $\kappa_0(\bm_0-1)+\kappa_1=1$.
 \item[$(2)$] $\Ga$ is a tree and there exist two distinct vertices $w_0$, $w_1$, such that the following conditions hold:
 \begin{itemize}
 \item[$(2.1)$] $\bm(w_0)=\bm(w_1)=2$ and $\bm(v)=1$ for $v\neq w_0,w_1$,
 \item[$(2.2)$] $\grd(w_0)=\grd(w_1)$,
 \item[$(2.3)$] Any walk from $w_0$ (or from $w_1$) is degree decreasing.
 \end{itemize}
 \item[$(3)$] $\Ga$ is a multiplicity-free ${\bf BG}$ with a unique cycle of odd length, and satisfies the
 following conditions:
 \begin{itemize}
 \item[$(3.1)$] $\grd(u)=\grd(v)$ for any two vertices $u$ and $v$ in the unique cycle,
 \item[$(3.2)$] Any walk from any vertex in the unique cycle is degree decreasing.
 \end{itemize}
 \end{itemize}
 \item  $\gr(A)$ is $2$-domestic if and only if $\Ga$ satisfies the following conditions:
 \begin{itemize}
 \item[$(1)$] $\Ga$ is a multiplicity-free ${\bf BG}$ with a unique cycle of even length,
 \item[$(2)$] $\grd(u)=\grd(v)$ for any two vertices $u$ and $v$ in the unique cycle,
 \item[$(3)$] Any walk from any vertex in the unique cycle is degree decreasing.
 \end{itemize}
 \item  $\gr(A)$ is not $n$-domestic for $n\geq 3$.
\end{enumerate}
\end{Thm}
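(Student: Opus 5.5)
The plan is to work throughout with the string algebras of Subsection~\ref{sec-string-alg-of-bFBGA}. Since $\gr(A)$ and $\overline{\gr(A)}$ have the same representation type, and $\overline{\gr(A)}$ is a string algebra, the representation type is governed by $\mathrm{Ba}(\overline{\gr(A)})$:
\begin{enumerate}[$(a)$]
\item $\gr(A)$ is representation-finite if and only if there are no bands;
\item $\gr(A)$ is $n$-domestic if and only if $|\mathrm{Ba}(\overline{\gr(A)})|=n$;
\item $\gr(A)$ is nondomestic if and only if there are infinitely many bands, equivalently two bands can be concatenated into arbitrarily many new ones.
\end{enumerate}
The equivalence of domestic and polynomial growth then follows directly from the appendix result (Theorem~\ref{Theorem}) for special biserial algebras, so only the counting statements need work.

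\textbf{Step 1 (reduction to the Bocian--Skowro\'nski list).} Since $\overline{A}=\overline{\gr(A)}/I_3$, every band of $\overline{A}$ is a band of $\overline{\gr(A)}$. Hence if $A$ is nondomestic, so is $\gr(A)$. By Theorem~\ref{Brauer-graph-domestic}, $\Ga$ must therefore be a ${\bf BT}$, or one of the 1-domestic or 2-domestic ${\bf BG}$s listed there. Moreover, the bands of $\overline{\gr(A)}$ split into the old bands, namely those of $\overline{A}$, and the new bands, which are exactly those containing some longer path $r_i\in\mathbb{P}$ (or its inverse) as a subword. By Proposition~\ref{isomorphism}, these $r_i$ correspond bijectively to the unbalanced edges.

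\textbf{Step 2 (Brauer tree case).} Here $\overline{A}$ has no bands, so every band is new. I would describe a new band combinatorially. A directed subword $r_i$ corresponds to going all the way around $v_L$ at the unbalanced edge $i$. A closed string must turn back at two such places, or at one place and at the exceptional vertex, where it winds $1,\dots,\bm_0-1$ times. Using the tree structure and the graded-degree inequalities on the unique walks, this should show:
\begin{itemize}
\item each unbalanced edge with $v_L$ farther from $v_0$ than $v_S$ yields exactly $\bm_0-1$ band classes;
\item each unbalanced edge pair (as in Definition~\ref{two-numbers}) yields exactly one band class;
\item no other new bands occur.
\end{itemize}
This gives $|\mathrm{Ba}|=\kappa_0(\bm_0-1)+\kappa_1$. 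The remaining claim is that whenever this number is at least $2$, two of these bands share a vertex in a way that lets them be concatenated, giving infinitely many bands. This yields case $(1)$ of the $1$-domestic list and rules out $n\geq 2$ in the tree case.

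\textbf{Step 3 (domestic Brauer graph cases).} In the cases $(b)(1),(b)(2),(c)$ of Theorem~\ref{Brauer-graph-domestic}, $\overline{A}$ already has $1$ or $2$ bands, running around the two multiplicity-$2$ vertices or around the cycle. I would show that $\gr(A)$ is domestic exactly when no new band exists. I expect the stated conditions (equal $\grd$ on $w_0,w_1$ or along the cycle, and degree-decreasing walks) to be precisely what kills all unbalanced edges that could be traversed by a closed string. Conversely, any new band meets an old band, so their concatenations give infinitely many bands. In the domestic situation the old bands survive unchanged, so the count stays $1$ or $2$, and $n\geq 3$ never occurs.

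\textbf{Main obstacle.} I expect the hardest step to be the exact band classification in Step~2, in particular proving that no other new bands exist and that two bands always combine into infinitely many when there are at least two. I would first treat a single unbalanced edge by hand, and then argue by induction on the number of edges of the tree, deleting a leaf edge with balanced degrees.
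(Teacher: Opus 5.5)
The paper does not reprove this theorem; it quotes it from \cite{GLY}. What the paper itself contributes is Appendix~\ref{appendix} (Theorem~\ref{Theorem}, Proposition~\ref{number-of-band-and-representation-type}), and you use it correctly for (a)--(c) and for ``domestic $\Leftrightarrow$ polynomial growth''. Your Step~1 is also fine: bands of $\overline{A}$ survive in $\overline{\gr(A)}$, new bands are exactly those containing some $r_i^{\pm1}$, and $\Ga$ must be a ${\bf BT}$ or a domestic ${\bf BG}$.

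The gap is that the combinatorial core is asserted rather than proved, and the one precise form you state is false. In Step~2 you claim that no bands other than the listed ones occur, so that $|\mathrm{Ba}(\overline{\gr(A)})|=\kappa_0(\bm_0-1)+\kappa_1$. You also claim that for values $\geq 2$ there are infinitely many bands. These two claims contradict each other.

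Even restricted to bands with two turning points, the count is wrong. When a string passes through $v_0$ from one edge to another, it may wind $0,\dots,\bm_0-1$ extra times. So an unbalanced pair whose connecting walk passes through $v_0$ gives several inequivalent bands. For example, take $\bm_0=2$, $\val(v_0)=2$, and both neighbours of $v_0$ of valency $5$ with pendant leaves; this pair alone gives bands of lengths $12,14,16$. This does not affect the thresholds, since both edges of such a pair are counted in $\kappa_0$, but it shows the classification has not actually been done.

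What Step~2 needs is the following.
\begin{enumerate}[(i)]
\item Every band in the tree case is a closed walk determined by a cyclic sequence of turning points. Each turning point is either $v_0$ (winding $t\val(v_0)$ with $1\le t\le\bm_0-1$) or some $v_L^{(i)}$ entered and left along $i$ itself via $r_i^{\pm1}$. Consequently each pair of consecutive turning points is a $\kappa_0$-configuration or an unbalanced pair; turning at $v_0=v_L^{(a)}$ via $r_a$ counts as a pair.
\item Hence if $\kappa_0(\bm_0-1)+\kappa_1\le 1$, there is at most one band.
\item If the value is $\geq 2$, a tree lemma must produce two bands with a common \emph{top} vertex. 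Useful facts: every pair contains an edge counted in $\kappa_0$; two $\kappa_0$-edges not on a common ray from $v_0$ form a pair; two disjoint pairs yield a cross pair meeting both. Only after this does the splicing from the proof of Theorem~\ref{Theorem} apply. Merely sharing a vertex of $\Ga$ does not let you concatenate two bands.
\end{enumerate}

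Step~3 has the same problem. The equivalence between ``no new band'' and the stated conditions is only ``expected''. This concerns (2.2)--(2.3), and (3.1)--(3.2) together with the $2$-domestic conditions. Also, ``any new band meets an old band'' must be replaced by constructing a new band that shares a top vertex with an old one.

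The direction ``a condition fails $\Rightarrow$ a new band exists'' is not automatic. The path $r_i$ can only be used after arriving at $v_L^{(i)}$ along $i$, so a violation does not directly give a turning point.
\begin{itemize}
\item For unbalanced edges on the cycle, you need an arc of minimal graded degree bounded by two unbalanced edges pointing away from it. This gives a back-and-forth band along the arc.
\item For an unbalanced edge off the cycle pointing away from it, you need a band that runs once or twice around the cycle, according to the parity of the cycle length.
\end{itemize}
With these lemmas your outline becomes a proof. Without them, the specific conditions in the statement ($\kappa_0,\kappa_1$, equal graded degrees, degree-decreasing walks) are never actually derived.
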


Combining the above results together with Theorem \ref{Thm-rep-typ}, we can describe the representation type of the associated graded algebras of ${\bf bFBGA}$s in terms of the quotient $\Ga/\ang{\nu}$ as follows.
\begin{Thm}\label{thm}
	Let $A$ be a ${\bf bFBGA}$ associated with some ${\bf bFBG}$  $\Ga$ and $\mathrm{gr}(A)$ the graded algebra associated with the radical filtration of $A$. Suppose that $\nu$ is the Nakayama automorphism of $\Ga$ and $\langle\nu\rangle$ is the group of automorphisms generated by  $\nu$.  		
	Then we have the following.
	\begin{enumerate}[$(1)$]
	\item 	$\gr(A)$ is of finite representation type if and only if one of the following holds.
		\begin{enumerate}
			\item[$(1a)$] $\Ga/\ang{\nu}$ is a ${\bf BT}$ with an exceptional vertex $v_0$ of multiplicity $\bm_0$ such that $\kappa_0(\bm_0-1)+\kappa_1=0$.
			\item[$(1b)$] $\Ga/\ang{\nu}$ is a representation-finite ${\bf mBT}$ (see Definition \ref{def-rep-finite-domestic-modified-Brauer-tree}) satisfying $\kappa_0=0$ and $\kappa_1=0$, in other words, $\Ga/\ang{\nu}$ is a representation-finite ${\bf mBT}$ such that any walk from $v_0$ is degree decreasing,  where $v_0$ is the exceptional vertex and it is also the vertex incident with the unique isolated half-edge in  $\Ga/\ang{\nu}$.
			\end{enumerate}
\item 	$\gr(A)$ is domestic if and only if one of the following holds.	
	\begin{enumerate}[$(2a)$]
	\item $\Ga/\ang{\nu}$ is a ${\bf BT}$ with an exceptional vertex $v_0$ of multiplicity $\bm_0$ such that $\kappa_0(\bm_0-1)+\kappa_1=1$.
	\item $\Ga/\ang{\nu}$ is a representation-finite ${\bf mBT}$ satisfying $\kappa_0=1$ and $\kappa_1=0$, where the exceptional vertex is the vertex incident with the unique isolated half-edge in  $\Ga/\ang{\nu}$.
	\item $\Ga/\ang{\nu}$ is a domestic ${\bf mBT}$ (see Definition \ref{def-rep-finite-domestic-modified-Brauer-tree}) such that the following conditions hold:
	\begin{enumerate}[$(c.1)$]
		\item $\grd(w_0)=\grd(w_1)$, where $w_0$ and $w_1$ are the vertices incident with the exactly two isolated half-edges in $\Ga/\ang{\nu}$.
		\item Any walk from $w_0$ (or from $w_1$) is degree decreasing.
	\end{enumerate}
	\item  $\Ga/\ang{\nu}$ is a ${\bf BG}$ whose underlying graph is a tree and there exist two distinct vertices $w_0,w_1$, such that the following conditions hold:
	\begin{enumerate}[$(2d.1)$]
		\item $\bm(w_0)=\bm(w_1)=2$ and $\bm(v)=1$ for $v\neq w_0,w_1$, where $\bm$ is the multiplicity function of ${\bf BG}$ $\Ga/\ang{\nu}$.
		\item $\grd(w_0)=\grd(w_1)$.
		\item Any walk from $w_0$ (or from $w_1$) is degree decreasing.
	\end{enumerate}
	\item $\Ga/\ang{\nu}$ is a multiplicity-free ${\bf BG}$ with a unique cycle, and satisfies the following conditions:	
	\begin{enumerate}[$(2e.1)$]
		\item $\grd(u)=\grd(v)$ for any two vertices $u$ and $v$ in the unique cycle.
		\item  Any walk from any vertex in the unique cycle is degree decreasing.
	\end{enumerate}
\end{enumerate}
\end{enumerate}
\end{Thm}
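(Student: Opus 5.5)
The proof reduces Theorem \ref{thm} to the known results for Brauer graph algebras. By Theorem \ref{Thm-rep-typ}, $\gr(A)$ is representation-finite (resp. domestic) if and only if $\gr(B)$ is, where $B$ is the ${\bf BGA}$ of the reduced form $R_{\Ga}$. This happens only when $\Ga/\ang{\nu}$ is one of the four cases (i)--(iv). Indeed, otherwise $A$ is nondomestic by Theorems \ref{Thm-finite-f-BG} and \ref{Thm-domestic-f-BG}, and hence so is $\gr(A)$. It therefore suffices to translate the criteria of Theorems \ref{grA-finite-type} and \ref{grA-infinite-type}, applied to the ${\bf BG}$ $R_{\Ga}$, into conditions on $\Ga/\ang{\nu}$. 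We split according to whether $O(\Ga/\ang{\nu})$ is empty.

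\emph{Case 1: $\Ga/\ang{\nu}$ is a ${\bf BG}$.} Then $R_{\Ga}=\Ga/\ang{\nu}$, and Theorems \ref{grA-finite-type} and \ref{grA-infinite-type} apply verbatim. For the finite case, condition (2) of Theorem \ref{grA-finite-type} must be rewritten as $\kappa_0(\bm_0-1)+\kappa_1=0$. This is the same rewriting by which the $1$-domestic condition (1) of Theorem \ref{grA-infinite-type} is phrased in \cite{GLY}, so I would cite that equivalence. This yields (1a), (2a), (2d), and (2e), where (2e) merges the odd and even cycle cases.

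\emph{Case 2: $\Ga/\ang{\nu}$ is a representation-finite or domestic ${\bf mBT}$.} Here $R_{\Ga}$ is obtained by gluing two copies $T, T'$ of $\Ga/\ang{\nu}$ along their isolated half-edges. Every vertex keeps its multiplicity (equal to $1$) and its valency, since an isolated half-edge becomes a genuine half-edge of the glued edge. The key lemma is that $\grd$ on $R_{\Ga}$ restricts on each copy to $\grd$ on $\Ga/\ang{\nu}$ as in Definition \ref{graded-degree-modified-Brauer-tree}. The one delicate point is a vertex with $\bm\val=1$ that is incident only to an isolated half-edge; after gluing, its neighbour is its own copy, which has the same value, so the definitions agree. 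Consequently the unbalanced edges of $R_{\Ga}$ are those of $T$, those of $T'$, and the glued edges, which are always balanced.

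\emph{Case 2, representation-finite ${\bf mBT}$.} In this subcase $R_{\Ga}$ is a multiplicity-free ${\bf BT}$, and we choose the vertex $v_0$ at the isolated half-edge. I would apply Theorem \ref{grA-finite-type} with $v_1=v_0$. This requires checking that, when the tree is degree decreasing from $v_0$, the vertex $v_0$ attains the maximum graded degree; that follows because $v_0$ and its copy are joined by a balanced edge. Any walk in $R_{\Ga}$ from $v_0$ either stays in $T$ or first crosses the glued edge (balanced) into $T'$. Hence degree-decreasing in $R_{\Ga}$ is equivalent to degree-decreasing from $v_0$ in $\Ga/\ang{\nu}$, which gives (1b).

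For domesticity in this subcase, use $\kappa_0(R_\Ga)\cdot 0+\kappa_1(R_\Ga)=1$ with $\bm_0=1$. Unbalanced edge pairs in $R_\Ga$ whose two edges lie in different copies correspond to single edges of $\Ga/\ang{\nu}$ pointing toward $v_0$ from the small side, counted by $\kappa_0(\Ga/\ang{\nu})$. Pairs within one copy are doubled, giving $2\kappa_1$. So $\kappa_1(R_\Ga)=\kappa_0+2\kappa_1$, and this equals $1$ exactly when $\kappa_0=1$ and $\kappa_1=0$, which is (2b).

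\emph{Case 2, domestic ${\bf mBT}$.} Now $R_{\Ga}$ is multiplicity-free with a unique even cycle passing through $w_0, w_0', w_1, w_1'$ and the two glued edges. Condition (2) of the $2$-domestic part of Theorem \ref{grA-infinite-type} on this cycle is equivalent to (c.1) together with the cycle walk from $w_0$ to $w_1$ in $T$ being of constant degree, and condition (3) then translates to (c.2). The equivalence of the cycle condition with (c.1)+(c.2) is the step I expect to need most care. I would argue that a degree-decreasing walk from $w_0$ to $w_1$ and one from $w_1$ to $w_0$, with $\grd(w_0)=\grd(w_1)$, force the degree to be constant along the path.
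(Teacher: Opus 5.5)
Your proposal is correct and follows the same route as the paper. You reduce via Theorem~\ref{Thm-rep-typ} to $\gr(B)$ for the ${\bf BGA}$ $B$ of $R_{\Ga}$, then translate Theorems~\ref{grA-finite-type} and~\ref{grA-infinite-type} case by case according to whether $\Ga/\ang{\nu}$ is a ${\bf BG}$ or an ${\bf mBT}$; you also spell out more than the paper does (e.g.\ the compatibility of $\grd$ with the gluing).

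There is one small slip: the cross-copy unbalanced edge pairs in $R_{\Ga}$ are the sets $\{i,j'\}$ with $i,j$ both counted by $\kappa_0$ (including $i=j$), so $\kappa_1(R_{\Ga})=\kappa_0^2+2\kappa_1$ rather than $\kappa_0+2\kappa_1$. This still vanishes exactly when $\kappa_0=\kappa_1=0$ and equals $1$ exactly when $\kappa_0=1,\kappa_1=0$, so your conclusions stand. Invoking this count is also the cleanest way to get the necessity half of (1b), since Theorem~\ref{grA-finite-type} for the multiplicity-free ${\bf BT}$ $R_{\Ga}$ amounts to $\kappa_1(R_{\Ga})=0$. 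Your walk argument there tacitly assumes $v_1=v_0$.
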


\begin{proof}
Denote by $B$ be the ${\bf BGA}$ associated with the reduced form $R_{\Gamma}$ of $\Ga$ and $\gr(B)$ the graded algebra associated with the radical filtration of $B$. It follows from Theorem \ref{Thm-rep-typ} that $\gr(A)$ is of finite representation type (resp. domestic) if and only if $\gr(B)$ is of finite representation type (resp. domestic).

Suppose that $\gr(A)$ is representation-finite. It follows that $A$ is also representation-finite. According to Theorem \ref{Thm-finite-f-BG}, $\Ga/\ang{\nu}$ belongs to one of the following two cases:
\begin{enumerate}[$(a)$]
	\item $\Ga/\ang{\nu}$ is a ${\bf BT}$.
	\item $\Ga/\ang{\nu}$ is a representation-finite ${\bf mBT}$.
\end{enumerate}
Since $\gr(B)$ is representation-finite, $R_{\Gamma}$ is a ${\bf BT}$ satisfying the two conditions in Theorem \ref{grA-finite-type}. In case $(a)$, since $\Ga/\ang{\nu}=R_{\Gamma}$, $\Ga/\ang{\nu}$ is a ${\bf BT}$ with an exceptional vertex $v_0$ of multiplicity $\bm_0$ such that $\kappa_0(\bm_0-1)+\kappa_1=0$. Then $\Ga/\ang{\nu}$ satisfies condition $(1a)$ in Theorem \ref{thm}. In case $(b)$, denote by $v_1$ the vertex that the unique isolated half-edge in $\Ga/\ang{\nu}$ is incident to. Since $R_{\Gamma}$ satisfies the second condition in Theorem \ref{grA-finite-type}, we have that the vertex $v_1$ has maximal graded degree, and any walk from $v_1$ is degree decreasing. So $\Ga/\ang{\nu}$ satisfies condition $(1b)$ in Theorem \ref{thm}.

Conversely, if $\Ga/\ang{\nu}$ satisfies either condition $(1a)$ or condition $(1b)$ in Theorem \ref{thm}, then $R_{\Ga}$ is a ${\bf BT}$ with an exceptional vertex $v_0$ of multiplicity $\bm_0$ such that $\kappa_0(\bm_0-1)+\kappa_1=0$, and it follows from Theorem \ref{grA-finite-type} that $\gr(B)$ is of finite representation type. Therefore $\gr(A)$ is also of finite representation type.

Now suppose that $\gr(A)$ is domestic. Then $A$ is representation-finite or domestic. According to Theorems \ref{Thm-finite-f-BG} and \ref{Thm-domestic-f-BG}, $\Ga/\ang{\nu}$ belongs to one of the following five cases:
\begin{enumerate}[$(a)$]
        \item $\Ga/\ang{\nu}$ is a ${\bf BT}$.
	    \item $\Ga/\ang{\nu}$ is a representation-finite ${\bf mBT}$.
		\item $\Ga/\ang{\nu}$ is a domestic ${\bf mBT}$.
        \item $\Ga/\ang{\nu}$ is a ${\bf BG}$ whose underlying graph is a tree, such that there are exactly two vertices with multiplicity $2$ and others with multiplicity $1$.
		\item $\Ga/\ang{\nu}$ is a multiplicity-free ${\bf BG}$ with a unique cycle.
	\end{enumerate}

In case $(a)$ we have $R_{\Ga}=\Ga/\ang{\nu}$. Since $\gr(B)$ is domestic, according to Theorem \ref{grA-infinite-type}, $\Ga/\ang{\nu}$ is a ${\bf BT}$ with an exceptional vertex $v_0$ of multiplicity $\bm_0$ such that $\kappa_0(\bm_0-1)+\kappa_1=1$. Then $\Ga/\ang{\nu}$ satisfies condition $(2a)$ in Theorem \ref{thm}. In case $(b)$, since the reduced form $R_{\Gamma}$ of $\Ga$ is a multiplicity-free ${\bf BT}$, according to Theorem \ref{grA-infinite-type}, $R_{\Gamma}$ is a ${\bf BT}$ satisfying $\kappa_1=1$. Choose the vertex that the unique isolated half-edge in $\Ga/\ang{\nu}$ is incident to as the exceptional vertex in $\Ga/\ang{\nu}$, then $\Ga/\ang{\nu}$ is a representation-finite ${\bf mBT}$ such that $\kappa_0=1$ and $\kappa_1=0$. Then $\Ga/\ang{\nu}$ satisfies condition $(2b)$ in Theorem \ref{thm}.

In case $(c)$, denote by $o_0$ and $o_1$ the exactly two isolated half-edges in $\Ga/\ang{\nu}$, and denote by $w_i$ the vertex that $o_i$ is incident to for $i=0,1$. Since $\gr(B)$ is domestic and since $R_{\Ga}$ is a multiplicity-free ${\bf BG}$ with a unique cycle of even length, by Theorem \ref{grA-infinite-type} we have that any two vertices in the unique cycle of $R_{\Ga}$ have the same graded degree, and any walk from any vertex in the unique cycle of $R_{\Ga}$ is degree decreasing. It follows that $\grd(w_0)=\grd(w_1)$ and any walk from $w_0$ (or from $w_1$) is degree decreasing. Therefore $\Ga/\ang{\nu}$ satisfies condition $(2c)$ in Theorem \ref{thm}.

In case $(d)$, denote by $w_0$ and $w_1$ the exactly two vertices with multiplicity $2$ in $\Ga/\ang{\nu}$. Since $R_{\Ga}=\Ga/\ang{\nu}$ and since $\gr(B)$ is domestic, by Theorem \ref{grA-infinite-type}, $\grd(w_0)=\grd(w_1)$ and any walk from $w_0$ (or from $w_1$) is degree decreasing. Then $\Ga/\ang{\nu}$ satisfies condition $(2d)$ in Theorem \ref{thm}. In case $(e)$, since $R_{\Ga}=\Ga/\ang{\nu}$ and since $\gr(B)$ is domestic, according to Theorem \ref{grA-infinite-type}, $\Ga/\ang{\nu}$ is a multiplicity-free ${\bf BG}$ with a unique cycle such that $\grd(u)=\grd(v)$ for any two vertices $u$ and $v$ in the unique cycle and any walk from any vertex in the unique cycle is degree decreasing. Thus $\Ga/\ang{\nu}$ satisfies condition $(2e)$ in Theorem \ref{thm}.

Conversely, if $\Ga/\ang{\nu}$ satisfies one of the conditions $(2a)$-$(2e)$ in Theorem \ref{thm}, then the reduced form $R_{\Ga}$ of $\Ga$ satisfies one of the conditions in Theorem \ref{grA-infinite-type}, which implies that $\gr(B)$ is domestic. Then $\gr(A)$ is also domestic.
\end{proof}

\subsection{Representation-finite case}\label{subsec-representation-finite-case}
\

In this subsection we suppose that $\Ga$ is a ${\bf bFBG}$ with Nakayama automorphism $\nu$ such that $\gr(A_{\Gamma})$ is representation-finite, and we denote by $r$ the order of $\nu$. It follows that $\Ga/\ang{\nu}$ satisfies either condition $(1a)$ or condition $(1b)$ in Theorem \ref{thm}. It can be shown that in both cases $\Ga$ is determined by $\Ga/\ang{\nu}$ together with the order $r$ of the Nakayama automorphism $\nu$ of $\Ga$ up to isomorphism. Now we construct $\Ga$ by using $\Ga/\ang{\nu}$ and $r$.

\medskip
{\it Case 1: $\Ga/\ang{\nu}$ satisfies condition $(1a)$ in Theorem \ref{thm}.}

Let $\Ga/\ang{\nu}=(V(\Ga/\ang{\nu}),E(\Ga/\ang{\nu}),\bm,\mathfrak{o})$, and let $v_0$ be the exceptional vertex in $\Ga/\ang{\nu}$ with multiplicity $\bm_0$. Then the order $r$ of the Nakayama automorphism $\nu$ of $\Ga$ satisfies $(r,\bm_0)=1$, where $(r,\bm_0)$ is the greatest common divisor of $r$ and $\bm_0$. Denote by $\Ga^{(a)}_{r}$ the ${\bf bFBG}$ defined as follows. The vertices set $V(\Ga^{(a)}_{r})$ of $\Ga^{(a)}_{r}$ is equal to $V(\Ga/\ang{\nu})$. The set of half-edges $H(\Ga^{(a)}_{r})$ of $\Ga^{(a)}_{r}$ is given by
$$H(\Ga^{(a)}_{r})=\{(h,\overline{i})\mid h\in H(\Ga/\ang{\nu}) \text{ and } \overline{i}\in\mathbb{Z}/r\mathbb{Z}\},$$
where $H(\Ga/\ang{\nu})$ is the set of half-edges of $\Ga/\ang{\nu}$. The half-edge $(h,\overline{i})$ is incident to the same vertex as the half-edge $h$. Two different half-edges $(h_1,\overline{i_1})$ and $(h_2,\overline{i_2})$ in $\Ga^{(a)}_{r}$ belong to the same edge if and only if $h_1$, $h_2$ are two different half-edges which belong to the same edge of $\Ga/\ang{\nu}$ and $\overline{i_1}=\overline{i_2}$. The multiplicity function $\bm':V(\Ga^{(a)}_{r})\rightarrow\mathbb{Q}_{>0}$ is defined by $\bm'(v)=\frac{\bm(v)}{r}$ for each $v\in V(\Ga^{(a)}_{r})=V(\Ga/\ang{\nu})$.

The orientation $\mathfrak{o}'$ of $\Ga^{(a)}_{r}$ is defined as follows. For a vertex $v\neq v_0$ in $\Ga^{(a)}_{r}$, suppose that in $\Ga/\ang{\nu}$ the cyclic ordering of the half-edges incident to vertex $v$ is given by $h_1<h_2<\cdots<h_n<h_1$. Then in $\Ga^{(a)}_{r}$ the cyclic ordering of the half-edges incident to vertex $v$ is given by
\begin{gather*}
(h_1,\overline{0})<(h_2,\overline{0})<\cdots<(h_n,\overline{0})<(h_1,\overline{\bm_0})
<(h_2,\overline{\bm_0})<\cdots<(h_n,\overline{\bm_0})
<(h_1,\overline{2\bm_0})<(h_2,\overline{2\bm_0})\\<\cdots<(h_n,\overline{2\bm_0})<\cdots  <(h_1,\overline{(r-1)\bm_0})<(h_2,\overline{(r-1)\bm_0})<\cdots<(h_n,\overline{(r-1)\bm_0})<(h_1,\overline{0}).
\end{gather*}
Suppose that in $\Ga/\ang{\nu}$ the cyclic ordering of the half-edges incident to vertex $v_0$ is given by $l_1<l_2<\cdots<l_k<l_1$.
Then in $\Ga^{(a)}_{r}$ the cyclic ordering of the half-edges incident to vertex $v_0$ is given by
\begin{gather*}
(l_1,\overline{0})<(l_2,\overline{0})<\cdots<(l_k,\overline{0})<(l_1,\overline{1})
<(l_2,\overline{1})<\cdots<(l_k,\overline{1})
<(l_1,\overline{2})<(1_2,\overline{2})<\cdots<(l_k,\overline{2})\\<\cdots  <(l_1,\overline{r-1})<(l_2,\overline{r-1})<\cdots<(l_k,\overline{r-1})<(l_1,\overline{0}).
\end{gather*}
\begin{Prop} \label{rep-finite-(a)}
Let $\Ga$ be a ${\bf bFBG}$ with $\Ga/\ang{\nu}$ satisfying condition $(1a)$ in Theorem \ref{thm}, and let $r$ be the order of the Nakayama automorphism $\nu$ of $\Ga$. Then $(r,\bm_0)=1$, where $(r,\bm_0)$ is the greatest common divisor of $r$ and $\bm_0$. Moreover, $\Ga$ is isomorphic to $\Ga^{(a)}_{r}$.
\end{Prop}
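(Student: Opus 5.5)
The plan is to reconstruct $\Ga$ directly from the covering $\Ga\to\Ga/\ang{\nu}$, using that $\ang{\nu}\cong\mathbb{Z}/r\mathbb{Z}$ acts freely on $H(\Ga)$ and fixes every vertex.

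\textbf{Step 1: the structure of $\Ga$ as an $r$-fold cover.} Since $\Ga/\ang{\nu}$ is a ${\bf BT}$, there are no isolated half-edges. Hence for every half-edge $h$ the two half-edges $h$ and $\iota(h)$ lie in different $\ang{\nu}$-orbits, and the quotient $\Ga\to\Ga/\ang{\nu}$ is a genuine Galois covering of graphs with group $\mathbb{Z}/r\mathbb{Z}$. The underlying graph of $\Ga/\ang{\nu}$ is a tree, so this covering of graphs is trivial as a covering of edge sets. Choose a root, and choose one lift of each half-edge coherently along the tree, so that each edge of the tree lifts to $r$ edges. Labelling the lifts of $h$ by $\nu^i(\tilde h)$ identifies $H(\Ga)$ with $H(\Ga/\ang{\nu})\times\mathbb{Z}/r\mathbb{Z}$. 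Two half-edges form an edge exactly when they have the same second coordinate, because $\nu$ commutes with $\iota$. The vertex sets agree, and the multiplicities satisfy $\bm(v)|\ang{\nu}|=\bm_{\Ga/\ang{\nu}}(v)$, which gives $\bm'(v)=\bm(v)/r$.

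\textbf{Step 2: determine the orientation.} Fix a vertex $v$ with $\val_{\Ga/\ang{\nu}}(v)=n$, so that $\val_\Ga(v)=nr$. The orientation at $v$ in $\Ga$ is a cyclic order on its $nr$ half-edges that lifts the cyclic order $h_1<\cdots<h_n$. The successor map therefore sends $(h_j,\bar i)$ to $(h_{j+1},\bar i)$ for $j<n$, and sends $(h_n,\bar i)$ to $(h_1,\overline{i+c_v})$ for some shift $c_v\in\mathbb{Z}/r$; this uses the fact that $\nu$ commutes with the successor map. The Nakayama automorphism is the successor iterated $\bm_\Ga(v)\val_\Ga(v)=\bm(v)n$ times, where $\bm(v)$ is the multiplicity in the quotient. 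Since it acts as $+\bar 1$ on the second coordinate, this forces $\bm(v)c_v\equiv 1 \pmod r$.

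For $v\neq v_0$ we have $\bm(v)=1$, so $c_v=1$. At $v_0$ we get $\bm_0c_{v_0}\equiv1$, which immediately gives $(r,\bm_0)=1$. Moreover, the cyclic order at $v_0$ visits the blocks in the order $0,c,2c,\dots$ with $c=\bm_0^{-1}$.

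\textbf{Step 3: match with $\Ga^{(a)}_r$.} The definition of $\Ga^{(a)}_r$ uses step $\bm_0$ at the vertices $v\neq v_0$ and step $1$ at $v_0$. This is our labelling rescaled: apply the automorphism $\bar i\mapsto \bm_0\bar i$ of $\mathbb{Z}/r$ to the second coordinate, which is allowed since $\bm_0$ is invertible. The rescaling preserves the edge relation, turns step $1$ into step $\bm_0$ and step $c$ into step $1$, and so yields the required isomorphism.

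The main obstacle is Step 1, specifically the coherent choice of lifts. The shifts $c_v$ depend on the choice of base lifts $\tilde h$ at each vertex. One must check that they can be normalised independently at each vertex, or equivalently that the relative shifts between the lifts of the different half-edges at $v$ can be absorbed. The plan is to handle this by rooting the tree at $v_0$ and choosing lifts inductively outward. At each vertex $v$, first fix the lift of the half-edge toward the parent, which is forced by the edge relation. Then choose the lifts of the other half-edges at $v$ as successive successors, within the same block, of that first lift. Because the tree has no cycles, these choices never conflict, so the constraints imposed at each vertex are independent and can all be satisfied.
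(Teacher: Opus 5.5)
Your proof is correct. The paper does not prove this proposition: it states it without proof and says it follows by the method of \cite[Section 5]{LL3}. Your argument therefore stands as a self-contained proof. It labels $H(\Ga)$ by $H(\Ga/\ang{\nu})\times\mathbb{Z}/r\mathbb{Z}$ along $\nu$-orbits, reads off one shift $c_v$ per vertex from the successor map, and rescales by $\bm_0$.

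Three small points would make it cleaner.

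First, the product decomposition of the edge set does not need the tree hypothesis. It only needs that $\Ga/\ang{\nu}$ has no isolated half-edges: for each edge $\{h,\iota(h)\}$ of the quotient, pick any lift $\tilde h$ and set $\widetilde{\iota(h)}=\iota(\tilde h)$. The tree is used only to normalise the lifts at all vertices simultaneously.

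Second, the shift $c_v$ is intrinsic, not dependent on the lifts. Write $\sigma$ for the successor map. The induced successor on the $n$ orbits at $v$ is an $n$-cycle, so $\sigma^n(h)\in\ang{\nu}h$ for every half-edge $h$ at $v$. Since $\sigma$ commutes with $\nu$ and is transitive on the half-edges at $v$, $\sigma^n$ acts on all of them as $\nu^{c_v}$ for a single $c_v\in\mathbb{Z}/r\mathbb{Z}$. Then $\nu=\sigma^{\bm(v)n}=\nu^{\bm(v)c_v}$, together with freeness, gives $\bm(v)c_v\equiv 1 \pmod r$. So $(r,\bm_0)=1$ follows before any lifts are chosen, and only the lifts, not the $c_v$, need normalising.

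Third, to land exactly on $\Ga^{(a)}_r$ as defined, the normalised lifts at each vertex should start at the half-edge $h_1$ used in that definition, which need not be the one pointing to the parent. This is harmless. If the parent half-edge is $h_k$ with forced lift $\tilde h_k$, put $\tilde h_j=\sigma^{j-k}(\tilde h_k)$ for $1\leq j\leq n$. The same observation shows that $\Ga^{(a)}_r$ is well defined up to isomorphism, independent of the choice of starting half-edges.
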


\medskip
{\it Case 2: $\Ga/\ang{\nu}$ satisfies condition $(1b)$ in Theorem \ref{thm}.}

According to \cite[Lemma 3.7]{LL3}, the order $r$ of the Nakayama automorphism $\nu$ of $\Ga$ is even. Let $\Ga/\ang{\nu}=(V(\Ga/\ang{\nu}),E(\Ga/\ang{\nu}),O(\Ga/\ang{\nu}),\bm,\mathfrak{o})$, and denote by $o$ the unique isolated half-edge in $\Ga/\ang{\nu}$. We construct a ${\bf bFBG}$ $\Ga^{(b)}_{r}$ as follows. The set of vertices $V(\Ga^{(b)}_{r})$ of $\Ga^{(b)}_{r}$ is equal to $V(\Ga/\ang{\nu})$, and the set of half-edges $H(\Ga^{(b)}_{r})$ of $\Ga^{(b)}_{r}$ is given by
$$H(\Ga^{(b)}_{r})=\{(h,\overline{i})\mid h\in H(\Ga/\ang{\nu}) \text{ and } \overline{i}\in\mathbb{Z}/r\mathbb{Z}\},$$
where $H(\Ga/\ang{\nu})$ is the set of half-edges (both isolated and non-isolated) of $\Ga/\ang{\nu}$. The half-edge $(h,\overline{i})$ is incident to the same vertex as the half-edge $h$. Two different half-edges $(h_1,\overline{i_1})$ and $(h_2,\overline{i_2})$ in $\Ga^{(b)}_{r}$ belong to the same edge if and only if $h_1$, $h_2$ are two different non-isolated half-edges which belong to the same edge of $\Ga/\ang{\nu}$ and $\overline{i_1}=\overline{i_2}$, or $h_1,h_2=o$ and $\overline{i_2}=\overline{i_1+\frac{r}{2}}$. The multiplicity function $\bm':V(\Ga^{(b)}_{r})\rightarrow\mathbb{Q}_{>0}$ is defined by $\bm'(v)=\frac{\bm(v)}{r}$ for each $v\in V(\Ga^{(b)}_{r})=V(\Ga/\ang{\nu})$.

The orientation $\mathfrak{o}'$ of $\Ga^{(b)}_{r}$ is defined as follows. For each vertex $v$ in $\Ga^{(b)}_{r}$, suppose that in $\Ga/\ang{\nu}$ the cyclic ordering of the half-edges incident to vertex $v$ is given by $h_1<h_2<\cdots<h_n<h_1$. Then in $\Ga^{(b)}_{r}$ the cyclic ordering of the half-edges incident to vertex $v$ is given by
\begin{gather*}
(h_1,\overline{0})<(h_2,\overline{0})<\cdots<(h_n,\overline{0})<(h_1,\overline{1})<(h_2,\overline{1})<\cdots<(h_n,\overline{1})<(h_1,\overline{2})<(h_2,\overline{2})<\cdots<(h_n,\overline{2}) \\<\cdots <(h_1,\overline{r-1})<(h_2,\overline{r-1})<\cdots<(h_n,\overline{r-1})<(h_1,\overline{0}).
\end{gather*}

\begin{Prop} \label{rep-finite-(b)}
Let $\Ga$ be a ${\bf bFBG}$ with $\Ga/\ang{\nu}$ satisfying condition $(1b)$ in Theorem \ref{thm}, and let $r$ be the order of the Nakayama automorphism $\nu$ of $\Ga$. Then $r$ is even, and $\Ga$ is isomorphic to $\Ga^{(b)}_{r}$.
\end{Prop}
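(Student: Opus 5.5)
To prove Proposition \ref{rep-finite-(b)}, I first show that $r$ is even and then build an explicit isomorphism $\Ga\to\Ga^{(b)}_{r}$ by labelling the $\ang{\nu}$-orbits of half-edges with elements of $\mathbb{Z}/r\mathbb{Z}$.

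\emph{Parity of $r$.} Let $o=\ang{\nu}\cdot h_o$ be the unique isolated half-edge of $\Ga/\ang{\nu}$. Then $\iota(h_o)=\nu^{s}(h_o)$ for some $s$. Applying $\iota$ and using $\iota\nu=\nu\iota$ (since $\nu$ is a graph automorphism) gives $h_o=\iota^2(h_o)=\nu^{2s}(h_o)$. The action of $\ang{\nu}$ is free (\cite[Lemma 3.6]{LL3}), so $r\mid 2s$. Also $s\not\equiv 0$, since $\iota$ has no fixed points. Hence $r=2s'$ with $s\equiv s'=r/2 \pmod r$, so $r$ is even and $\iota(h_o)=\nu^{r/2}(h_o)$. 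This is exactly the gluing rule $\overline{i}\mapsto\overline{i+r/2}$ in $\Ga^{(b)}_{r}$; it recovers \cite[Lemma 3.7]{LL3}.

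\emph{Labelling.} Freeness means each orbit $\ang{\nu}\cdot h$ is a $\mathbb{Z}/r\mathbb{Z}$-torsor. I choose a representative $\tilde h$ in each orbit and send $\nu^{i}(\tilde h)\mapsto(h,\overline{i})$. The representatives must be chosen compatibly with two structures.
\begin{enumerate}[$(1)$]
\item Edges: for a non-isolated edge $\{h,h'\}$ of $\Ga/\ang{\nu}$, take $\tilde h'=\iota(\tilde h)$. Then $\iota$ corresponds to $(h,\overline{i})\leftrightarrow(h',\overline{i})$, because $\iota$ commutes with $\nu$.
\item Cyclic orders: at a vertex $v$, $\nu$ acts on the cyclically ordered half-edges as rotation by $\bm(v)\val_\Ga(v)$ steps. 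Since $\val_\Ga(v)=r\cdot n_v$, where $n_v$ is the valency of $v$ in $\Ga/\ang{\nu}$, the ordering at $v$ is a sequence of $r$ blocks, each a lift of $h_1<\dots<h_{n_v}$.
\end{enumerate}
I need $\nu$ to move one block forward, i.e.\ $\bm(v)\val_\Ga(v)\equiv n_v$ modulo $\val_\Ga(v)$. Because $\bm(v)r$ is the quotient multiplicity, this reads $\bm(v)r\,n_v\equiv n_v \pmod{r n_v}$, i.e.\ $\bm(v)r\equiv 1\pmod r$. Here I use that $\Ga/\ang{\nu}$ is multiplicity-free (a representation-finite ${\bf mBT}$), so $\bm(v)r=1$ and $\bm(v)=1/r$, matching $\bm'(v)=\bm(v)/r$ up to the normalisation of quotient multiplicity. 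So at each $v$, $\nu$ shifts by exactly one block, and the labels $\overline{i}$ increase by one per block, exactly as in the orientation of $\Ga^{(b)}_{r}$.

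\emph{Main obstacle: global consistency.} Choosing representatives at one vertex fixes the block labels there, but the labels must also agree across every edge. Because the underlying graph of $\Ga/\ang{\nu}$ minus $o$ is a tree, I propagate from $v_0$ along the tree. At $v_0$, fix $\tilde h$ for the first half-edge, which fixes the labels of all half-edges at $v_0$ via the block structure. For each edge to a new vertex $w$, the $\iota$-compatibility of (1) fixes one representative at $w$, and the block structure at $w$ then fixes the rest. The tree has no cycles, so there are no conflicts. The isolated half-edge needs no compatibility beyond the parity step, because the shift $r/2$ is intrinsic.

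The resulting map is a bijection on half-edges that respects incidence, $\iota$, orientation and multiplicities. It is therefore an isomorphism of ${\bf bFBG}$s $\Ga\cong\Ga^{(b)}_{r}$.
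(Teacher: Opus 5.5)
Your proof is correct. The paper gives no proof of this proposition: it cites \cite[Lemma 3.7]{LL3} for the parity of $r$ and defers the isomorphism to ``the same method as in \cite[Section 5]{LL3}''. Your argument is a sound, self-contained version of that approach. The parity follows from $\iota(h_o)=\nu^{s}(h_o)$ together with freeness of $\ang{\nu}$. The isomorphism comes from labelling each $\ang{\nu}$-orbit by $\mathbb{Z}/r\mathbb{Z}$, with representatives chosen compatibly with $\iota$ and propagated along the tree, using $\bm(v)\val(v)=n_v$ from multiplicity-freeness. Your proof rightly never uses $\kappa_0=\kappa_1=0$, which fits the paper's reuse of $\Ga^{(b)}_{r}$ in Proposition \ref{domestic-(b)}.
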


\subsection{Domestic case}
\

In this subsection, we suppose that $\Ga$ is a ${\bf bFBG}$ with Nakayama automorphism $\nu$ such that $\gr(A_{\Gamma})$ is domestic. We denote by $r$ the order of $\nu$. It follows that $\Ga/\ang{\nu}$ satisfies one of the conditions $(2a)$-$(2e)$ in Theorem \ref{thm}.

\medskip
{\it Case 1: $\Ga/\ang{\nu}$ satisfies condition $(2a)$ in Theorem \ref{thm}.}

Denote by $\Ga^{(a)}_{r}$ the ${\bf bFBG}$  constructed in Subsection \ref{subsec-representation-finite-case}. Then we have

\begin{Prop} \label{domestic-(a)}
Let $\Ga$ be a ${\bf bFBG}$ with $\Ga/\ang{\nu}$ satisfying condition $(2a)$ in Theorem \ref{thm}, and let $r$ be the order of the Nakayama automorphism $\nu$ of $\Ga$. Then $(r,\bm_0)=1$, where $(r,\bm_0)$ is the greatest common divisor of $r$ and $\bm_0$. Moreover, $\Ga$ is isomorphic to $\Ga^{(a)}_{r}$.
\end{Prop}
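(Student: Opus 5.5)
The statement is Proposition \ref{domestic-(a)}. Condition $(2a)$ is a hypothesis only about $\Ga/\ang{\nu}$ being a ${\bf BT}$ with exceptional vertex $v_0$ of multiplicity $\bm_0$. The extra numerical condition $\kappa_0(\bm_0-1)+\kappa_1=1$ plays no role in the reconstruction of $\Ga$. So I would prove the following more general claim, which also covers Proposition \ref{rep-finite-(a)}: if $\Ga/\ang{\nu}$ is a ${\bf BT}$ with exceptional multiplicity $\bm_0$ and $r=|\ang{\nu}|$, then $(r,\bm_0)=1$ and $\Ga\cong\Ga^{(a)}_{r}$. The method is the one of \cite[Section 5]{LL3}: describe $\Ga$ as an $r$-fold unfolding of its quotient.

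\textbf{Step 1: the shape of $\Ga$.} Since $\ang{\nu}$ acts freely on $H(\Ga)$ and trivially on $V(\Ga)$, each half-edge $h$ of $\Ga/\ang{\nu}$ lifts to exactly $r$ half-edges of $\Ga$ at the same vertex. The quotient has no isolated half-edges, so $\ang{\nu}$ is admissible and each edge of the tree $\Ga/\ang{\nu}$ lifts to exactly $r$ edges.

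\textbf{Step 2: labelling by $\mathbb{Z}/r\mathbb{Z}$.} Because $\Ga/\ang{\nu}$ is a tree, we can choose a lift $(h,\overline{0})$ of every half-edge $h$ consistently. Root the tree at $v_0$ and propagate outward:
\begin{enumerate}[$(a)$]
\item along each edge, pair the chosen lift of one half-edge with the lift of the other half-edge lying on the same edge of $\Ga$;
\item at each vertex, use the cyclic order.
\end{enumerate}
We then set $(h,\overline{i}):=\nu^{i}(h,\overline{0})$.

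Since $\nu$ commutes with $\iota$, edges of $\Ga$ are exactly the pairs $\{(h_1,\overline{i}),(h_2,\overline{i})\}$. No cycle of the quotient obstructs this choice, because there are none.

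\textbf{Step 3: the cyclic orders.} At a vertex $v$ of valency $n$ in the quotient, the cyclic order of $\Ga$ at $v$ is a single cycle of length $nr$ lifting $h_1<\cdots<h_n$. Hence the successor of $(h_n,\overline{i})$ is $(h_1,\overline{i+c_v})$ for some $c_v\in\mathbb{Z}/r$, and we have $\bm(v)\val(v)=\bm_{\Ga}(v)\,rn$.

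Recall that $\nu$ is the rotation by $\bm_{\Ga}(v)\val_{\Ga}(v)$ steps. This rotation sends $(h,\overline{i})\mapsto(h,\overline{i+1})$. Rotating by $n$ steps adds $c_v$ to the index, so rotating by $\bm(v)n$ steps adds $\bm(v)c_v$. Therefore
\[
\bm(v)c_v\equiv 1 \pmod r .
\]
\begin{enumerate}[$(a)$]
\item At $v\neq v_0$ we have $\bm(v)=1$, which forces $c_v=1$.
\item At $v_0$ we get $\bm_0c_{v_0}\equiv1$. This gives $(r,\bm_0)=1$, and $c_{v_0}$ is the inverse of $\bm_0$ modulo $r$.
\end{enumerate}
Also, connectedness of $\Ga$ requires the $c_v$ to generate $\mathbb{Z}/r$.

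\textbf{Step 4: matching $\Ga^{(a)}_{r}$.} The paper's normalisation is the opposite one: increment $1$ at $v_0$ and increment $\bm_0$ at the other vertices. To reach it, rescale the labels by the automorphism $\overline{i}\mapsto\overline{\bm_0 i}$ of $\mathbb{Z}/r$, which is legitimate since $(r,\bm_0)=1$. This turns the increments into $c_{v_0}\bm_0=1$ at $v_0$ and $\bm_0$ elsewhere, as in $\Ga^{(a)}_{r}$. The rescaling respects the edge pairing $\overline{i}=\overline{i}$, and the multiplicities agree: $\bm'(v)=\bm(v)/r$. This yields the isomorphism $\Ga\cong\Ga^{(a)}_{r}$ (re-indexing $\nu$ by a generator changes nothing).

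\textbf{Main obstacle.} The delicate step is Step 2: showing that lifts can be chosen so that simultaneously
\begin{enumerate}[$(a)$]
\item edges of $\Ga$ join equal indices, and
\item the successor offsets within each vertex are the uniform $c_v$.
\end{enumerate}
The point is that reindexing lifts at one vertex interacts with the edge pairing at the neighbouring vertex. I would handle this by induction over the tree from $v_0$. At each new vertex $w$, reached by an edge whose lifts are already labelled, we extend the labelling around $w$ using the cyclic order, starting from the fixed labels of the incoming edge. Acyclicity guarantees that no conflicting constraint ever returns.
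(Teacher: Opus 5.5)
Your argument is correct and is essentially the unfolding method the paper has in mind. The paper states this proposition without proof, deferring to \cite[Section 5]{LL3}. Your $\mathbb{Z}/r\mathbb{Z}$-labelling of the fibres via $\nu$ (trivialised along the tree from $v_0$), the relation $\bm(v)c_v\equiv 1 \pmod r$ and the rescaling by $\bm_0$ reproduce exactly the cyclic orderings of $\Ga^{(a)}_r$; since you never use $\kappa_0(\bm_0-1)+\kappa_1=1$, this also proves Proposition \ref{rep-finite-(a)}. One cosmetic slip: connectedness of $\Ga$ is automatic because all lifts of half-edges at $v$ sit at the same vertex, so what forces each $c_v$ to generate $\mathbb{Z}/r\mathbb{Z}$ is that the cyclic order at $v$ is a single $nr$-cycle, and this in any case already follows from $\bm(v)c_v\equiv 1 \pmod r$.
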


Since $\Ga/\ang{\nu}$ satisfies condition $(2a)$ in Theorem \ref{thm}, it is a ${\bf BT}$ with an exceptional vertex $v_0$ of multiplicity $\bm_0$ such that $\kappa_0(\bm_0-1)+\kappa_1=1$. Therefore there are two subcases:

{\it Case 1.1: $\Ga/\ang{\nu}$ is a ${\bf BT}$ with an exceptional vertex $v_0$ of multiplicity $2$ such that $\kappa_0=1$ and $\kappa_1=0$.}

{\it Case 1.2: $\Ga/\ang{\nu}$ is a multiplicity-free ${\bf BT}$ with $\kappa_1=1$.}

For the representation type of $\gr(A_{\Gamma})$ we have the following proposition.

\begin{Prop} \label{domestic-(a1a2)}
Let $\Ga$ be a ${\bf bFBG}$ with $\Ga/\ang{\nu}$ satisfying condition $(2a)$ in Theorem \ref{thm}, and let $r$ be the order of the Nakayama automorphism $\nu$ of $\Ga$. Then we have the following.
\begin{enumerate}[$(1)$]
	\item If $\Ga/\ang{\nu}$ is a ${\bf BT}$ in Case 1.1, then $\gr(A_{\Gamma})$ is $1$-domestic.
	\item If $\Ga/\ang{\nu}$ is a ${\bf BT}$ in Case 1.2 and $n$ is even, then $\gr(A_{\Gamma})$ is $1$-domestic, where $\{i=v_{S}^{(i)}\defeqi v_{L}^{(i)},j=v_{S}^{(j)}\defeqj v_{L}^{(j)}\}$ is the unique unbalanced edge pair in $\Ga/\ang{\nu}$ and $n=d_{\Ga/\ang{\nu}}(v_{S}^{(i)},v_{S}^{(j)})$.
	\item If $\Ga/\ang{\nu}$ is a ${\bf BT}$ in Case 1.2 and $n$ is odd,
	then $\gr(A_{\Gamma})$ is $r$-domestic, where $\{i=v_{S}^{(i)}\defeqi v_{L}^{(i)},j=v_{S}^{(j)}\defeqj v_{L}^{(j)}\}$ is the unique unbalanced edge pair in $\Ga/\ang{\nu}$ and $n=d_{\Ga/\ang{\nu}}(v_{S}^{(i)},v_{S}^{(j)})$.
\end{enumerate}
\end{Prop}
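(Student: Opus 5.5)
Since $\Ga/\ang{\nu}$ satisfies $(2a)$, it is a ${\bf BT}$, so $\ang{\nu}$ is admissible and by Proposition \ref{domestic-(a)} we may take $\Ga=\Ga^{(a)}_r$ with $(r,\bm_0)=1$. The map $(h,\overline{i})\mapsto h$ is then a Galois covering $\Ga\to\Ga/\ang{\nu}=R_\Ga$ with group $\mathbb{Z}/r\mathbb{Z}$. By \cite[Lemma 5.12]{LL2} and Theorem \ref{induced-covering}, it induces a Galois covering $f:(Q_A,I_{A,2})\to(Q_B,I_{B,2})$ of the string algebras $\overline{\gr(A)}$ and $\overline{\gr(B)}$, where $B=A_{R_\Ga}$.

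By Theorem \ref{grA-infinite-type}, $\gr(B)$ is $1$-domestic. Hence $\mathrm{Ba}(\overline{\gr(B)})$ has exactly one element $b'$, and the surjection $\overline{g}$ from the proof of Theorem \ref{induced-covering} sends every band class of $\overline{\gr(A)}$ to $\overline{b'}$. So $n$, the number of band classes of $\overline{\gr(A)}$, equals the number of $\sim_A$-classes of bands $b$ with $f(b)$ a power of a rotation of $b'^{\pm1}$. Lifting $b'$ from a vertex gives a string $c$ whose endpoints differ by a deck transformation $\sigma(b')\in\mathbb{Z}/r\mathbb{Z}$, well defined up to sign and the choice of rotation. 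If $\sigma(b')$ has order $d$, the lifts close up into bands of length $d\,\ell(b')$. These form $r/d$ classes under translation, and distinct translates are not rotations of one another. Also $b'$ is not equivalent to its inverse, since $\gr(B)$ is $1$-domestic and not $2$-domestic. So I expect $n=r/d$, with $d\in\{1,r\}$ in our cases.

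The main step is to identify $b'$ concretely and compute $\sigma(b')$. I take $b'$ to be the band from \cite{GLY} and compute its net \emph{winding}: in $\Ga^{(a)}_r$, one full rotation at $v\ne v_0$ shifts the index by $\bm_0$, and at $v_0$ it shifts it by $1$. Modulo $r$, the index shift of $b'$ is therefore determined by how many times the band passes once around a vertex past the relevant wrap-around point, with orientation signs.

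In Case 1.1, the band comes from the unbalanced edge oriented away from $v_0$ together with the multiplicity-$2$ loop at $v_0$. The relevant paths go around $v_0$ and back, so the total shift is an integer combination of the form $1\cdot(\text{net turns at }v_0)+\bm_0\cdot(\dots)$, and I expect it to cancel to $0$. That gives $d=1$ and $n=1$.

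In Case 1.2, the band travels along the walk from $v_S^{(i)}$ to $v_S^{(j)}$ of length $n$, using the longer paths $r_i,r_j\in\mathbb{P}$ at the two ends. Along the walk the string alternates between direct and inverse segments, and at each intermediate vertex of this tree the contributions are positive or negative according to parity. After taking the band's combinatorial orientation into account, I expect the shift to be $0$ when $n$ is even and a generator of $\mathbb{Z}/r\mathbb{Z}$ when $n$ is odd. In the multiplicity-free setting each rotation shifts the index by $\bm_0=1$, so the shifts are $\pm1$ and only the parity matters. That gives $n=1$ for even $n$ and $n=r$ for odd $n$.

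Verifying this parity sign bookkeeping rigorously, using the explicit quiver of $\Ga^{(a)}_r$, is the main obstacle.
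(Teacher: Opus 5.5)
Your overall strategy is sound, and it is presumably the intended one: the paper states this result without proof, referring to the covering method of \cite{LL3}. You replace $\Ga$ by $\Ga^{(a)}_r$, use the Galois covering $\Ga\to\Ga/\ang{\nu}=R_\Ga$ with group $\mathbb{Z}/r\mathbb{Z}$, and count band classes of $\overline{\gr(A)}$ lying over the unique band $b'$ of $\overline{\gr(B)}$. Your count is also correct: if the monodromy $\sigma(b')$ has order $d$, there are exactly $r/d$ classes. The gap is in the decisive step, and it contains two errors that happen to cancel.

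First, your inference from the monodromy is inverted. By your own formula, $\sigma(b')=0$ gives $d=1$ and hence $r$ band classes, while a generator gives $d=r$ and a single class. So $1$-domestic requires $\sigma(b')$ to be a generator, and $r$-domestic requires $\sigma(b')=0$. You instead read off ``$d=1\Rightarrow 1$'' and ``$d=r\Rightarrow r$''.

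Second, your predicted monodromies are the opposite of the true ones. You predict $0$ in Case 1.1 and in Case 1.2 with $n$ even. Combined honestly with $r/d$, these predictions would give $r$-domestic in $(1)$ and would swap $(2)$ and $(3)$. So as written the argument does not prove the statement.

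Here is a concrete check for Case 1.1. Take $v_0$ of multiplicity $2$ and valency $1$, joined by an edge $e_1$ to a vertex $w$ of valency $3$ carrying two leaf edges $e_2,e_3$. In $\gr(B)$ the relation $\alpha^2-\beta_3\beta_2\beta_1$ becomes $\alpha^2$, and the band is $b'=\alpha^{-1}\beta_3\beta_2\beta_1$. In $\Ga^{(a)}_r$ (with $r$ odd) the arrows are $\alpha:(e_1,k)\to(e_1,k+1)$, $\beta_1:(e_1,k)\to(e_2,k)$, $\beta_2:(e_2,k)\to(e_3,k)$ and $\beta_3:(e_3,k)\to(e_1,k+2)$. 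The lift of $b'$ from $(e_1,0)$ therefore ends at $(e_1,1)$, so $\sigma(b')=1$, not $0$.

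In general, the Case 1.1 band winds once around each vertex of the walk from $v_0$ to $v_L$, with alternating orientation. A full turn at $v_0$ shifts the index by $1$, and a full turn anywhere else shifts it by $\bm_0=2$. Hence $\sigma(b')=\pm1$, which gives one class.

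In Case 1.2 (where $\bm_0=1$), the band winds exactly once around each of the $n+3$ vertices $v_L^{(i)},v_S^{(i)},\dots,v_S^{(j)},v_L^{(j)}$, again with alternating orientation. At the two ends this is the longer path $r_i$ or $r_j$; at an interior vertex it is two complementary arcs traversed in the same direction. So $\sigma(b')$ is an alternating sum of $n+3$ terms $\pm1$. It equals $\pm1$ when $n$ is even, giving one class, and $0$ when $n$ is odd, giving $r$ classes, which is exactly the statement. As a check, for $n=0$ (say $v_S^{(i)}=v_S^{(j)}$ of valency $2$, each $v_L$ of valency $3$ with two leaves) the monodromy is $-1$; for the analogous tree with $n=1$ it is $0$.

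Note that ``only the parity matters'' is not an argument. A sum of $\pm1$'s modulo $r$ is not determined by its parity, so you need the exact alternating winding structure, and that is precisely the computation you left undone.

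Two minor points. No band in a string algebra is a rotation of its own inverse; this follows from condition $(b1)$ and has nothing to do with $\gr(B)$ failing to be $2$-domestic. Also, using $n$ both for the distance $d_{\Ga/\ang{\nu}}(v_S^{(i)},v_S^{(j)})$ and for the number of band classes makes your final sentences ambiguous.
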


\medskip
{\it Case 2: $\Ga/\ang{\nu}$ satisfies condition $(2b)$ in Theorem \ref{thm}.}

According to \cite[Lemma 3.7]{LL3}, the order $r$ of the Nakayama automorphism $\nu$ of $\Ga$ is even. We denote by $\Ga^{(b)}_{r}$ the ${\bf bFBG}$ constructed in Subsection \ref{subsec-representation-finite-case}. Then we have

\begin{Prop} \label{domestic-(b)}
Let $\Ga$ be a ${\bf bFBG}$ with $\Ga/\ang{\nu}$ satisfying condition $(2b)$ in Theorem \ref{thm}, and let $r$ be the order of the Nakayama automorphism $\nu$ of $\Ga$. Then $r$ is even, $\Ga$ is isomorphic to $\Ga^{(b)}_{r}$, and $\gr(A_{\Gamma})$ is $\frac{r}{2}$-domestic.
\end{Prop}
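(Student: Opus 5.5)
The statement has three parts: $r$ is even, $\Gamma\cong\Gamma^{(b)}_{r}$, and $\mathrm{gr}(A_\Gamma)$ is $\frac{r}{2}$-domestic. The first two need nothing new. Condition $(2b)$ says $\Gamma/\langle\nu\rangle$ is a representation-finite ${\bf mBT}$, exactly as in $(1b)$. Evenness of $r$ is \cite[Lemma 3.7]{LL3}, since the isolated half-edge $o$ forces some $\nu^{k}$ to swap the two halves of an edge, and freeness of the action gives $r=2k$. The isomorphism $\Gamma\cong\Gamma^{(b)}_{r}$ is Proposition \ref{rep-finite-(b)}, whose proof uses only the shape of $\Gamma/\langle\nu\rangle$ and not the graded degrees. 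So the real content is the count $n=\frac r2$ of bands, and I would get it by counting bands of $\overline{\mathrm{gr}(A_\Gamma)}$ directly on the explicit model $\Gamma^{(b)}_{r}$.

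\emph{Step 1: the reduced form.} Set $B=A_{R_\Gamma}$. Here $R_\Gamma$ is a multiplicity-free ${\bf BT}$ made of two copies of $\Gamma/\langle\nu\rangle$ glued along $o$. In the proof of Theorem \ref{thm}, case $(b)$, the condition $\kappa_0=1$, $\kappa_1=0$ on $\Gamma/\langle\nu\rangle$ becomes $\kappa_1=1$ on $R_\Gamma$. The unique unbalanced pair consists of the unbalanced edge $i$ and its mirror copy $i'$, and $d(v_S^{(i)},v_S^{(i')})$ is even by the mirror symmetry. By Theorem \ref{grA-infinite-type}, $\mathrm{gr}(B)$ is $1$-domestic, with a single band $b'$ (up to $\sim_B$) in $\overline{\mathrm{gr}(B)}$. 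I will make $b'$ explicit as in \cite{GLY}: it runs from $i$ to $i'$ and back along the walk through the glued edge $o$, using the two zero-relations produced by the unbalanced edges.

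\emph{Step 2: lift through the covering of Theorem \ref{second-covering}.} Use the ${\bf bFBG}$ $\Gamma'$ of Lemma \ref{two-coverings}, with its coverings $p:\Gamma'\to\Gamma$ and $q:\Gamma'\to R_\Gamma$. Since $\Gamma\cong\Gamma^{(b)}_r$, I would rather work directly with the composite map $\Gamma^{(b)}_r\to\Gamma/\langle\nu\rangle$, which unfolds to a $\mathbb{Z}/r$-type covering. Strings of $\overline{\mathrm{gr}(A_\Gamma)}$ lying over $b'$ are the lifts of $b'$ to $Q_{A}$. Following the proof of Theorem \ref{induced-covering}, a band of $\overline{\mathrm{gr}(A_\Gamma)}$ over $b'$ is a closed lift of the smallest power $(b')^{m}$ that closes up.

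One traversal of $b'$ crosses the glued edge $o$ twice. In $\Gamma^{(b)}_r$ each crossing of $o$ shifts the sheet index by $\frac r2$, and each rotation at a vertex $v$ shifts it by the integer dictated by the orientation $\mathfrak o'$. I would compute the net shift $s\in\mathbb{Z}/r$ of one traversal, expecting $s=\frac r2+\frac r2=0$. In that case every lift closes after one traversal, the lifts are indexed by starting sheets in $\mathbb{Z}/r$, and rotation and inversion identify sheets in pairs, leaving $\frac r2$ classes. Whether inversion acts as $\bar j\mapsto \bar j+\frac r2$ or as an actual band inverse is exactly what produces the factor $\frac12$.

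\emph{Step 3: no other bands.} Every band of $\overline{\mathrm{gr}(A_\Gamma)}$ maps under $g$ to a band of $\overline{\mathrm{gr}(B)}$, and the only one is $b'$. So all bands are counted in Step 2, and by Theorem \ref{Theorem} (Appendix \ref{appendix}) $\mathrm{gr}(A_\Gamma)$ is $\frac r2$-domestic.

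The main obstacle is the bookkeeping in Step 2: the sheet shift of one traversal of $b'$, and the exact action of rotation and inversion on lifts. I expect to check this carefully on the smallest instance, namely Example \ref{Example-BT}(2) with $r=2$, where the answer should be $1$-domestic. There I would verify that the net shift is trivial and that the two lifts are inverse to each other, and then extend to general even $r$ by the uniform description of $\mathfrak o'$.
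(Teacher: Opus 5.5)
\emph{Not a proof yet: the only nontrivial claim, the count $\frac{r}{2}$, is left as an expectation, and the setting in which you propose to compute it is not available.} The paper gives no proof of this proposition (it defers to the method of \cite[Section 5]{LL3}), so your argument has to stand on its own.

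\emph{No direct covering.} The paper gives no Galois covering between $(Q_{A_\Ga},I_{A_\Ga})$ and $(Q_B,I_B)$. For $r=2$ such a covering would have trivial group, hence be an isomorphism. That is impossible already in Example \ref{Example-BT}(2), since $B$ is symmetric and $A_\Ga$ is not. The only available tool is the roof $A_\Ga\leftarrow C\rightarrow B$ of Theorem \ref{second-covering}. Also, $\Ga/\ang{\nu}$, being an ${\bf mBG}$, carries no algebra in this paper. Consequently:
\begin{enumerate}[$(1)$]
\item ``lifts of $b'$ to $Q_A$'' in Step 2, and the map $g$ from bands of $\overline{\gr(A_\Ga)}$ to bands of $\overline{\gr(B)}$ in Step 3, are undefined;
\item even for genuine coverings, Theorem \ref{induced-covering} only detects whether $\mathrm{Ba}$ is empty or finite, never its cardinality.
\end{enumerate}
You must either classify the bands of $\overline{\gr(A_{\Ga^{(b)}_r})}$ directly, or lift $b'$ along $q$ to $\overline{\gr(C)}$ and push down along $p$. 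In the second route, surjectivity of the band maps for both $p$ and $q$ shows that every band of $\overline{\gr(A_\Ga)}$ arises this way. A smaller point: Proposition \ref{rep-finite-(b)} is stated under $(1b)$, not $(2b)$. To use it you must note that reconstructing $\Ga$ from $\Ga/\ang{\nu}$ and $r$ does not involve graded degrees.

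\emph{The sheet bookkeeping is also wrong.} In $\Ga^{(b)}_r$, the full cycle of $b'$ around $v_L$ at the unbalanced edge is a path $P_{v_L}$ running from $(i,\bar k)$ to $(i,\overline{k+1})$. Every proper arc also shifts the sheet, by $0$ or $\pm1$. So the net shift is not just the two crossings of an $o$-edge. A correct account of one traversal:
\begin{enumerate}[$(1)$]
\item Each half of $b'$ between its two full cycles has image in $\Ga/\ang{\nu}$ a path from $i$ out to $o$ followed by the same path reversed. Its arc shifts therefore cancel, and only the crossing contributes $\frac{r}{2}$.
\item The two full cycles are traversed in opposite directions, so they contribute $+1$ and $-1$. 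This holds precisely because $d(v_S^{(i)},v_S^{(i')})=2d(v_0,v_S^{(i)})+1$ is odd, contrary to your Step 1 claim; in Example \ref{Example-BT}(2) this distance is $1$. It is the same parity mechanism that separates the $1$-domestic and $r$-domestic cases in Proposition \ref{domestic-(a1a2)}.
\item Hence the net shift is $1+\frac{r}{2}-1+\frac{r}{2}\equiv 0 \pmod r$. There are $r$ closed lifts $L_{\bar k}$, one beginning with the direct full cycle at each $(i,\bar k)$.
\end{enumerate}

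\emph{The factor $\frac12$ must be proved, not chosen between alternatives.} Take the inverse of $L_{\bar k}$ and rotate it to start at its (now direct) full cycle. It starts at $(i,\overline{k+\frac{r}{2}})$. By the mirror symmetry of $R_\Ga$, its image in $\Ga/\ang{\nu}$ is that of $b'$, so it equals $L_{\overline{k+r/2}}$. Each $L_{\bar k}$ contains exactly one direct and one inverse full cycle, so no other identifications occur. This gives exactly $\frac{r}{2}$ classes. Checking the case $r=2$ cannot replace this argument.
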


\medskip
{\it Case 3: $\Ga/\ang{\nu}$ satisfies condition $(2c)$ in Theorem \ref{thm}.}

According to \cite[Lemma 3.7]{LL3}, the order $r$ of the Nakayama automorphism $\nu$ of $\Ga$ is even. Let $\Ga/\ang{\nu}=(V(\Ga/\ang{\nu}),E(\Ga/\ang{\nu}),O(\Ga/\ang{\nu}),\bm,\mathfrak{o})$, and denote by $o_0$ and $o_1$ the exactly two isolated half-edges in $\Ga/\ang{\nu}$. We construct a ${\bf bFBG}$ $\Ga^{(c)}_{r}$ as follows (see \cite[Subsection 5.2]{LL3}). The set of vertices $V(\Ga^{(c)}_{r})$ of $\Ga^{(c)}_{r}$ is equal to $V(\Ga/\ang{\nu})$, and the set of half-edges $H(\Ga^{(c)}_{r})$ of $\Ga^{(c)}_{r}$ is given by
$$H(\Ga^{(c)}_{r})=\{(h,\overline{i})\mid h\in H(\Ga/\ang{\nu}) \text{ and } \overline{i}\in\mathbb{Z}/r\mathbb{Z}\},$$
where $H(\Ga/\ang{\nu})$ is the set of half-edges (both isolated and non-isolated) of $\Ga/\ang{\nu}$. The half-edge $(h,\overline{i})$ is incident to the same vertex as the half-edge $h$. Two different half-edges $(h_1,\overline{i_1})$ and $(h_2,\overline{i_2})$ in $\Ga^{(c)}_{r}$ belong to the same edge if and only if $h_1$, $h_2$ are two different non-isolated half-edges which belong to the same edge of $\Ga/\ang{\nu}$ and $\overline{i_1}=\overline{i_2}$, or $h_1,h_2=o_j$ for some $j=0,1$ and $\overline{i_2}=\overline{i_1+\frac{r}{2}}$. The multiplicity function $\bm':V(\Ga^{(c)}_{r})\rightarrow\mathbb{Q}_{>0}$ is defined by $\bm'(v)=\frac{\bm(v)}{r}$ for each $v\in V(\Ga^{(c)}_{r})=V(\Ga/\ang{\nu})$.

The orientation $\mathfrak{o}'$ of $\Ga^{(c)}_{r}$ is defined as follows. For each vertex $v$ in $\Ga^{(c)}_{r}$, suppose that in $\Ga/\ang{\nu}$ the cyclic ordering of the half-edges incident to vertex $v$ is given by $h_1<h_2<\cdots<h_n<h_1$. Then in $\Ga^{(c)}_{r}$ the cyclic ordering of the half-edges incident to vertex $v$ is given by
\begin{gather*}
	(h_1,\overline{0})<(h_2,\overline{0})<\cdots<(h_n,\overline{0})<(h_1,\overline{1})
	<(h_2,\overline{1})<\cdots<(h_n,\overline{1})<(h_1,\overline{2})<(h_2,\overline{2})<\cdots<(h_n,\overline{2})\\<\cdots  <(h_1,\overline{r-1})<(h_2,\overline{r-1})<\cdots<(h_n,\overline{r-1})<(h_1,\overline{0}).
\end{gather*}

\begin{Prop} \label{domestic-(c)}
Let $\Ga$ be a ${\bf bFBG}$ with $\Ga/\ang{\nu}$ satisfying condition $(2c)$ in Theorem \ref{thm}, and let $r$ be the order of the Nakayama automorphism $\nu$ of $\Ga$. Then $r$ is even, $\Ga$ is isomorphic to $\Ga^{(c)}_{r}$, and $\gr(A_{\Gamma})$ is $r$-domestic.
\end{Prop}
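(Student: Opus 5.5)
To prove Proposition \ref{domestic-(c)} I would split it into three claims: the parity of $r$, the isomorphism $\Ga\cong\Ga^{(c)}_{r}$, and the value $r$ of the domestic number.

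\emph{Parity of $r$.} This is \cite[Lemma 3.7]{LL3}. Since $\ang{\nu}$ acts freely on half-edges, an isolated half-edge $\ang{\nu}\cdot h$ of $\Ga/\ang{\nu}$ means $\iota(h)=\nu^{k}(h)$ for some $k$. Then $\nu^{2k}(h)=h$, and freeness forces $2k\equiv 0 \pmod r$ with $k\not\equiv 0$. Hence $k=r/2$ and $r$ is even.

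\emph{The isomorphism.} I would follow \cite[Subsection 5.2]{LL3}.
\begin{enumerate}[$(1)$]
\item At each vertex $v$, choose a half-edge $h_1$ of $\Ga$ lying over the first half-edge in the cyclic order of $\Ga/\ang{\nu}$ at $v$.
\item Label the half-edges at $v$ by $(h,\overline{i})$, where $\overline{i}$ is the power of $\nu$ reached when walking around $v$. Because $\nu$ is rotation by $\bm(v)\val(v)$ steps, the cyclic order at $v$ becomes exactly the block order of $\Ga^{(c)}_r$.
\item Adjust the base points so that non-isolated edges connect equal labels. This is possible because $\Ga/\ang{\nu}$ with its isolated half-edges removed is a multiplicity-free tree: propagate the labels outward from one vertex along the tree. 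In the multiplicity-free case the rotation at each vertex is exactly one full turn per copy, so no twisting obstruction arises.
\item Over each $o_j$, the two half-edges of an edge differ by $\nu^{r/2}$, which is the gluing rule of $\Ga^{(c)}_r$.
\item Multiplicities agree, since $\bm'(v)=\bm(v)/r$ matches the definition of the multiplicity of the quotient.
\end{enumerate}

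\emph{The domestic number.} This is the main obstacle. By Theorem \ref{Thm-rep-typ}, $\gr(A_\Ga)$ is domestic; the work is counting $|\mathrm{Ba}(\overline{\gr(A_\Ga)})|$, which by Appendix \ref{appendix} equals the domestic number for a special biserial algebra. The reduced form $R_\Ga$ is multiplicity-free with a unique even cycle. The graded conditions (c.1) and (c.2) say all cycle vertices have equal graded degree and walks off the cycle decrease degree, so by Theorem \ref{grA-infinite-type} $\gr(B)$ is $2$-domestic.

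I would identify the two band classes of $\overline{\gr(B)}$ explicitly. Each winds once around the even cycle, which is made of the two copies of the path from $w_0$ to $w_1$ glued along $o_0$ and $o_1$. They are distinguished by which side of each cycle vertex the band turns at.

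Then I would use the covering $C\to A$ of Theorem \ref{second-covering} with group $\mathbb{Z}/2$, together with the map $g$ in the proof of Theorem \ref{induced-covering}, to lift these bands to $\overline{\gr(A_\Ga)}$. The lifts can also be read off directly on $\Ga^{(c)}_r$: a lifted band winds around the cycle in $\Ga^{(c)}_r$, and its label $\overline{i}$ shifts by $r/2$ each time it passes through an $o_j$-edge. A closed band must return to its starting label. This gives $\ang{\nu}$-orbit data from which I expect each of the two band classes downstairs to split into $r/2$ classes upstairs, for a total of $r$.

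Checking that no further bands appear upstairs uses condition (c.2): degree-decreasing walks make the graded relations cut off every string that would leave the cycle. This reduces the computation to the one in \cite{GLY} for the even-cycle case, applied to the finite cyclic cover.
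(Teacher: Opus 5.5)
Your parity argument and your relabelling argument for $\Ga\cong\Ga^{(c)}_r$ are sound. They are exactly the inputs the paper takes from \cite[Lemma 3.7, Subsection 5.2]{LL3}. The gap is the third claim, which you call the main obstacle and then leave as an expectation.

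\emph{The count of bands is not established.} The tools you name cannot give an exact number. The map $g$ in the proof of Theorem \ref{induced-covering} is only a surjection with finite fibres. Moreover, bands of $\overline{\gr(B)}$ must be lifted along $C\to B$, whose group has order $r$ (it is $\mathbb{Z}/2$ only when $r=2$), and then pushed down along $C\to A_\Ga$; nothing is lifted along $C\to A_\Ga$.

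Your label heuristic also misses the decisive point. Besides the jump by $r/2$ at an $o_j$-loop, a band picks up a shift $\delta\in\{0,1\}$ at every turn around a vertex $u_k$ of the path $w_0=u_0,\dots,u_m=w_1$. The shift depends on whether the substring from $f_k$ to $f_{k+1}$ crosses from one block $(h_1,\overline{i}),\dots,(h_n,\overline{i})$ of the cyclic order into the next. If the net shift $s$ per round were nonzero mod $r$, lifts would close only after $r/\gcd(r,s)$ rounds, and the count would depend on $s$.

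What makes the answer $r$ is the following.
\begin{itemize}
\item The cycle of $R_\Ga$ has even length $2m+2$, and the two passes of a band through $u_k$ occupy positions $k$ and $2m+1-k$ of opposite parity. So one pass is a direct substring and the other is the inverse of a substring of the same shape.
\item Hence the turn shifts cancel pairwise, including at $w_0,w_1$ on either side of the jumps, and the net shift per round is $r/2+r/2\equiv 0$.
\item Every lift therefore closes after one round and meets the fibre over each edge on its route in a pair of labels $\{\overline{i},\overline{i+r/2}\}$.
\item Inversion identifies the lifts started at $\overline{i}$ and at $\overline{i+r/2}$.
\end{itemize}
This gives exactly $r/2$ classes over each of the two classes of $\overline{B}$.

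\emph{The claim that no further bands appear is also only asserted.} It follows from the coverings, not from reading (c.1) and (c.2) directly on $\Ga$. Since $I_2\subseteq I_1$, we have $\mathrm{Ba}(\overline{A_\Ga})\subseteq\mathrm{Ba}(\overline{\gr(A_\Ga)})$. Suppose some band of $\overline{\gr(A_\Ga)}$ had a power containing a path of $\mathbb{P}$. Lift its square to $C$ and push the result down to $B$; the coverings respect both $I_1$ and $I_2$ by Proposition \ref{prop-induced-covering} and Theorem \ref{induced-covering}. This produces a band of $\overline{\gr(B)}$ outside $\mathrm{Ba}(\overline{B})$, contradicting $|\mathrm{Ba}(\overline{\gr(B)})|=2=|\mathrm{Ba}(\overline{B})|$.

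Once $\mathrm{Ba}(\overline{\gr(A_\Ga)})=\mathrm{Ba}(\overline{A_\Ga})$ is established, you can instead import from \cite[Section 5]{LL3} that $A_\Ga$ is $r$-domestic. This is the route the paper points to: it omits the proof, cites the method of \cite{LL3}, and Remark \ref{remark-exceptional-case}(2) records that $A_\Ga$ and $\gr(A_\Ga)$ have the same domestic number in this case.
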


\medskip
{\it Case 4: $\Ga/\ang{\nu}$ satisfies condition $(2d)$ in Theorem \ref{thm}.}

By the proof of \cite[Lemma 5.6]{LL3}, the order $r$ of the Nakayama automorphism $\nu$ of $\Ga$ is odd. Let $\Ga/\ang{\nu}=(V(\Ga/\ang{\nu}),E(\Ga/\ang{\nu}),\bm,\mathfrak{o})$, and denote by $w_0,w_1$ the exactly two vertices in $\Ga/\ang{\nu}$ which have multiplicity $2$. We construct a ${\bf bFBG}$  $\Ga^{(d)}_{r}$ as follows (see \cite[Subsection 5.2]{LL3}). The set of vertices $V(\Ga^{(d)}_{r})$ of $\Ga^{(d)}_{r}$ is equal to $V(\Ga/\ang{\nu})$, and the set of half-edges $H(\Ga^{(d)}_{r})$ of $\Ga^{(d)}_{r}$ is given by
$$H(\Ga^{(d)}_{r})=\{(h,\overline{i})\mid h\in H(\Ga/\ang{\nu}) \text{ and } \overline{i}\in\mathbb{Z}/r\mathbb{Z}\},$$
where $H(\Ga/\ang{\nu})$ is the set of half-edges of $\Ga/\ang{\nu}$. The half-edge $(h,\overline{i})$ is incident to the same vertex as the half-edge $h$. Two different half-edges $(h_1,\overline{i_1})$ and $(h_2,\overline{i_2})$ in $\Ga^{(d)}_{r}$ belong to the same edge if and only if $h_1$, $h_2$ are two different half-edges which belong to the same edge of $\Ga/\ang{\nu}$ and $\overline{i_1}=\overline{i_2}$. The multiplicity function $\bm':V(\Ga^{(d)}_{r})\rightarrow\mathbb{Q}_{>0}$ is defined by $\bm'(v)=\frac{\bm(v)}{r}$ for each $v\in V(\Ga^{(d)}_{r})=V(\Ga/\ang{\nu})$.

The orientation $\mathfrak{o}'$ of $\Ga^{(d)}_{r}$ is defined as follows. For each vertex $v\neq w_0,w_1$ in $\Ga^{(d)}_{r}$, suppose that in $\Ga/\ang{\nu}$ the cyclic ordering of the half-edges incident to vertex $v$ is given by $h_1<h_2<\cdots<h_n<h_1$. Then in $\Ga^{(d)}_{r}$ the cyclic ordering of the half-edges incident to vertex $v$ is given by
\begin{gather*}
	(h_1,\overline{0})<(h_2,\overline{0})<\cdots<(h_n,\overline{0})<(h_1,\overline{1})
	<(h_2,\overline{1})<\cdots<(h_n,\overline{1})<(h_1,\overline{2})<(h_2,\overline{2})<\cdots<(h_n,\overline{2})\\<\cdots  <(h_1,\overline{r-1})<(h_2,\overline{r-1})<\cdots<(h_n,\overline{r-1})<(h_1,\overline{0}).
\end{gather*}
For vertex $v=w_0$ or $v=w_1$, suppose that in $\Ga/\ang{\nu}$ the cyclic ordering of the half-edges incident to vertex $v$ is given by $l_1<l_2<\cdots<l_k<l_1$. Then in $\Ga^{(d)}_{r}$ the cyclic ordering of the half-edges incident to vertex $v$ is given by
\begin{gather*}
(l_1,\overline{0})<(l_2,\overline{0})<\cdots<(l_k,\overline{0})<(l_1,\overline{\frac{r+1}{2}})
<(l_2,\overline{\frac{r+1}{2}})<\cdots<(l_k,\overline{\frac{r+1}{2}})
<(l_1,\overline{r+1})<(l_2,\overline{r+1})\\<\cdots<(l_k,\overline{r+1}) <\cdots <(l_1,\overline{\frac{(r-1)(r+1)}{2}})<(l_2,\overline{\frac{(r-1)(r+1)}{2}})<\cdots<(l_k,\overline{\frac{(r-1)(r+1)}{2}})<(l_1,\overline{0}).	
\end{gather*}

\begin{Prop} \label{domestic-(d)}
Let $\Ga$ be a ${\bf bFBG}$ with $\Ga/\ang{\nu}$ satisfying condition $(2d)$ in Theorem \ref{thm}, and let $r$ be the order of the Nakayama automorphism $\nu$ of $\Ga$. Then $r$ is odd, $\Ga$ is isomorphic to $\Ga^{(d)}_{r}$, and $\gr(A_{\Gamma})$ is $r$-domestic.
\end{Prop}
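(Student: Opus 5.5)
The claim has three parts: $r$ is odd, $\Ga\cong\Ga^{(d)}_{r}$, and $\gr(A_\Ga)$ is $r$-domestic. I would handle the first two by following the reconstruction argument of \cite[Subsection 5.2]{LL3} (the proof of \cite[Lemma 5.6]{LL3}), and the third by counting bands via a covering.

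\emph{Step 1: $r$ is odd.} Here $\Ga/\ang{\nu}$ is a tree with no isolated half-edges, and the multiplicity of $v$ in the quotient is $\bm(v)r$. Since every $\ang{\nu}$-orbit of half-edges has size $r$, we have $\val_\Ga(v)=r\,\val_{\Ga/\ang{\nu}}(v)$. Hence $\bm(v)\val_\Ga(v)=\bm_{\Ga/\ang{\nu}}(v)\val_{\Ga/\ang{\nu}}(v)$, which is the number of steps of $\nu$ around $v$. At a vertex with quotient multiplicity $1$, $\nu$ rotates by one block of $\val_{\Ga/\ang{\nu}}(v)$ half-edges. At $w_0,w_1$ it rotates by two blocks. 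For $\nu$ to generate a group of order exactly $r$ acting freely with these quotient multiplicities, $2$ must be invertible mod $r$, i.e.\ $(r,2)=1$. This is the analogue of $(r,\bm_0)=1$ in Proposition \ref{rep-finite-(a)}.

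\emph{Step 2: $\Ga\cong\Ga^{(d)}_r$.} Since the underlying graph of $\Ga/\ang{\nu}$ is a tree, the preimage of each quotient edge consists of $r$ edges. Choose a vertex $v\neq w_0,w_1$ and a half-edge $h$ at it, and label it $(h,\overline 0)$. Propagate labels along the tree so that edges keep the same index and $\nu$ acts by $\overline i\mapsto\overline{i+1}$; simple connectivity of the tree guarantees there is no monodromy. At $w_0,w_1$, $\nu$ advances two blocks per step. Therefore the successive blocks in the cyclic order are indexed by multiples of $2^{-1}\equiv\frac{r+1}{2} \pmod r$, which is exactly the orientation prescribed in $\Ga^{(d)}_r$. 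The multiplicities $\bm(v)/r$ then match.

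\emph{Step 3: $r$-domesticity.} Because $\Ga/\ang{\nu}$ is a $\mathbf{BG}$, $\ang{\nu}$ is admissible, and $\Ga\to\Ga/\ang{\nu}=R_\Ga$ is a Galois covering with group $\mathbb Z/r$. By Theorem \ref{induced-covering} it induces a Galois covering of the string algebras $\overline{\gr(A_\Ga)}\to\overline{\gr(B)}$, and the map $\overline g$ on band classes is surjective. By Theorem \ref{grA-infinite-type}, $\gr(B)$ is $1$-domestic (case (2)), so there is a unique band class $b'$. The main obstacle is to show that $\overline g^{-1}(\overline{b'})$ has exactly $r$ elements. I would first compute $b'$ explicitly: it winds between $w_0$ and $w_1$, passing twice around each of $w_0$ and $w_1$ along the degree-decreasing walk. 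Next, lift $b'$ to $\Ga^{(d)}_r$ and compute the index shift it accumulates, where each passage around $w_j$ contributes a shift of $\frac{r+1}{2}$. I expect the total shift to be $0$ mod $r$, so that $b'$ lifts to closed bands rather than to a single longer band. The lifts would then be the $r$ translates by $\nu$, and these should be pairwise inequivalent because $\nu$ acts freely and no rotation or inverse identifies distinct translates. This gives $|\mathrm{Ba}(\overline{\gr(A_\Ga)})|=r$, and $\gr(A_\Ga)$ is $r$-domestic by the correspondence between band classes and one-parameter families (Appendix \ref{appendix}).
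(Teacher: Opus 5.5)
The paper gives no proof of this proposition; it says these results follow the method of \cite[Section 5]{LL3}. So I assess your argument on its own merits.

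\textbf{Steps 1 and 2 are fine.} At $w_j$ the $r\val_{\Ga/\ang{\nu}}(w_j)$ half-edges are rotated by $2\val_{\Ga/\ang{\nu}}(w_j)$ positions, so orbits have size $r/\gcd(r,2)$. Freeness of $\ang{\nu}$ then forces $r$ odd. Because the quotient is a tree, block labels propagate without conflict, and the blocks at $w_j$ receive indices $k\cdot\frac{r+1}{2}$. If the tree is the single edge $w_0$--$w_1$ there is no vertex $v\neq w_0,w_1$ to start from, but you can start anywhere. Your strategy for Step 3 is also sound: use the covering $\Ga\to\Ga/\ang{\nu}=R_\Ga$, the surjection $\overline{g}$ on band classes, and count the fibre over the unique band class of $\overline{\gr(B)}$.

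\textbf{The gap is in Step 3.} The decisive point, that the lift of $b'$ is closed, is only ``expected'', and the bookkeeping you sketch for it is wrong. This is exactly the point that decides between $r$-domestic and $1$-domestic.

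First you must pin down $b'$. The ideal defining $\overline{\gr(B)}$ is contained in the one defining $\overline{B}$, so every band of $\overline{B}$ is a band of $\overline{\gr(B)}$. Since $B$ is itself $1$-domestic, $b'$ is the band of $\overline{B}$. Describe it as follows:
\begin{enumerate}
\item Its maximal direct and inverse pieces alternate, and consecutive pieces sit at the two ends of an edge.
\item By bipartiteness of the tree, every piece at a vertex $x$ has the same sign $\epsilon(-1)^{d(w_0,x)}$.
\item A piece can start and end at the same edge only at $w_0$ or $w_1$, and there it is exactly one turn, because $P_{w_j}$ is killed in $\overline{B}$.
\item Hence $b'$ runs once from $w_0$ to $w_1$ and back along the walk $w_0=x_0,x_1,\dots,x_s=w_1$. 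It makes exactly one full turn at each $x_j$; at an intermediate $x_j$ this turn is split into two same-sign pieces. It does not make two turns at $w_0$ and $w_1$.
\end{enumerate}
In $\Ga^{(d)}_r$ a full turn shifts the index by $\frac{r+1}{2}$ at $w_j$ and by $1$ at every other vertex. So the shift of $b'$ is
\[
\epsilon\Bigl(\frac{r+1}{2}+(-1)^s\frac{r+1}{2}+\sum_{j=1}^{s-1}(-1)^j\Bigr),
\]
which is $0$ for $s$ odd and $\epsilon r$ for $s$ even. For $s$ even, the contribution $-\epsilon$ of the intermediate vertices is what cancels $2\cdot\frac{r+1}{2}\equiv 1 \pmod r$. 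Under your accounting (only passages around $w_0,w_1$, twice each) the shift would be $\pm 2\not\equiv 0$ when $s$ is even. You would then get a single lifted band of length $r$ times that of $b'$, and conclude $1$-domesticity.

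Once the shift is shown to vanish, the rest of your count goes through. A band $b$ with $f(b)$ a power of a rotation or inverse of $b'$ must be a single closed lift. The $r$ lifts through the fibre over a fixed vertex are pairwise inequivalent: an identification would project to $b'$ being a nontrivial rotation of itself or of $b'^{-1}$, which is impossible for a band. Alternatively, the covering also descends to the string algebras $\overline{A_\Ga}\to\overline{B}$, and $\overline{A_\Ga}$ is a quotient of $\overline{\gr(A_\Ga)}$ by paths. Together these give $\mathrm{Ba}(\overline{\gr(A_\Ga)})=\mathrm{Ba}(\overline{A_\Ga})$, which reduces the count to the analogous one for $A_\Ga$ as in \cite[Section 5]{LL3}. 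That count still rests on the same monodromy computation.
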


\medskip
{\it Case 5: $\Ga/\ang{\nu}$ satisfies condition $(2e)$ in Theorem \ref{thm}.}

Let $\Ga/\ang{\nu}=(V(\Ga/\ang{\nu}),E(\Ga/\ang{\nu}),\bm,\mathfrak{o})$, and denote by $r$ the order of the Nakayama automorphism of $\Ga$. Fix an edge $\{b,b'\}$ in $\Ga$ with $b$ and $b'$ its two half-edges. For each $l\in\{1,\cdots,r\}$, we construct a ${\bf bFBG}$  $\Ga^{(e)}_{rl}$ as follows.

The set of vertices $V(\Ga^{(e)}_{rl})$ of $\Ga^{(e)}_{r}$ is equal to $V(\Ga/\ang{\nu})$, and the set of half-edges $H(\Ga^{(e)}_{rl})$ of $\Ga^{(e)}_{rl}$ is given by
$$H(\Ga^{(e)}_{rl})=\{(h,\overline{i})\mid h\in H(\Ga/\ang{\nu}) \text{ and } \overline{i}\in\mathbb{Z}/r\mathbb{Z}\},$$
where $H(\Ga/\ang{\nu})$ is the set of half-edges of $\Ga/\ang{\nu}$. The half-edge $(h,\overline{i})$ is incident to the same vertex as the half-edge $h$. For two different half-edges $(h_1,\overline{i_1})$ and $(h_2,\overline{i_2})$ in $\Ga^{(e)}_{rl}$, if $h_1\neq b,b'$, then $(h_1,\overline{i_1})$ and $(h_2,\overline{i_2})$ belong to the same edge if and only if $h_1$, $h_2$ are two different half-edges which belong to the same edge of $\Ga/\ang{\nu}$ and $\overline{i_1}=\overline{i_2}$; if $h_1=b$, then $(h_1,\overline{i_1})$ and $(h_2,\overline{i_2})$ belong to the same edge if and only if $h_2=b'$ and $\overline{i_2}=\overline{i_1 +l}$; if $h_1=b'$, then $(h_1,\overline{i_1})$ and $(h_2,\overline{i_2})$ belong to the same edge if and only if $h_2=b$ and $\overline{i_2}=\overline{i_1 -l}$. The multiplicity function $\bm':V(\Ga^{(e)}_{rl})\rightarrow\mathbb{Q}_{>0}$ is defined by $\bm'(v)=\frac{\bm(v)}{r}$ for each $v\in V(\Ga^{(e)}_{rl})=V(\Ga/\ang{\nu})$.

The orientation $\mathfrak{o}'$ of $\Ga^{(e)}_{rl}$ is defined as follows. For each vertex $v$ in $\Ga^{(d)}_{r}$, suppose that in $\Ga/\ang{\nu}$ the cyclic ordering of the half-edges incident to vertex $v$ is given by $h_1<h_2<\cdots<h_n<h_1$. Then in $\Ga^{(e)}_{rl}$ the cyclic ordering of the half-edges incident to vertex $v$ is given by
\begin{gather*}
	(h_1,\overline{0})<(h_2,\overline{0})<\cdots<(h_n,\overline{0})<(h_1,\overline{1})
	<(h_2,\overline{1})<\cdots<(h_n,\overline{1})<(h_1,\overline{2})<(h_2,\overline{2})<\cdots<(h_n,\overline{2})\\<\cdots  <(h_1,\overline{r-1})<(h_2,\overline{r-1})<\cdots<(h_n,\overline{r-1})<(h_1,\overline{0}).
\end{gather*}

\begin{Prop} \label{domestic-(e)}
Let $\Ga$ be a ${\bf bFBG}$ with $\Ga/\ang{\nu}$ satisfying condition $(2e)$ in Theorem \ref{thm}, and let $r$ be the order of the Nakayama automorphism $\nu$ of $\Ga$. Then $\Ga$ is isomorphic to $\Ga^{(e)}_{rl}$ for some $l\in\{1,\cdots,r\}$.
\end{Prop}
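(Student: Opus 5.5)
The plan is to reconstruct $\Ga$ from the quotient $\Ga/\ang{\nu}$ and the order $r$ of $\nu$, as in \cite[Section 5]{LL3}. Condition $(2e)$ says $\Ga/\ang{\nu}$ is a multiplicity-free ${\bf BG}$ with no isolated half-edges. Hence $\ang{\nu}$ is admissible and acts freely on $H(\Ga)$ by \cite[Lemma 3.6]{LL3}, and the quotient map $\pi:\Ga\rightarrow\Ga/\ang{\nu}$ is a Galois covering with cyclic group $\mathbb{Z}/r\mathbb{Z}$ (\cite[Lemma 3.36]{LL2}). The vertices of $\Ga$ and of $\Ga/\ang{\nu}$ coincide, and each fibre $\pi^{-1}(h)$ is a regular $\ang{\nu}$-orbit of size $r$.

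The first step is to label half-edges vertexwise. At each vertex $v$ with cyclic ordering $h_1<\cdots<h_n<h_1$ in $\Ga/\ang{\nu}$, we have $\bm(v)=1$, so $\bm'(v)\val_{\Ga}(v)=\val_{\Ga}(v)/r=n$. Thus $\nu$ rotates by $n$ steps at $v$. Choose a lift $\tilde h_1$ of $h_1$ at $v$ and label the half-edges at $v$ in $\Ga$ cyclically as $(h_k,\overline{i})$, meaning the half-edge $in+k-1$ steps after $\tilde h_1$. Then $\nu(h_k,\overline{i})=(h_k,\overline{i+1})$ and the cyclic ordering at $v$ is exactly the one prescribed for $\Ga^{(e)}_{rl}$. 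The only freedom is the choice of $\overline{0}$ at each vertex, i.e.\ an element of $\mathbb{Z}/r\mathbb{Z}$ per vertex.

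Next, edges must be matched. Since $\iota$ commutes with $\nu$, each edge $\{h,h'\}$ of $\Ga/\ang{\nu}$ lifts to edges $\{(h,\overline{i}),(h',\overline{i+c_e})\}$ for a constant shift $c_e\in\mathbb{Z}/r\mathbb{Z}$. The data $(c_e)_e$ is a $1$-cochain on the underlying graph, and changing the base point at vertex $v$ by $t_v$ modifies $c_e$ by the coboundary $t_{w}-t_{v}$. Because $\Ga/\ang{\nu}$ has a unique cycle, we take a spanning tree obtained by deleting one cycle edge $\{b,b'\}$. Using the freedom in the $t_v$ (propagated along the tree from a root), we make $c_e=0$ for all tree edges. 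The class left is the single value $c=l$ on $\{b,b'\}$, giving exactly the gluing rule of $\Ga^{(e)}_{rl}$ with $l\in\{1,\dots,r\}$ ($l=r$ corresponding to $0$).

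The main obstacle is bookkeeping rather than a conceptual one. First, the cyclic ordering must be compatible with the rule that $\nu$ rotates by $\bm'(v)\val(v)$ steps (condition (2) of Definition \ref{Def-f-BG}); this holds because both sides equal $(\cdot,\overline{i+1})$. Second, the edge $\{b,b'\}$ fixed in the construction must be taken on the cycle (or the argument adapted), so that deleting it leaves a tree. Finally, the resulting bijection of half-edges is a graph isomorphism preserving orientation and multiplicities $\bm(v)/r$. Connectedness of $\Ga$ is not needed to reach some $l$, although it restricts which $l$ actually occur, e.g.\ forcing $(l,r)=1$; that is not part of the statement. I would also note that conditions $(2e.1)$ and $(2e.2)$ play no role here: only the shape of $\Ga/\ang{\nu}$ is used.
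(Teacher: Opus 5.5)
Your argument is correct and is essentially the reconstruction-from-the-quotient method that the paper invokes from \cite[Section 5]{LL3} instead of giving a proof. You label each vertex's half-edges by $\mathbb{Z}/r\mathbb{Z}$ and kill the edge shifts along a spanning tree, which leaves only the shift $l$ on the deleted edge; you are right that this edge must lie on the unique cycle, because the holonomy $\pm l$ around that cycle is an isomorphism invariant while a twist on a bridge is a coboundary.

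One correction to your aside: connectedness does not force $(l,r)=1$. Since $\nu$ fixes every vertex, $\Ga^{(e)}_{rl}$ has the same vertex set as $\Ga/\ang{\nu}$ and is a connected ${\bf bFBG}$ whose Nakayama automorphism has order $r$ for every $l$; for example, $\Ga^{(e)}_{22}$ in Remark \ref{remark-exceptional-case}(3) has $r=l=2$.
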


\begin{Rem}  \label{remark-exceptional-case}
	\begin{enumerate}[$(1)$]
		\item  $\Ga$ is uniquely determined by $\Ga/\ang{\nu}$ and the order $r$ of the Nakayama automorphism $\nu$ of $\Ga$ up to isomorphism in Cases 1-4.
        \item  If both $A_{\Gamma}$ and $\gr(A_{\Gamma})$ are domestic in Cases 3-5, then $A_{\Gamma}$ is $n$-domestic if and only if $\gr(A_{\Gamma})$ is $n$-domestic for some positive number $n$.	
		\item In Case 5, $\Ga$ can not be determined by $\Ga/\ang{\nu}$ and the order $r$ of the Nakayama automorphism $\nu$ of $\Ga$ up to isomorphism, and it seems that the number $n$ in $n$-domestic is not directly related to the number $r$. For example, if $r=2$ and $\Ga/\ang{\nu}$ is the multiplicity-free ${\bf BG}$
$$\tikzset{every picture/.style={line width=0.75pt}}
\begin{tikzpicture}
\draw[-] (0.47,0.14) arc (15:165:0.5);
\draw[-] (-0.48,-0.14) arc (195:345:0.5);
\node at(-0.5,0) {$v$};
\node at(0.5,0) {$w$};
\node at(-0.5,0.5) {$b$};
\node at(0.5,0.5) {$b'$};
\node at(-0.4,-0.5) {$h_v$};
\node at(0.6,-0.5) {$h_w$};
\end{tikzpicture}$$
with two vertices $v$, $w$ and four half-edges $b$, $b'$, $h_v$, $h_w$, then $\Ga$ is isomorphic to $\Ga_{21}^{(e)}$ or $\Ga_{22}^{(e)}$, where $\Ga_{21}^{(e)}$ is the ${\bf bFBG}$
$$\tikzset{every picture/.style={line width=0.75pt}}
\begin{tikzpicture}[x=15pt,y=15pt,yscale=1,xscale=1]
\node at(0,0) {$a$};
\node at(5,0) {$b$};
\draw    (-0.3,0.1) .. controls (-4,3) and (1,3) .. (4.7,0.1) ;
\draw    (0.1,0.3) .. controls (1,3) and (6,3) .. (5.2,0.3) ;
\draw    (-0.1,0.3) .. controls (-2,4) and (9,4) .. (5.3,0) ;
\draw    (0.2,0.2) .. controls (2,2) and (5,2) .. (5,0.3) ;
\end{tikzpicture}$$
with $\bm(a)=\bm(b)=\frac{1}{2}$, and $\Ga_{22}^{(e)}$ is the ${\bf bFBG}$
$$\tikzset{every picture/.style={line width=0.75pt}}
\begin{tikzpicture}[x=15pt,y=15pt,yscale=1,xscale=1]
\node at(0,0) {$a$};
\node at(5,0) {$b$};
\draw    (-0.3,0.1) .. controls (-4.5,3) and (0.5,3) .. (4.7,0.1) ;
\draw    (-0.2,0.3) .. controls (-1.5,3) and (3.5,3) .. (4.8,0.3) ;
\draw    (0.2,0.3) .. controls (1.5,3) and (6.5,3) .. (5.2,0.3) ;
\draw    (0.3,0.1) .. controls (4.5,3) and (9.5,3) .. (5.3,0.1) ;
\end{tikzpicture}$$
with $\bm(a)=\bm(b)=\frac{1}{2}$. Note that $A_{\Ga_{21}^{(e)}}\cong\gr(A_{\Ga_{21}^{(e)}})$ is $2$-domestic but $A_{\Ga_{22}^{(e)}}\cong\gr(A_{\Ga_{22}^{(e)}})$ is $4$-domestic.
		 	
	\end{enumerate}
\end{Rem}

\newpage
\appendix
\section{A statement on representation type of special biserial algebras}\label{appendix}

Throughout this appendix $\bk$ will be an algebraically closed field. All algebras are finite-dimensional algebras with $1$ over $\bk$ and all modules are finitely generated left modules unless otherwise stated.

The main objective of this appendix is to prove Theorem \ref{Theorem}, which will be used in Section \ref{sec-rep-type-graded-bFBGA}. Note that part of the results in this theorem were stated in \cite[Proposition 2.8]{GLY} but the proof there is not complete.

Denote by $\bk[t]$ the polynomial algebra in one indeterminate $t$. We first recall the various notions on representation type of an algebra (cf. \cite[Section XIX.3]{SS}).

\begin{Def}\label{rep-type}
\

\begin{enumerate}[$\bullet$]
\item An algebra $A$ is said to be of tame representation type (or shortly tame) if for each positive integer $d$, there exist $A$-$\bk[t]$-bimodules $M_1,\cdots,M_{m_d}$ that are finitely generated and free as right $\bk[t]$-modules, such that all but a finite number of the isomorphism classes of indecomposable $A$-modules of dimension $d$ are isomorphic to modules of the form $M_i\otimes_{\bk[t]}\bk[t]/(t-\lambda)$, where $i\in\{1,2,\cdots,m_d\}$ and $\lambda\in \bk$.
\item Suppose that $A$ is tame. For each positive integer $d$, denote by $\mu_{A}(d)$ the least number of such $A$-$\bk[t]$-bimodules. $A$ is said to be of polynomial growth if there exists an integer $m\geq 1$ such that $\mu_{A}(d)\leq d^m$ for each $d\geq 2$.
\item $A$ is said to be domestic if there exist a finite number of $A$-$\bk[t]$-bimodules $M_1,\cdots,M_n$ that are finitely generated and free as right $\bk[t]$-modules, such that for each positive integer $d$, all but a finite number of the isomorphism classes of indecomposable $A$-modules of dimension $d$ are isomorphic to modules of the form $M_i\otimes_{\bk[t]}V$, where $i\in\{1,2,\cdots,n\}$ and $V$ is a finite-dimensional indecomposable $\bk[t]$-module. If $N$ is the least number of such $A$-$\bk[t]$-bimodules, then $A$ is called $N$-domestic.
\end{enumerate}
\end{Def}

Note that by the second Brauer-Thrall conjecture, algebras of finite representation type are precisely the $0$-domestic algebras. According to \cite[Corollary 5.7]{C}, an algebra $A$ is domestic if and only if there exists some positive integer $n$ such that $\mu_{A}(d)\leq n$ for all positive integers $d$; moreover, if $A$ is $N$-domestic, then $N$ is the least number such that $\mu_{A}(d)\leq N$ for all positive integers $d$. In particular, every domestic algebra is tame.

Let $A=\bk Q/I$ be a special biserial algebra with $(Q,I)$ satisfying conditions $(1), (2)$ in Definition \ref{special-biserial-algebra}. Let $S$ be the sum of socles of all indecomposable projective-injective $A$-modules which are not uniserial. Then $S$ is an ideal of $A$ and $\overline{A}=A/S$ is a string algebra, which has the same representation type as $A$. Therefore to study the representation type of $A$, we may assume that $A$ is a string algebra.

According to \cite[Section II.2]{E}, a band in a string algebra $A=\bk Q/I$ is a closed word $b=\alpha_r\cdots\alpha_2\alpha_1$ in $Q$ of positive length with $\alpha_i\in Q_1$ or $\alpha_{i}^{-1}\in Q_1$ for each $i\in\mathbb{Z}/r\mathbb{Z}=\{1,2,\cdots,r\}$, such that
\begin{enumerate}[$(b1)$]
\item[$(b1)$] $\alpha_{i+1}\neq \alpha_{i}^{-1}$ for all $i\in\mathbb{Z}/r\mathbb{Z}=\{1,2,\cdots,r\}$;
\item[$(b2)$] there is no subpath of $b^n$ or $b^{-n}$ which belongs to $I$ for each positive integer $n$;
\item[$(b3)$] there is no subword $w$ of $b$ such that $b=w^k$ for some $k>1$.
\end{enumerate}
Moreover, we call a closed word $c$ of $Q$ of positive length which satisfies $(b1), (b2)$ a {\it cycle} in $A$. Define $\sim_{A}$ to be the equivalence relation on the set of cycles in $A$ which identifies each cycle with its rotations and their inverses (note that the restriction of $\sim_{A}$ on the set of bands in $A$ also becomes an equivalence relation). We denote by $\mathrm{Ba}(A)$ the set of representatives of bands in $A$ under the equivalence relation $\sim_{A}$.

In \cite{ES}, Erdmann and Skowro\'nski proved that for a self-injective special biserial algebra $A=\bk Q/I$, $A$ is representation-infinite domestic if and only if $A$ is representation-infinite of polynomial growth if and only if the cardinality of $\mathrm{Ba}(\overline{A})$ is a positive integer, where $\overline{A}$ is the associated string algebra of $A$. In this note, we generalize this statement to arbitrary special biserial algebras.

\begin{Thm}\label{Theorem}
Let $A=\bk Q/I$ be a special biserial algebra with $(Q,I)$ satisfying conditions $(1), (2)$ in Definition \ref{special-biserial-algebra}. The following are equivalent:
\begin{enumerate}[$(i)$]
\item $A$ is representation-infinite domestic;
\item $A$ is representation-infinite of polynomial growth;
\item The cardinality of $\mathrm{Ba}(\overline{A})$ is a positive integer, where $\overline{A}$ is the associated string algebra of $A$.
\end{enumerate}
Moreover, if the cardinality of $\mathrm{Ba}(\overline{A})$ is $n$, then $A$ is $n$-domestic.
\end{Thm}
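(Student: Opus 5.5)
We first reduce to the case where $A$ is a string algebra. The quotient $\overline{A}=A/S$ has the same indecomposable modules as $A$, except for the finitely many non-uniserial projective-injectives, and in particular the same representation type in each of the senses of Definition \ref{rep-type}. So it suffices to prove the theorem for a string algebra $A=\overline{A}$.

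We then use the Butler--Ringel classification. Every indecomposable $A$-module is either a string module $M(C)$, with $C\in\mathrm{St}(A)$, or a band module $M(b,m,\lambda)$, with $b\in\mathrm{Ba}(A)$, $m\geq1$ and $\lambda\in\bk^*$. For a fixed band $b$, the modules $M(b,m,\lambda)$ are obtained as $M_b\otimes_{\bk[t]}V$, where $M_b$ is the $A$-$\bk[t,t^{-1}]$-bimodule attached to $b$, restricted along $\bk[t]\to \bk[t,t^{-1}]$. One can also make $M_b$ into an $A$-$\bk[t]$-bimodule that is free of rank $\dim$ over $\bk[t]$ by the standard trick of putting $t$ on one arrow of $b$. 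Taking $V=\bk[t]/(t-\lambda)^m$ with $\lambda\neq0$ then recovers $M(b,m,\lambda)$.

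\textbf{(iii)$\Rightarrow$(i) and the count.} Suppose $|\mathrm{Ba}(A)|=n$ is finite and positive. Then $A$ is representation-infinite, since band modules occur for infinitely many $\lambda$. In a fixed dimension $d$ there are only finitely many strings of length less than $d$, so all but finitely many indecomposables of dimension $d$ are band modules. Each of these is of the form $M_b\otimes V$ for one of the $n$ bimodules. Hence $A$ is domestic with at most $n$ bimodules.

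To see that $A$ is not $N$-domestic for any $N<n$, we use the characterization from \cite{C} recalled above: $N$ must satisfy $\mu_A(d)\leq N$ for all $d$. Choose $d$ to be a common multiple of the lengths of the $n$ bands. In that dimension there are $n$ pairwise non-isomorphic one-parameter families $M(b,d/\ell(b),\lambda)$. Each family needs its own bimodule, because a single bimodule $M\otimes\bk[t]/(t-\lambda)$ parametrizes only one band up to finitely many exceptions; this can be read off from the top and socle of the modules. Therefore $\mu_A(d)\geq n$, and $A$ is $n$-domestic.

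\textbf{(i)$\Rightarrow$(ii)} is standard: a domestic algebra has $\mu_A$ bounded, so it has polynomial growth.

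\textbf{(ii)$\Rightarrow$(iii).} This is the step we expect to be the main obstacle. If $\mathrm{Ba}(A)$ is empty, then only finitely many string modules exist in total (otherwise we get infinitely many strings), so $A$ is representation-finite by the string/band dichotomy. This contradicts representation-infiniteness and gives a positive count.

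If $\mathrm{Ba}(A)$ is infinite, we follow the argument of \cite{ES}. Since $Q$ is finite, infinitely many bands force two distinct primitive bands $b_1,b_2$ that pass through a common vertex and can be concatenated. Then every word in $b_1,b_2$ (after rotation, if necessary) is again a band. We must check this carefully: the concatenation points must satisfy $(b1)$ and $(b2)$, which is where the special biserial conditions are used to choose the gluing orientations.

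The number of pairwise inequivalent primitive words of length $k$ in two letters grows exponentially. Setting $d\approx k\max(\ell(b_1),\ell(b_2))$, we get that $\mu_A(d)$ grows exponentially in $d$. This contradicts polynomial growth.

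The delicate point is producing the two composable bands from an infinite band set. We would argue via a pigeonhole argument on long bands: a band longer than $2|Q_1|$ revisits some arrow with the same orientation, which lets us split it into two closed cycles, and a pair of such cycles that generate a free monoid of bands gives the required $b_1,b_2$.
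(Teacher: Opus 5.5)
Your outline has the same architecture as the paper's proof: reduction to $\overline{A}$, the bimodules $M_b$, the upper bound from finiteness of strings in each dimension, $d$ a common multiple of the band lengths, and exponentially many cycles built from two glued bands. However, the two steps that carry the weight are not established.

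The first gap is the lower bound on $\mu_A(d)$. You need it both for ``not $N$-domestic for $N<n$'' and for the exponential growth in (ii)$\Rightarrow$(iii). The required statement is that pairwise inequivalent cycles $c_1,\dots,c_m$ of length $d$ force $\mu_A(d)\geq m$. Equivalently, a single bimodule $M$ cannot produce, via $M\otimes_{\bk[t]}\bk[t]/(t-\lambda)$, infinitely many members of two different band families. Your justification is that this ``can be read off from the top and socle'', and that fails in general. The top and socle of $M(b,m,\lambda)$ only record $m$ times the multisets of peaks and deeps of $b$. If $b_1,b_2$ are glued at a common peak, every word with prescribed numbers of copies of $b_1$ and $b_2$ gives band modules with the same dimension vector, top and socle, yet exponentially many of these words are pairwise inequivalent. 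You would also need the top of $M\otimes_{\bk[t]}\bk[t]/(t-\lambda)$ to be constant off a finite set of $\lambda$, which you do not address. When $\mathrm{Ba}(A)$ is finite the top argument can be repaired, since distinct bands then have disjoint sets of peaks. But that is not in your proposal, and no such repair exists in the infinite case. The paper's missing ingredient is Lemma~\ref{constructible}. By Chevalley's theorem, the set of $\lambda$ for which $M_i\otimes_{\bk[t]}\bk[t]/(t-\lambda)\cong M_{b(c_j)}\otimes_{\bk[t]}\bk[t]/(t-\mu)^{m(c_j)}$ for some $\mu\in\bk^{*}$ is constructible, hence finite or cofinite. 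Two such cofinite sets for the same $i$ would intersect, contradicting Lemma~\ref{equivalent-cycle}. A pigeonhole argument then yields Lemma~\ref{number-of-band-and-bimodules}.

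The second gap is the construction of two composable bands. Passing through a common vertex does not by itself allow concatenation. Your pigeonhole on a repeated arrow also fails to split a band into two cycles once $I$ contains monomial relations of length at least $3$. Suppose one occurrence of $\beta$ lies in a directed piece $\beta\alpha$ followed by an inverse letter, another lies in $\gamma\beta$ preceded by an inverse letter, and $\gamma\beta\alpha\in I$. Then one of the two closed words obtained by cutting at these occurrences has a square containing $\gamma\beta\alpha$, so it is not a cycle. The paper instead pigeonholes on peaks, i.e.\ vertices $s(\alpha_i)$ with $\alpha_i\in Q_1$ and $\alpha_{i-1}^{-1}\in Q_1$. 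Infinitely many bands give two inequivalent bands $b_1,b_2$ with a common peak $x$. Rotated to start and end at $x$, each uses both arrows starting at $x$, and no directed subword passes through a peak, so $b_1b_2$ or $b_1^{-1}b_2$ is a cycle. Then $c_1=b_1^{\ell(b_2)}$ and $c_2=b_2^{\ell(b_1)}$ have the same length $L$, all $2^n$ words of $n$ letters in $c_1,c_2$ are cycles, and at least $2^{n-1}/(nL)$ of them are pairwise inequivalent (Lemma~\ref{not-of-polynomial-growth}). Equalizing the lengths in this way also fixes your imprecise choice $d\approx k\max(\ell(b_1),\ell(b_2))$.
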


\medskip
The following lemma is a remark in \cite[Subsection 2.1]{Sk}. For the convenience of the reader, we include a proof here.

 \begin{Lem} \label{constructible}
Let $A$ be a finite-dimensional $\bk$-algebra and let $M_1,M_2$ be $A$-$\bk[t]$-bimodules that are finitely generated free as right $\bk[t]$-modules. For any two positive integers $n_1$ and $n_2$, the set $X=\{\lambda_1\in \bk\mid M_1\otimes_{\bk[t]}\bk[t]/(t-\lambda_1)^{n_1}\cong M_2\otimes_{\bk[t]}\bk[t]/(t-\lambda_2)^{n_2}$ for some $\lambda_2\in \bk^{*}\}$ is a constructible subset of $\bk$, and hence finite or cofinite in $\bk$.
\end{Lem}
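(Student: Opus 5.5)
We will show that the isomorphism condition defining $X$ is constructible, as the projection of a constructible subset of $\bk\times\bk^{*}$.

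First, each tensor product is a family of matrix representations over the parameter. Since $M_j$ is free of rank $r_j$ over $\bk[t]$, fix a $\bk[t]$-basis. Each element $a$ of a fixed finite $\bk$-basis of $A$ then acts on $M_j$ by a $\bk[t]$-linear map, that is, by an $r_j\times r_j$ matrix $\Phi_j(a)$ with polynomial entries in $t$. Now $\bk[t]/(t-\lambda)^{n}$ has $\bk$-basis $1,(t-\lambda),\dots,(t-\lambda)^{n-1}$. With respect to the induced basis, $M_j\otimes_{\bk[t]}\bk[t]/(t-\lambda)^{n_j}$ is $\bk^{r_jn_j}$ and $a$ acts by the matrix $\Phi_j(a)(\lambda+J)$, where $J$ is the nilpotent Jordan block of size $n_j$ tensored with the identity. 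The entries of this matrix are polynomial functions of $\lambda$. If $r_1n_1\neq r_2n_2$, the two modules never have the same dimension, so $X=\varnothing$ and we are done. Otherwise put $N=r_1n_1=r_2n_2$.

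Next, write the isomorphism condition as an algebraic set. Consider
$$Z=\{(\lambda_1,\lambda_2,g)\in \bk\times\bk^{*}\times GL_N(\bk)\mid g\,\Phi_1(a)(\lambda_1+J)=\Phi_2(a)(\lambda_2+J)\,g \text{ for all basis elements } a\}.$$
These equations are polynomial in all coordinates. Moreover $GL_N(\bk)\times\bk^{*}$ is an affine variety, described by the coordinates together with an inverse of $\det g$ and of $\lambda_2$. Hence $Z$ is a closed subset of the affine variety $\bk\times\bk^{*}\times GL_N(\bk)$.

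The set $X$ is the image of $Z$ under the projection to the first coordinate $\lambda_1$. This projection is a morphism of varieties, so by Chevalley's theorem $X$ is constructible in $\bk=\mathbb{A}^1$.

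Finally, $\mathbb{A}^1$ is irreducible of dimension one, and its proper closed subsets are finite. A constructible subset is a finite union of locally closed sets. Each locally closed set is either finite or open dense, and a nonempty open set is cofinite. Hence $X$ is finite or cofinite.

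The only point needing care is the first step, namely that the action on the truncated tensor product depends polynomially on $\lambda$. This is guaranteed by the choice of basis $(t-\lambda)^k$, because the resulting structure constants are polynomial in $\lambda$.
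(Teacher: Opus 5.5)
Your proof is correct and is essentially the paper's argument: both substitute the Jordan block $\lambda I+J$ for $t$ in the polynomial action matrices, form the closed incidence set of parameters and invertible intertwiners, and project it to the $\lambda_1$-line using Chevalley's theorem. The one difference is that you parametrize modules by the action matrices of a $\bk$-basis of $A$ together with $GL_N$, instead of using the representation variety $\mathrm{Mod}(A,\mathbf{d})$ of a bound quiver with $\mathrm{GL}(\mathbf{d})$; this lets you skip the paper's Morita reduction to the basic case (its Step 2).
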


\begin{proof}
{\it Step 1: Assume that $A$ is basic}

Suppose that $A$ is isomorphic to $\bk Q/I$ for some quiver with relations $(Q,I)$. For each dimension vector ${\bf d}=(d_x)_{x\in Q_0}$ of $A$, denote by Mod$(A,{\bf d})$ the closed subset of
$$\prod_{\alpha\in Q_1}\mathrm{Hom}_{\bk}(\bk^{d_{s(\alpha)}},\bk^{d_{t(\alpha)}})$$
given by the relations in $I$ (here $s(\alpha)$ and $t(\alpha)$ denote the start and the end of the arrow $\alpha$ respectively), and let GL$({\bf d})=\prod_{x\in Q_0}\mathrm{GL}(d_x,\bk)$. Each element $\xi=(\xi_{\alpha})_{\alpha\in Q_1}\in$Mod$(A,{\bf d})$ gives a representation $R(\xi)$ of $A$ of dimension vector ${\bf d}$. Moreover, GL$({\bf d})$ acts on Mod$(A,{\bf d})$ by conjugation, and for $\xi,\eta\in$Mod$(A,{\bf d})$, $R(\xi)$ is isomorphic to $R(\eta)$ if and only if $\xi,\eta$ belong to the same GL$({\bf d})$-orbit.

For each $x\in Q_0$, denote by $d_{i,x}$ the rank of $e_x M_i$ as a  right $\bk[t]$-module, where $i=1,2$ and $e_x$ is the idempotent of $\bk Q/I$ corresponding to $x$. We may assume that $n_1 d_{1,x}=n_2 d_{2,x}$ for all $x\in Q_0$, otherwise $X=\varnothing$. Let $d_x=n_1 d_{1,x}$ for each $x\in Q_0$ and ${\bf d}=(d_x)_{x\in Q_0}$ be a dimension vector of $A$. For each $i\in\{1,2\}$, $M_i$ is isomorphic to some (may be infinite-dimensional) representation $L_i$ of $\bk Q/I$, where for each $x\in Q_0$, $(L_i)_x=\bk[t]^{d_{i,x}}$, and for each $\alpha\in Q_1$, $(L_i)_{\alpha}$ is given by the matrix $(f_{pq}^{(i)})_{d_{i,t(\alpha)}\times d_{i,s(\alpha)}}$ with elements in $\bk[t]$. Then for each $\lambda_i\in\bk$, $M_i\otimes_{\bk[t]}\bk[t]/(t-\lambda_i)^{n_i}$ is isomorphic to the representation $V_{i,\lambda_i}$ of $\bk Q/I$, where for each $x\in Q_0$, $(V_{i,\lambda_i})_x=\bk^{d_x}$, and for each $\alpha\in Q_1$, $(V_{i,\lambda_i})_{\alpha}$ is given by the block matrix $(f_{pq}^{(i)}(J_{n_i}(\lambda_i)))_{d_{i,t(\alpha)}\times d_{i,s(\alpha)}}$, where $J_{n_i}(\lambda_i)$ is the matrix
$$\begin{pmatrix}
\lambda_i & & & \\
1 & \lambda_i & & \\
& \ddots & \ddots & \\
& & 1 & \lambda_i
\end{pmatrix}_{n_i\times n_i}.$$
By considering $V_{i,\lambda_i}$ as an element in Mod$(A,{\bf d})$, we see that the map $\phi_i:\mathbb{A}^1\rightarrow$Mod$(A,{\bf d})$, $\lambda_i\mapsto V_{i,\lambda_i}$ is a morphism of varieties.

Consider the morphism of varieties
$$\psi:\mathbb{A}^1\times\mathrm{GL}({\bf d})\times(\mathbb{A}^1-\{0\})\rightarrow\mathrm{Mod}(A,{\bf d})\times\mathrm{Mod}(A,{\bf d}), \quad (\lambda_1,g,\lambda_2)\mapsto (\phi_{1}(\lambda_1),g\cdot\phi_{2}(\lambda_2)).$$
Denote by $\pi$ the projection
$$\mathbb{A}^1\times\mathrm{GL}({\bf d})\times(\mathbb{A}^1-\{0\})\rightarrow\mathbb{A}^1, \quad (\lambda_1,g,\lambda_2)\mapsto\lambda_1.$$
Then $X=\pi(\psi^{-1}(\Delta))$, where $\Delta$ is the closed subset $\{(\xi,\xi)\mid\xi\in\mathrm{Mod}(A,{\bf d})\}$ of $\mathrm{Mod}(A,{\bf d})\times\mathrm{Mod}(A,{\bf d})$. By Chevalley's Theorem (see \cite[Exercises 1.9.6.(1)]{Sp}), $X$ is a constructible subset of $\bk$.

\medskip
{\it Step 2: Assume that $A$ is not basic.}

Let $B$ be the basic algebra of $A$, so there exist a left-right projective $B$-$A$-bimodule $P$ and a left-right projective $A$-$B$-bimodule $Q$ such that $P\otimes_{A}Q\cong B$ as $B$-$B$-bimodules and $Q\otimes_{B}P\cong A$ as $A$-$A$-bimodules. Then
$$X=\{\lambda_1\in\bk\mid P\otimes_{A}M_1\otimes_{\bk[t]}\bk[t]/(t-\lambda_1)^{n_1}\cong P\otimes_{A}M_2\otimes_{\bk[t]}\bk[t]/(t-\lambda_2)^{n_2} \text{ for some } \lambda_2\in\bk^{*}\},$$
where $P\otimes_{A}M_1$, $P\otimes_{A}M_2$ are $B$-$\bk[t]$-bimodules that are finitely generated free as right $\bk[t]$-modules. Then by Step 1 we see that $X$ is a constructible subset of $\bk$.
\end{proof}

In the following of this appendix we always assume that $A=\bk Q/I$ is a string algebra unless otherwise stated. For each band $b$ in $A$ we construct an $A$-$\bk[t]$-bimodule $M_b$ which is finitely generated free as a right $\bk[t]$-module as follows: Let $b=\alpha_r\cdots\alpha_2\alpha_1$ with $\alpha_i\in Q_1$ or $\alpha_{i}^{-1}\in Q_1$ for each $i\in\mathbb{Z}/r\mathbb{Z}=\{1,2,\cdots,r\}$. For each vertex $x$ of $Q$, define
$$(M_b)_x=\bigoplus\limits_{s(\alpha_i)=x}V_i,$$
where $V_i=\bk[t]$ for each $i\in\mathbb{Z}/r\mathbb{Z}$ and $s(\alpha_i)$ is the start of $\alpha_i$, and for each arrow $\alpha$ of $Q$, define
\begin{equation*}
(M_{b})_{\alpha}|_{V_i}=\begin{cases}
V_i\xrightarrow{id}V_{i+1}, \text{ if } \alpha=\alpha_i \text{ and } i\neq r; \\
V_{i}\xrightarrow{id}V_{i-1}, \text{ if } \alpha=\alpha_{i-1}^{-1} \text{ and } i\neq 1; \\
V_r\xrightarrow{t}V_{1}, \text{ if } \alpha=\alpha_r \text{ and } i=r; \\
V_1\xrightarrow{t}V_{r}, \text{ if } \alpha=\alpha_r^{-1} \text{ and } i=1; \\
0, \text{ otherwise}.
\end{cases}
\end{equation*}
Note that for each positive integer $n$ and any $\lambda\in\bk^{*}$, $M_b\otimes_{\bk[t]}\bk[t]/(t-\lambda)^n$ is just the band module $M(b,n,\lambda)$ given by the band $b$ and the finite-dimensional indecomposable $\bk[t,t^{-1}]$-module $\bk[t]/(t-\lambda)^n$ (see \cite[Section II.3]{E}). This also shows that each band module over $A$ has this form.

\medskip

Let $c$ be a cycle in $A$. Then $c$ can be written uniquely as the form $b(c)^{m(c)}$, where $b(c)$ is a band in $A$ and $m(c)$ is a positive integer (it suffices to take $m(c)$ as the maximal positive integer with $c=b^{m(c)}$ for some subword $b$ of $c$, and take $b(c)$ as $b$).

\begin{Lem}\label{equivalent-cycle}
Two cycles $c_1$, $c_2$ in $A$ are equivalent if and only if the associated bands $b(c_1)$, $b(c_2)$ are equivalent and $m(c_1)=m(c_2)$.
\end{Lem}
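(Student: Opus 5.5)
We have to prove Lemma \ref{equivalent-cycle}: cycles $c_1,c_2$ are equivalent under $\sim_A$ if and only if $b(c_1)\sim_A b(c_2)$ and $m(c_1)=m(c_2)$. The plan is to work with the combinatorics of words and use the uniqueness of the decomposition $c=b(c)^{m(c)}$, which rests on the standard fact that a word is a proper power of a shorter word in at most one primitive way. Concretely, if $c=u^p=w^q$ with $u,w$ primitive (no proper power), then $u=w$ and $p=q$. This is the Lyndon--Schützenberger / Fine--Wilf type statement for finite sequences. I will use it freely, noting that a word $c$ of length $n$ equal to $w^k$ means $c$ is invariant under cyclic shift by $n/k$. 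The primitive root then corresponds to the minimal period dividing $n$, and it is unique.

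For the ``if'' direction, suppose $b(c_2)$ is a rotation of $b(c_1)$ or of $b(c_1)^{-1}$, say $b(c_2)=\rho(b(c_1)^{\pm1})$, and $m:=m(c_1)=m(c_2)$. If $b(c_1)=xy$ and $b(c_2)=yx$, then $c_2=(yx)^m$ is the rotation $y\,(xy)^{m-1}x$ of $c_1=(xy)^m$. Inverses behave the same way, since $(b^{-1})^m=(b^m)^{-1}$. Hence $c_1\sim_A c_2$.

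For the ``only if'' direction, it suffices to treat a single rotation and a single inversion, because $\sim_A$ is generated by these operations. For inversion, $c^{-1}=(b(c)^{-1})^{m(c)}$, and $b(c)^{-1}$ is primitive: if $b(c)^{-1}=w^k$ then $b(c)=(w^{-1})^k$, so $k=1$. By uniqueness of the primitive root, $b(c^{-1})=b(c)^{-1}\sim_A b(c)$ and $m(c^{-1})=m(c)$. For rotation, write $c=b^m$ with $b=b(c)$ of length $\ell$, and rotate $c$ by $j$ steps. Writing $j=q\ell+s$ with $0\le s<\ell$ and $b=xy$ with $|y|=s$ (in the paper's right-to-left convention, splitting off the appropriate end), the rotated word is $(yx)^m$. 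Since $yx$ is a rotation of a primitive word, it is primitive: rotation preserves the property of being a proper power, because $yx=w^k$ implies that $xy$ is $k$-th power of a rotation of $w$. Uniqueness then gives $b(\text{rotated } c)=yx\sim_A b$ and $m$ unchanged. The remaining check is that every intermediate rotation and inverse of a cycle is again a cycle, so that $b(\cdot)$ is defined. This holds because conditions $(b1)$ and $(b2)$ are invariant under rotation and inversion; for $(b2)$, subwords of powers of a rotation are subwords of powers of the original.

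I expect the only real obstacle to be stating the uniqueness of the primitive root cleanly. I would prove it via periods: if $c=u^p=w^q$ with $|c|=n$, then $c$ is invariant under cyclic shifts by $|u|$ and by $|w|$, hence by $g=\gcd(|u|,|w|)$. So $c=z^{n/g}$ with $|z|=g$. Then $u=z^{|u|/g}$, and primitivity of $u$ forces $|u|=g$; likewise $|w|=g$, so $u=w=z$ and $p=q$.
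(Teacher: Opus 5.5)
Your proposal is correct and follows essentially the same route as the paper. The paper also reduces the forward direction to a single rotation and a single inversion, notes that the rotated (resp.\ inverted) band is again a band, and reads off $b(c_2)$ and $m(c_2)=m(c_1)$; it gets the converse by raising the rotated or inverted band to the common power $m$. The only difference is that you explicitly prove uniqueness of the primitive root via the gcd-of-periods argument, whereas the paper uses this fact implicitly when it asserts that every cycle is uniquely of the form $b(c)^{m(c)}$.
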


\begin{proof}
''$\Rightarrow$'' We first assume that $c_2$ is obtained from $c_1$ via a rotation. Let $c_1=\alpha_r\cdots\alpha_2\alpha_1$ and $c_2=\alpha_1\alpha_r\cdots\alpha_2$, where $\alpha_i\in Q_1$ or $\alpha_{i}^{-1}\in Q_1$ for each $i$. Suppose that $b(c_1)=\beta_l\cdots\beta_2\beta_1$ with $\beta_i\in Q_1$ or $\beta_{i}^{-1}\in Q_1$ for each $i$, then it is straightforward to show that $c_2=(\beta_1\beta_l\cdots\beta_2)^{m(c_1)}$. Since $\beta_1\beta_l\cdots\beta_2$ is a band in $A$, we have $b(c_2)=\beta_1\beta_l\cdots\beta_2$ and $m(c_2)=m(c_1)$.

Now we assume that $c_2$ is the inverse of $c_1$. Since $c_1=b(c_1)^{m(c_1)}$, $c_2=c_{1}^{-1}=(b(c_1)^{-1})^{m(c_1)}$. Since $b(c_1)^{-1}$ is a band in $A$, we have $b(c_2)=b(c_1)^{-1}$ and $m(c_2)=m(c_1)$.

''$\Leftarrow$'' If $b(c_2)$ is obtained from $b(c_1)$ via a rotation (resp. an inverse), since $m(c_1)=m(c_2)$, $c_2$ is also obtained from $c_1$ via a rotation (resp. an inverse).
\end{proof}

For each cycle $c$ in $A$, we can associate a family of band modules $\mathscr{B}_c$ over $A$ as follows: $$\mathscr{B}_c=\{M(b(c),m(c),\lambda)\mid\lambda\in\bk^{*}\}.$$
Here $M(b(c),m(c),\lambda)$ is the band module given by the band $b(c)$ and the finite-dimensional indecomposable $\bk[t,t^{-1}]$-module $\bk[t]/(t-\lambda)^{m(c)}$, which is isomorphic to $M_{b(c)}\otimes_{\bk[t]}\bk[t]/(t-\lambda)^{m(c)}$. By Lemma \ref{equivalent-cycle}, two cycles $c_1$, $c_2$ are equivalent if and only if the they are associated to the same family of band modules.

\begin{Lem}\label{number-of-band-and-bimodules}
If there are $n$ pairwise non-equivalent cycles $c_1,c_2,\cdots,c_n$ of the same length $d$ in $A$, then $\mu_{A}(d)\geq n$. Especially, if the cardinality of $\mathrm{Ba}(A)$ is $n$, then $\mu_{A}(d)\geq n$ for some positive integer $d$.
\end{Lem}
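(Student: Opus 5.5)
The plan is to produce, from the $n$ cycles, $n$ pairwise non-isomorphic indecomposable $A$-modules of a common dimension, one for each cycle and each generic value of a parameter. We then use Lemma \ref{constructible} to show that no single bimodule can parametrize cofinitely many modules from two different families. Fix $d$, the common length of the cycles. For each cycle $c_i$ consider the family $\mathscr{B}_{c_i}=\{M(b(c_i),m(c_i),\lambda)\mid \lambda\in\bk^*\}$. Its modules have dimension $|b(c_i)|\cdot m(c_i)=d$, since $c_i=b(c_i)^{m(c_i)}$. By \cite[Section II.3]{E}, band modules $M(b,m,\lambda)$ are indecomposable. Two of them are isomorphic if and only if the bands are equivalent, with $\lambda$ transformed accordingly ($\lambda$ or $\lambda^{-1}$), and the Jordan sizes agree. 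Together with Lemma \ref{equivalent-cycle}, this shows that the families $\mathscr{B}_{c_1},\dots,\mathscr{B}_{c_n}$ are pairwise disjoint up to isomorphism, and each is infinite up to isomorphism.

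Next, suppose $M_1,\dots,M_m$ with $m=\mu_A(d)$ are $A$-$\bk[t]$-bimodules, free of finite rank over $\bk[t]$, such that all but finitely many indecomposables of dimension $d$ are of the form $M_j\otimes_{\bk[t]}\bk[t]/(t-\mu)$. Each family $\mathscr{B}_{c_i}$ is infinite, so there is some $j=\sigma(i)$ for which infinitely many members of $\mathscr{B}_{c_i}$ are isomorphic to modules $M_j\otimes\bk[t]/(t-\mu)$. We apply Lemma \ref{constructible} to $M_{c_i}:=M_{b(c_i)}$ with $n_1=m(c_i)$, and $M_j$ with $n_2=1$. One needs $\mu\neq 0$ here; this costs only one value, or we shift the parameter. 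The lemma shows that the set $X_{i,j}$ of $\lambda$ with $M(b(c_i),m(c_i),\lambda)\cong M_j\otimes\bk[t]/(t-\mu)$ for some $\mu$ is constructible, hence finite or cofinite. It is infinite, so it is cofinite in $\bk$.

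It remains to show that $\sigma$ is injective, which gives $m\ge n$. Suppose $\sigma(i)=\sigma(i')=j$ with $i\ne i'$. Consider the reverse set $Y\subseteq\bk$ of those $\mu$ for which $M_j\otimes\bk[t]/(t-\mu)$ lies in $\mathscr{B}_{c_i}$. It is constructible by the same Chevalley argument as in the proof of Lemma \ref{constructible}, applied with the roles swapped. It is infinite: cofinitely many $\lambda$ occur, and distinct $\lambda$ give non-isomorphic modules, so they need distinct $\mu$. Hence $Y$ is cofinite, and likewise so is the corresponding set $Y'$ for $c_{i'}$. Then $Y\cap Y'\neq\varnothing$, which would give a module lying in both $\mathscr{B}_{c_i}$ and $\mathscr{B}_{c_{i'}}$, contradicting the disjointness established above. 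The main obstacle is this reverse constructibility. Lemma \ref{constructible} as stated projects onto the $\lambda_1$ coordinate. I would either restate its proof with the projection onto $\lambda_2$, allowing $\lambda_2\in\bk$ and $\lambda_1\in\bk^*$, or simply swap the labels $M_1$ and $M_2$, handling the excluded value $0$ by discarding finitely many points.

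For the final assertion, let $b_1,\dots,b_n$ represent $\mathrm{Ba}(A)$ and take $L$ to be a common multiple of their lengths. The cycles $c_i=b_i^{L/|b_i|}$ all have length $L$ and are pairwise non-equivalent by Lemma \ref{equivalent-cycle}. The first part with $d=L$ then gives $\mu_A(L)\ge n$.
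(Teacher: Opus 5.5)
Your argument is correct and is essentially the paper's. Your sets $Y$ (in the $M_j$-side parameter, cofinite by pigeonhole plus constructibility, then intersected to contradict disjointness of the families $\mathscr{B}_{c_i}$) are exactly the paper's $X_{j,i}$, and they come directly from Lemma \ref{constructible} as stated with $M_1=M_j$, $n_1=1$, $M_2=M_{b(c_i)}$, $n_2=m(c_i)$, so the ``main obstacle'' you flag does not arise and your first constructibility step in the band parameter is superfluous, since $Y$ is already infinite by the choice of $\sigma(i)$; your common-multiple treatment of the final clause supplies a step the paper leaves implicit.
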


\begin{proof}
Suppose that $\mu_{A}(d)=l<n$. Then there exist $A$-$\bk[t]$-bimodules $M_1,\cdots,M_{l}$ that are finitely generated and free as right $\bk[t]$-modules, such that all but a finite number of the isomorphism classes of indecomposable $A$-modules of dimension $d$ are isomorphic to modules of the form $M_i\otimes_{\bk[t]}\bk[t]/(t-\lambda)$, where $i\in\{1,2,\cdots,l\}$ and $\lambda\in\bk$. Let
$$X_{i,j}=\{\lambda\in\bk\mid M_i\otimes_{\bk[t]}\bk[t]/(t-\lambda)\cong M_{b(c_j)}\otimes_{\bk[t]}\bk[t]/(t-\mu)^{m(c_j)} \text{ for some }\mu\in\bk^{*} \}$$
for all $i\in\{1,2,\cdots,l\}$ and $j\in\{1,2,\cdots,n\}$. According to Lemma \ref{constructible}, each $X_{i,j}$ is a constructible subset of $\bk$, and hence finite or cofinite in $\bk$. For each $j\in\{1,2,\cdots,n\}$, since all but a finite number of modules in $\mathscr{B}_{c_j}=\{M_{b(c_j)}\otimes_{\bk[t]}\bk[t]/(t-\mu)^{m(c_j)}\mid\mu\in\bk^{*}\}$ are isomorphic to modules of the form $M_i\otimes_{\bk[t]}\bk[t]/(t-\lambda)$, where $i\in\{1,2,\cdots,l\}$ and $\lambda\in\bk$, there exists some $i(j)\in\{1,2,\cdots,l\}$ such that $X_{i(j),j}$ is cofinite in $\bk$. Since $l<n$, there exist some $1\leq j_1<j_2\leq n$ with $i(j_1)=i(j_2)$. Let $e=i(j_1)=i(j_2)$. Then both $X_{e,j_1}$ and $X_{e,j_2}$ are cofinite in $\bk$, and therefore $X_{e,j_1}\cap X_{e,j_2}\neq\varnothing$. If $\lambda\in X_{e,j_1}\cap X_{e,j_2}$, then there exist $\mu_1,\mu_2\in\bk^{*}$ such that
$$M_{b(c_{j_1})}\otimes_{\bk[t]}\bk[t]/(t-\mu_1)^{m(c_{j_1})}\cong M_e\otimes_{\bk[t]}\bk[t]/(t-\lambda)\cong M_{b(c_{j_2})}\otimes_{\bk[t]}\bk[t]/(t-\mu_2)^{m(c_{j_2})}.$$
Therefore $b(c_{j_1})$ and $b(c_{j_2})$ are equivalent bands and $m(c_{j_1})=m(c_{j_2})$. By Lemma \ref{equivalent-cycle}, $c_{j_1}$ and $c_{j_2}$ are equivalent cycles, a contradiction.
\end{proof}

Part of the following result was stated in \cite[Proposition 2.8]{GLY} but the proof there is not complete, here we give a proof based on Lemma \ref{number-of-band-and-bimodules}.

\begin{Prop}\label{number-of-band-and-representation-type}
Let $A=\bk Q/I$ be a string algebra. Then $A$ is representation-infinite domestic if and only if the cardinality of $\mathrm{Ba}(A)$ is a positive integer. Moreover, if the cardinality of $\mathrm{Ba}(A)$ is $n$, then $A$ is $n$-domestic.
\end{Prop}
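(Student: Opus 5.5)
We prove the two directions separately, relying on the structure theory of string algebras (Butler--Ringel) and on Lemma \ref{number-of-band-and-bimodules}. Recall the Butler--Ringel classification of indecomposable modules over a string algebra $A$: they are the string modules $M(C)$, with $C$ running over $\mathrm{St}(A)$, and the band modules $M(b,n,\lambda)$, with $b\in\mathrm{Ba}(A)$, $n\geq 1$ and $\lambda\in\bk^{*}$. Moreover, $A$ is representation-finite if and only if $\mathrm{Ba}(A)=\varnothing$. Indeed, if there are no bands, a standard argument (see \cite{E}) shows that there are only finitely many strings, since any sufficiently long string must repeat a configuration and hence produce a band. Conversely, a band yields infinitely many pairwise non-isomorphic band modules.

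\emph{Sufficiency.} Assume $\mathrm{Ba}(A)=\{b_1,\dots,b_n\}$ with $n\geq 1$. Then $A$ is representation-infinite by the remark above. For the bimodules we take $M_{b_1},\dots,M_{b_n}$ as constructed above. Every band module is of the form $M_{b_i}\otimes_{\bk[t]}\bk[t]/(t-\lambda)^m$ with $\lambda\neq 0$. The string modules remain to be handled: we must show that for each $d$ there are only finitely many strings of length $d$, which is true because $Q$ is finite and each string is a word of bounded length. Hence, in each dimension, all but finitely many indecomposables are band modules. This shows that $A$ is domestic with at most $n$ bimodules, so $A$ is $N$-domestic for some $N\leq n$.

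\emph{Lower bound and necessity.} By the characterization \cite[Corollary 5.7]{C}, $N$ is the least integer with $\mu_A(d)\leq N$ for all $d$. To obtain $N\geq n$, we choose a common length $d$, namely the least common multiple of the lengths of $b_1,\dots,b_n$, and take the cycles $c_i=b_i^{d/\ell(b_i)}$. These cycles are pairwise non-equivalent by Lemma \ref{equivalent-cycle}. Lemma \ref{number-of-band-and-bimodules} then gives $\mu_A(d)\geq n$, hence $N\geq n$, and therefore $A$ is exactly $n$-domestic. Conversely, suppose $A$ is representation-infinite domestic. Then $\mathrm{Ba}(A)\neq\varnothing$. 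To see that $\mathrm{Ba}(A)$ is finite, suppose instead that it is infinite. Then we pick $n>N$ distinct bands $b_1,\dots,b_n$ and, exactly as above, form cycles of a common length via powers. Lemma \ref{number-of-band-and-bimodules} yields $\mu_A(d)\geq n>N$, which contradicts domesticity.

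The main subtlety is the dimension bookkeeping. The quantity $\mu_A(d)$ refers to module dimension, whereas Lemma \ref{number-of-band-and-bimodules} is phrased using the length of the cycle. We need to check that the band module $M(b(c),m(c),\lambda)$ has dimension equal to the length of $c$, which holds since $M(b,m,\lambda)$ has dimension $m\cdot\ell(b)$. Given this, the two notions of $d$ agree and the powers construction is legitimate. The other point requiring care is the finiteness of strings of a fixed length, which is immediate.
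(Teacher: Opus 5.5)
Your proposal is correct and follows the paper's proof essentially step by step. The bimodules $M_{b_1},\dots,M_{b_n}$, together with the finiteness of string modules in each dimension, give $N$-domesticity with $N\leq n$. Lemma \ref{number-of-band-and-bimodules}, combined with the characterization of $N$ from \cite[Corollary 5.7]{C}, gives the matching lower bound, and the same lemma rules out infinitely many bands. Your explicit common length (the least common multiple of the band lengths, with Lemma \ref{equivalent-cycle} giving non-equivalence of the powered cycles) just spells out the ``Especially'' clause of Lemma \ref{number-of-band-and-bimodules}, which the paper leaves implicit.
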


\begin{proof}
''$\Leftarrow$'' Suppose that $A=\bk Q/I$ is a string algebra such that $\mathrm{Ba}(A)=\{b_1,b_2,\cdots,b_n\}$.
Then each band module over $A$ is isomorphic to a module of the form $M_{b_i}\otimes_{\bk[t]}V$, where $1\leq i\leq n$ and $V$ is a finite-dimensional indecomposable $\bk[t]$-module. Since for each positive integer $d$, the number of isomorphism classes of string modules over $A$ of dimension $d$ is finite, $A$ is $m$-domestic, where $m\leq n$. Since the cardinality of $\mathrm{Ba}(A)$ is $n$, by Lemma \ref{number-of-band-and-bimodules}, there exists some positive integer $d$ with $\mu_{A}(d)\geq n$. Since $m\geq\mu_{A}(e)$ for all positive integers $e$, we imply that $m=n$.

''$\Rightarrow$'' Since $A$ is representation-infinite, there exists a band in $\mathrm{Ba}(A)$. If there are infinitely many different elements in $\mathrm{Ba}(A)$, then according to Lemma \ref{number-of-band-and-bimodules} there exists no positive integer $n$ with $\mu_{A}(d)\leq n$ for all positive integers $d$. Therefore $A$ is nondomestic.
\end{proof}

\begin{Lem}\label{not-of-polynomial-growth}
If $c_1$, $c_2$ are two different cycles in $A$ at the same vertex of $Q$ of the same length $l$ such that $c_2 c_1$ is again a cycle, then $A$ is not of polynomial growth.
\end{Lem}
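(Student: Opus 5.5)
The plan is to exhibit, for every $d$, exponentially many pairwise non-equivalent cycles of length on the order of $d$, and then bound $\mu_A$ from below using Lemma \ref{number-of-band-and-bimodules}. Polynomial growth fails as soon as the number of non-equivalent cycles of some length $N$ grows faster than any polynomial in the dimension of the associated band modules. Since a band module $M(b(c),m(c),\lambda)$ has dimension equal to the length of $c$, Lemma \ref{number-of-band-and-bimodules} gives $\mu_A(N)\geq$ the number of pairwise non-equivalent cycles of length $N$. So it suffices to produce, for infinitely many $N$, at least $2^{cN}$ pairwise non-equivalent cycles of length $N$, for some fixed $c>0$.

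The cycles will be built as words in $c_1,c_2$. For a sequence $\varepsilon=(\varepsilon_1,\dots,\varepsilon_k)\in\{1,2\}^k$, put $w_\varepsilon=c_{\varepsilon_k}\cdots c_{\varepsilon_1}$, a closed word of length $kl$.

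\emph{Step 1: every $w_\varepsilon$ is a cycle.} By hypothesis $c_1$, $c_2$ and $c_2c_1$ are cycles, so every power $(c_2c_1)^n$ is a string, and in particular $c_1c_2$ occurs (inside $c_2c_1c_2c_1$) without violating $(b1)$ or $(b2)$. Condition $(b1)$ at a junction only depends on the last letter of one factor and the first letter of the next, and all four junction types $c_ic_j$ already occur in $c_1^2$, $c_2^2$, $c_2c_1$, $c_1c_2$. For $(b2)$, the relations of a string algebra are zero relations, namely paths (or their inverses) that are forbidden subwords. The key point is the special biserial condition: which letter may follow a given letter in a directed or inverse run is uniquely determined. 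So any directed subpath of $w_\varepsilon^n$ that crosses junctions is forced letter by letter. I would argue that any such subpath already occurs as a subpath of some power of $c_1$, $c_2$ or $c_2c_1$. If a directed run crosses a whole factor $c_i$, then $c_i$ is itself directed, and the uniqueness of continuations forces $c_1=c_2$ locally, or else the run lies inside $c_i^n$. I expect this to be the main obstacle: a careful case analysis using property $(2)$ of Definition \ref{special-biserial-algebra}. I would first try to reduce to the statement that $c_1$ and $c_2$ differ at their first or last letter, and therefore are not directed throughout.

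\emph{Step 2: count the equivalence classes.} The cycle $w_\varepsilon$ is equivalent to $w_{\varepsilon'}$ only via rotations and inverses. Since $c_1\neq c_2$ have the same length, a rotation by a multiple of $l$ corresponds to a cyclic shift of $\varepsilon$. Rotations not by multiples of $l$, and inversions, each identify a given $w_\varepsilon$ with at most $2kl$ others. Hence each class contains at most $2kl$ of the words $w_\varepsilon$, and the number of pairwise non-equivalent cycles of length $N=kl$ is at least $2^k/(2kl)$. Then $\mu_A(kl)\geq 2^k/(2kl)$, which exceeds $(kl)^m$ for large $k$, whatever $m$ is. Since $A$ is tame, this shows $A$ is not of polynomial growth.
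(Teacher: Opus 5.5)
Your construction and counting are essentially the paper's proof. The paper takes the $2^n$ words $c_{i(n)}\cdots c_{i(1)}$ and observes that a cycle of length $nl$ is equivalent to at most $2nl$ cycles. Lemma \ref{number-of-band-and-bimodules} then gives $\mu_A(nl)\geq 2^{n-1}/(nl)$, which is your Step 2. The $2^k$ words are pairwise distinct because $c_1\neq c_2$ have the same length; the remark about rotations by multiples of $l$ is not needed.

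The difference is Step 1. The paper simply asserts that these words are cycles. You try to justify this but leave it open, and the mechanism you suggest would not do the job. Uniqueness of continuations in a special biserial algebra is beside the point. The proposed reduction is also a non sequitur: knowing that $c_1$ and $c_2$ differ in their first or last letter says nothing about whether either of them is a directed word.

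The missing observation is admissibility of $I$. No cycle $c$ can consist only of arrows: then $c^n$ would be a path whose length grows with $n$, hence lies in $I$ for $n$ large, violating $(b2)$. The same argument applied to $c^{-n}$ excludes cycles made only of inverse arrows.

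Consequently a directed (or inverse) subword of $w_\varepsilon^n$ cannot contain a whole factor $c_i$. So it crosses at most one junction and is a subword of $c_jc_i$ for some $i,j\in\{1,2\}$. Each of $c_1c_1$, $c_2c_2$, $c_2c_1$, $c_1c_2$ is a subword of $c_1^2$, $c_2^2$, $c_2c_1$, $(c_2c_1)^2$ respectively, that is, of a power of one of the given cycles $c_1$, $c_2$, $c_2c_1$. Hence $(b2)$ for those three cycles gives $(b2)$ for $w_\varepsilon$. The same list settles $(b1)$ at every junction, including the cyclic one. With this, Step 1 closes in a few lines, with no case analysis and without using the special biserial condition.
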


\begin{proof}
Let $\mathscr{C}_n=\{c_{i(n)}\cdots c_{i(2)}c_{i(1)}\mid i(1),\cdots,i(n)\in\{1,2\}\}$ be a set of cycles in $A$ for each positive integer $n$. For each cycle $c\in\mathscr{C}_n$, since the length of $c$ is $nl$, there are at most $2nl$ cycles in $A$ which are equivalent to $c$. Therefore there are at least $\frac{2^n}{2nl}=\frac{2^{n-1}}{nl}$ cycles in $\mathscr{C}_n$ up to equivalence. According to Lemma \ref{number-of-band-and-bimodules}, $\mu_{A}(nl)\geq\frac{2^{n-1}}{nl}$ for all positive integers $n$, then $A$ is not of polynomial growth.
\end{proof}

\begin{proof}[{\bf Proof of Theorem \ref{Theorem}}]
The equivalence of $(i)$ and $(iii)$ is a consequence of Proposition \ref{number-of-band-and-representation-type} and the implication $(i)\Rightarrow (ii)$ is well-known.

''$(ii)\Rightarrow (iii)$'' Suppose that there are infinitely many different elements in $\mathrm{Ba}(\overline{A})$, where $\overline{A}$ is the associated string algebra of $A$. For each band $b=\alpha_r\cdots\alpha_2\alpha_1$ in $\overline{A}$, where $\alpha_i\in Q_1$ or $\alpha_{i}^{-1}\in Q_1$ for each $i\in\mathbb{Z}/r\mathbb{Z}=\{1,2,\cdots,r\}$, we denote by top$(b)$ the subset of $Q_0$ formed by vertices of the form $s(\alpha_i)$ for some $i\in\mathbb{Z}/r\mathbb{Z}=\{1,2,\cdots,r\}$ with $\alpha_i\in Q_1$ and $\alpha_{i-1}^{-1}\in Q_{1}$ (note that equivalent bands have the same tops). Since there are infinitely many different elements in $\mathrm{Ba}(\overline{A})$, there exist two different bands $b_1$ and $b_2$ in $\mathrm{Ba}(\overline{A})$ with $\mathrm{top}(b_1)\cap\mathrm{top}(b_2)\neq\varnothing$. Fix $x\in$top$(b_1)\cap$top$(b_2)$, and after rotations we may assume that $b_1=\alpha_r\cdots\alpha_2\alpha_1$ and $b_2=\beta_l\cdots\beta_2\beta_1$ are bands starting and ending at $x$, where $\alpha_1,\beta_1\in Q_1$ and $\alpha_{r}^{-1},\beta_{l}^{-1}\in Q_{1}$.

Since $b_1$ and $b_2$ are bands, we have $\alpha_r\neq\alpha_{1}^{-1}$ and $\beta_l\neq\beta_{1}^{-1}$. If $\alpha_1\neq\beta_{l}^{-1}$ and $\alpha_r\neq\beta_{1}^{-1}$, then $b_1 b_2$ is a cycle in $\overline{A}$. If $\alpha_1=\beta_{l}^{-1}$, then $\alpha_r\neq \beta_l$ and $\alpha_1\neq\beta_1$, so $b_{1}^{-1}b_2=\alpha_{1}^{-1}\cdots\alpha_{r}^{-1}\beta_l\cdots\beta_1$ is a cycle in $\overline{A}$. If $\alpha_r=\beta_{1}^{-1}$, then we also have $\alpha_r\neq \beta_l$ and $\alpha_1\neq\beta_1$, so $b_{1}^{-1}b_2=\alpha_{1}^{-1}\cdots\alpha_{r}^{-1}\beta_l\cdots\beta_1$ is a cycle in $\overline{A}$. Therefore we may assume that $b_1 b_2$ is a cycle in $\overline{A}$ by replacing $b_1$ with $b_{1}^{-1}$ if necessary.

Let $c_1=b_{1}^{l}$ and $c_2=b_{2}^{r}$. Then $c_1$, $c_2$ are two different cycles in $\overline{A}$ at the same vertex $x$ of $Q$ of the same length such that $c_2 c_1$ is again a cycle. According to Lemma \ref{not-of-polynomial-growth}, $\overline{A}$ is not of polynomial growth, a contradiction.
\end{proof}

Finally, we mention that the results in this appendix simplify some arguments in \cite{GLY}.

\section{The associated push-down functor of a Galois covering functor between two locally bounded categories preserves projective-injectives}\label{appendix-1}

Throughout this appendix $\bk$ will be an algebraically closed field. We will show a general result (see Propositions \ref{prop-covering-on-graded-categories}, \ref{prop-induced-covering}) that a Galois covering between two finite-dimensional algebras induces Galois coverings between the associated graded algebras and the associated quotient algebras by the ideals of the sum of socles of some non-uniserial indecomposable projective-injective modules. This result will be used in Section \ref{sec-rep-type-graded-bFBGA}.

We first recall (Galois) coverings of quivers with relations. For more details on covering theory, we refer to \cite{MP,E,LL2,LLXC}.

\begin{Def}[{see \cite[Section I.10.1]{E}}] Let $Q$ and  $Q'$ be quivers,  and  $f:Q\longrightarrow Q'$ a quiver morphism. $f$ is called a covering of quivers if for each vertex $x$ in $Q$, $f$ induces a bijection between the set of arrows of $Q$ starting (resp. ending) at $x$ and the set of arrows of $Q'$ starting (resp. ending) at $f(x)$. If $f$ is a surjective covering of quivers and there exists a group $G$ of automorphisms of $Q$ which acts freely on vertices of $Q$ (that is, $g(x)=x$ implies $g=1$ for each $g\in G$ and each $x\in Q_0$) such that $fg=f$ for each $g\in G$ and $G$ acts transitively on each fiber $f^{-1}(y)$ of $f$ with $y\in Q'_0$, then $f$ is said to be a Galois covering of quivers (with group $G$).
\end{Def}

\begin{Rem}
Let $f:Q\longrightarrow Q'$ be a covering of quivers.
\begin{enumerate}[$(1)$]
		\item For each arrow $\alpha'$ starting (resp. ending) at $f(x)$ for any vertex $x$ in $Q$, there exists a unique arrow $\alpha$ starting (resp. ending) at $x$ such that $f(\alpha)=\alpha'$. Moreover, we can add the definition of $f$ on paths in $Q$ as follows $f(\alpha_n\cdots\alpha_2\alpha_1):=f(\alpha_n)\cdots f(\alpha_2)f(\alpha_1)$, where $\alpha_i$ is an arrow in $Q$ for each $i=1,2,\cdots,n$. Therefore, $f$ has the unique lifting of paths property, that is, for any path $\rho'$ starting (resp. ending) at $f(x)$ in $\bk Q'$,  there exists a unique path $\rho$ starting (resp. ending) at $x$ such that $f(\rho)=\rho'$.
		\item If $Q'$ is a connected quiver and $b$ is a vertex in $Q'$, then there exists some vertex $x$ in $Q$ such that $f(x)=b$.
	\end{enumerate}

\end{Rem}

Recall from \cite[Section 3.1]{BG} that a $k$-linear functor $F: M\rightarrow N$ between two $k$-categories is said to be a {\it covering functor} if for every two objects $a,b$ of $N$, $F$ induces isomorphisms $\bigoplus_{z/a}M(z,y)\rightarrow N(a,b)$ and $\bigoplus_{z/b}M(x,z)\rightarrow N(a,b)$, where $x,y$ are objects of $M$ with $a=F(x)$, $b=F(y)$ and $M(x,z)$ (resp. $N(a,b)$) is the morphism set from $x$ (resp. $a$) to $z$ (resp. $b$) in $M$ (resp. $N$). In particular, a covering functor is a faithful functor.

For a quiver morphism $f:Q\rightarrow Q'$, denote by $\bk f:\bk Q\rightarrow\bk Q'$ the $\bk$-linear functor between the associated path categories defined by the $\bk$-linear extension of $f$. It is well-known that if $f:Q\rightarrow Q'$ is a covering, then $\bk f:\bk Q\rightarrow\bk Q'$ is a covering functor.

\begin{Def}[{see \cite[Section 1]{MP}}]
Let $Q$ and $Q'$ be quivers and $I$ and $I'$ be ideals in path categories $\bk Q$ and $\bk Q'$ respectively. A Galois covering of quivers with relations $f:(Q,I)\rightarrow (Q',I')$ with group $G$ is a Galois covering of quivers $f:Q\rightarrow Q'$ with group $G$ which preserves relations such that $(\bk g)(I)=I$ for each $g\in G$ and such that $I'$ is the ideal generated by $(\bk f)(\rho)$ with $\rho\in I$.
\end{Def}

Galois coverings of quivers with relations are closely related to Galois covering functors between locally bounded categories. Recall from \cite[Definition 2.1]{BG} that a {\it locally bounded category} is a $k$-category $M$ satisfying the following conditions:
\begin{enumerate}[$(1)$]
  \item For each $x\in M$, the endomorphism algebra $M(x,x)$ is local.
  \item Distinct objects of $M$ are not isomorphic.
  \item For each $x\in M$, $\sum_{y\in M}\mathrm{dim}_{k}M(x,y)< \infty$ and $\sum_{y\in \Lambda}\mathrm{dim}_{k}M(y,x)< \infty$.
\end{enumerate}
A covering functor $F: M\rightarrow N$ between two locally bounded categories is called a {\it Galois covering} (see \cite[Section 3.1]{Ga1981}) if $F$ is surjective on objects and there exists a group $G$ of $k$-linear automorphisms of $M$ which acts freely on (objects of) $M$, such that $Fg=F$ for any $g\in G$ and $G$ acts transitively on $F^{-1}(a)$ for each object $a$ of $N$. It is well-known that a Galois covering of locally finite quivers with admissible relations $f:(Q,I)\rightarrow (Q',I')$ induces a Galois covering functor $F:\bk Q/I\rightarrow\bk Q'/I'$ between locally bounded categories, and conversely every Galois covering functor between two locally bounded categories is induced by a Galois covering of locally finite quivers with admissible relations (see the proof of \cite[Lemma 4.2]{Sk}).

Recall from \cite[Section 3.2]{BG} that if $M$ and $N$ are two locally bounded categories and $F: M\rightarrow N$ is a covering functor, then $F$ induces a push-down functor $F_{\lambda}:\mathrm{mod}M\rightarrow\mathrm{mod}N$ between the associated categories of finitely generated modules. For a locally bounded category $M$, denote by $\gr(M)$ the graded category associated with the radical filtration of $M$ (see \cite[Section 5.1]{BG}).

Let $M$ and $N$ be locally bounded categories given by quivers with admissible ideals $(Q,I)$ and $(Q',I')$ respectively. Let $f:(Q,I)\rightarrow (Q',I')$ be a Galois covering of quivers with relations with group $G$. Denote by $F:M\rightarrow N$ the Galois covering functor induced by $f$, and let $F_{\lambda}:\mathrm{mod}M\rightarrow\mathrm{mod}N$ be the associated push-down functor. Since $F_{\lambda}$ preserves the radical of finitely generated modules (see \cite[Proposition 3.2]{BG}), there exists an isomorphism
$$F_{\lambda}(\mathrm{rad}^{n}M(-,x))\xrightarrow{\sim}\mathrm{rad}^{n}N(-,Fx)$$
for each $x\in M$ and each positive integer $n$. Then $F$ induces a bijection
$$\bigoplus_{Fy=a}\mathrm{rad}^{n}M(y,x)\xrightarrow{\sim}\mathrm{rad}^{n}N(a,Fx)$$
for each $x\in M$, $a\in N$ and each positive integer $n$. By using the covering functor $F:M^{\mathrm{op}}\rightarrow N^{\mathrm{op}}$, we have that $F$ also induces a bijection
$$\bigoplus_{Fy=a}\mathrm{rad}^{n}M(x,y)\xrightarrow{\sim}\mathrm{rad}^{n}N(Fx,a)$$
for each $x\in M$, $a\in N$ and each positive integer $n$. Then $F$ induces a covering functor $\gr(F):\gr(M)\rightarrow\gr(N)$, which is a Galois covering with group $G$. Denote by $H:\bk Q\rightarrow\gr(M)$ and $H':\bk Q'\rightarrow\gr(N)$ the natural projections, where the functor $H$ is given by $H(x)=x$ for each $x\in Q_0$ and $$H(\alpha)=\overline{\alpha}+\mathrm{rad}^{2}M(x,y)\in\mathrm{rad}M(x,y)/\mathrm{rad}^{2}M(x,y)$$
for each arrow $\alpha:x\rightarrow y$ in $Q$, where $\overline{\alpha}$ denotes the residue class of $\alpha$ in $M=\bk Q/I$, and $H'$ is defined in the same way. Denote by $L$ and $L'$ the kernels of $H$ and $H'$ respectively. Then $\gr(M)$ and $\gr(N)$ are given by the quivers with relations $(Q,L)$ and $(Q',L')$ respectively.

\begin{Prop}\label{prop-covering-on-graded-categories}
Following above notations, we have that the covering functor $\bk f:\bk Q\rightarrow \bk Q'$ maps each relation in $L$ to a relation in $L'$, and $f$ induces a Galois covering $f:(Q,L)\rightarrow (Q',L')$ with group $G$. Denote by $\phi:\bk Q/L\rightarrow\bk Q'/L'$ the Galois covering functor induced by $f$. Then there exists a commutative diagram
$$\xymatrix{
\bk Q/L\ar[r]^{\phi}\ar[d]_{h}^{\cong}&	\bk Q'/L'\ar[d]_{h'}^{\cong}	\\
\gr(M)\ar[r]^{\gr(F)}	& \gr(N),} $$
where $h$ and $h'$ are isomorphism functors induced by $H$ and $H'$ respectively.
\end{Prop}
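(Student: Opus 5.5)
The plan is to prove first that $\bk f$ maps $L$ into $L'$. For this I will compare the two compositions $\bk Q\xrightarrow{\bk f}\bk Q'\xrightarrow{H'}\gr(N)$ and $\bk Q\xrightarrow{H}\gr(M)\xrightarrow{\gr(F)}\gr(N)$ and show they agree. Both functors are $\bk$-linear. The first sends a vertex $x$ to $f(x)=Fx$. It sends an arrow $\alpha:x\to y$ to $\overline{f(\alpha)}+\rad^2N(Fx,Fy)$. The second sends $\alpha$ to $\gr(F)(\overline{\alpha}+\rad^2M(x,y))=F(\overline{\alpha})+\rad^2N(Fx,Fy)$. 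Since $F$ is induced by $f$, we have $F(\overline{\alpha})=\overline{f(\alpha)}$. So the two functors agree on generators, hence they agree on the whole path category. It follows that $\bk f(L)=\bk f(\ker H)\subseteq\ker H'=L'$, and that $h'\circ\phi=\gr(F)\circ h$ once $\phi$ is known to be well defined. This gives the commutative square.

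Next I will check the Galois covering axioms for $f:(Q,L)\to(Q',L')$. The underlying quiver map is unchanged, so it is still a Galois covering of quivers with group $G$. For $g\in G$, the automorphism $g$ of $M$ preserves the radical filtration, so it induces an automorphism $\gr(g)$ of $\gr(M)$ with $H\circ \bk g=\gr(g)\circ H$. Hence $\bk g(L)=L$.

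The remaining point, which I expect to be the main obstacle, is that $L'$ is generated by $\bk f(L)$, and not merely that it contains it. I would argue in the following order.
\begin{enumerate}[$(a)$]
\item Let $J\subseteq L'$ be the ideal generated by $\bk f(L)$.
\item Since $f$ is a Galois covering of quivers with a $G$-stable ideal $L$, the standard theory (e.g. \cite{MP}, \cite{Ga1981}) says that $\bk f$ induces a Galois covering functor $\bk Q/L\to\bk Q'/J$ with group $G$. This uses that $\bk f$ is surjective on morphism spaces via the direct sums over fibres.
\item The composite $\bk Q/L\to\bk Q'/J\to\bk Q'/L'\cong\gr(N)$ equals $\gr(F)\circ h$, which is a covering functor. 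In particular it induces isomorphisms $\bigoplus_{Fz=b}(\bk Q/L)(x,z)\cong\gr(N)(Fx,b)$.
\item The covering $\bk Q/L\to\bk Q'/J$ also induces isomorphisms $\bigoplus_{Fz=b}(\bk Q/L)(x,z)\cong(\bk Q'/J)(Fx,b)$.
\item Combining $(c)$ and $(d)$, the projection $\bk Q'/J\to\bk Q'/L'$ is bijective on every morphism space. Therefore $J=L'$.
\end{enumerate}

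Finally, since $L$ and $L'$ are admissible, we need the induced functor $\phi$ to be the Galois covering functor and $h,h'$ to be isomorphisms. The maps $h$ and $h'$ are isomorphisms because $H$ and $H'$ are full: $\gr(M)$ is generated in degree one by the classes of the arrows. They are faithful modulo $L$ and $L'$ by construction. This completes the plan.
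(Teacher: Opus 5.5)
Your proposal is correct. The first step, comparing $H'\circ\bk f$ with $\gr(F)\circ H$ on vertices and arrows to get $(\bk f)(L)\subseteq L'$ and the commutative square, is exactly the paper's. You diverge from the paper in the other two points.

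\textbf{$G$-stability of $L$.} You use the induced automorphism $\gr(g)$ of $\gr(M)$ and the identity $H\circ\bk g=\gr(g)\circ H$. The paper instead uses $H'\circ\bk f\circ\bk g^{-1}=H'\circ\bk f$ together with faithfulness of $\gr(F)$ to obtain $\ker(H\circ\bk g^{-1})=\ker H$. Both work; yours is more intrinsic, since it never leaves $M$.

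\textbf{Generation of $L'$ by $(\bk f)(L)$.} You pass through the intermediate quotient $\bk Q'/J$. You invoke the standard fact that a $G$-stable ideal $L$ gives a covering $\bk Q/L\to\bk Q'/J$, which is why you must establish $G$-stability first. You then compare the two fibre-sum isomorphisms to force $J=L'$. The paper does this by a direct element chase. Given $\rho'\in L'(a,b)$ and $y$ over $b$, it writes $\rho'=\sum_{f(x)=a}(\bk f)(\rho_x)$ with $\rho_x\in\bk Q(x,y)$, using only that $\bk f$ is a covering functor of path categories. Applying $H'$ and the bijection $\bigoplus_{f(x)=a}\gr(M)(x,y)\to\gr(N)(a,b)$ gives $H(\rho_x)=0$, that is, $\rho_x\in L$.

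\textbf{Comparison.} The key input is the same in both routes: the fibre-sum bijection of $\gr(F)$. The paper's version is self-contained and needs neither the orbit-category fact nor $G$-stability for this part. It also directly yields the sharper equality $L'(a,b)=(\bk f)\bigl(\bigoplus_{f(x)=a}L(x,y)\bigr)$, with no need to pass to a generated ideal. Your route is slightly longer but conceptually clean. The fact you cite in $(b)$ is indeed standard, and it can be checked directly from unique lifting of paths plus $G$-stability of $L$.
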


\begin{proof}
It is straightforward to verify that the diagram
\begin{equation*}\label{diagram}
\xymatrix{
\bk Q\ar[r]^{\bk f}\ar[d]_{H}&	\bk Q'\ar[d]_{H'}	\\
\gr(M)\ar[r]^{\gr(F)}	& \gr(N)}
\end{equation*}
commutes. Then $\bk f$ sends relations in $L$ to $L'$. Since $\gr(F)$ is a covering functor, it is fully faithful. For each $g\in G$, by the commutative diagram above we have that
$$\gr(F)H=H'(\bk f)=H'(\bk f)(\bk g^{-1})=\gr(F)H(\bk g^{-1}).$$
Then
$$L=\mathrm{ker}(H)=\mathrm{ker}(\gr(F)H)=\mathrm{ker}(\gr(F)H(\bk g^{-1}))=\mathrm{ker}(H(\bk g^{-1}))=(\bk g)(L).$$
To show that $f$ induces a Galois covering $f:(Q,L)\rightarrow (Q',L')$ with group $G$, it suffices to show that $L'$ is generated by $(\bk f)(\rho)$ with $\rho\in L$. For each $\rho'\in L'(a,b)$ and each $y\in Q_0$ with $f(y)=b$, there exists a unique element
$$(\rho_x)\in\bigoplus_{f(x)=a}\bk Q(x,y)$$
such that
$$\sum_{f(x)=a}(\bk f)(\rho_x)=\rho'.$$
Since
$$\sum_{f(x)=a}\gr(F)(H(\rho_x))=\sum_{f(x)=a}H'((\bk f)(\rho_x))=H'(\sum_{f(x)=a}(\bk f)(\rho_x))=H'(\rho')=0$$
and since $\gr(F)$ induces a bijection
$$\bigoplus_{f(x)=a}\gr(M)(x,y)\xrightarrow{\sim}\gr(N)(a,b),$$
we have that $H(\rho_x)=0$ for each $x\in Q_0$ with $f(x)=a$. Then $\rho_x\in L(x,y)$ for each $x\in Q_0$ with $f(x)=a$, and therefore $\rho'$ is a sum of elements of the form $(\bk f)(\rho)$ with $\rho\in L$.

The commutativity of the diagram
$$\xymatrix{
\bk Q/L\ar[r]^{\phi}\ar[d]_{h}^{\cong}&	\bk Q'/L'\ar[d]_{h'}^{\cong}	\\
\gr(M)\ar[r]^{\gr(F)}	& \gr(N),} $$
follows directly from the commutativity of the diagram
$$\xymatrix{
\bk Q\ar[r]^{\bk f}\ar[d]_{H}&	\bk Q'\ar[d]_{H'}	\\
\gr(M)\ar[r]^{\gr(F)}	& \gr(N).} $$
\end{proof}

\begin{Prop}\label{prop-preserve-proj-inj}
Let $M$, $N$ be locally bounded categories. Let $F:M\rightarrow N$ be a covering functor and $F_{\lambda}:\mathrm{mod}M\rightarrow\mathrm{mod}N$ be the associated push-down functor. Then for each object $x$ in $M$, the module $M(-,x)$ is injective if the module $N(-,Fx)\cong F_{\lambda}M(-,x)$ is injective. Moreover, if $F$ is a Galois covering, then $M(-,x)$ is injective if and only if $N(-,Fx)\cong F_{\lambda}M(-,x)$ is injective.
\end{Prop}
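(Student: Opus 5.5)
We prove the statement by comparing socles, using the adjunction between push-down and pull-up together with the explicit description of $F_\lambda$ on representables. Recall that $F_\lambda M(-,x)\cong N(-,Fx)$, and that for a covering functor the pull-up $F_\bullet=(-)\circ F:\mathrm{Mod}N\to\mathrm{Mod}M$ is exact and right adjoint to $F_\lambda$. Moreover, $F_\bullet$ sends injectives to injectives, since its left adjoint $F_\lambda$ is exact. Finally, $F_\bullet F_\lambda X\cong\bigoplus_{g}{}^{g}X$ when $F$ is Galois with group $G$. In the general covering case one still has $F_\bullet F_\lambda X(z)=\bigoplus_{Fz'=Fz}X(z')$ by the covering isomorphisms, so $X$ is a direct summand of $F_\bullet F_\lambda X$ via the unit. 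I would first verify this splitting: the unit $X\to F_\bullet F_\lambda X$ is the inclusion of the component $z'=z$, and a retraction is given by projection onto that component, which is $M$-linear by the covering bijections.

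For the first assertion, suppose $N(-,Fx)$ is injective in $\mathrm{mod}N$. Since $N$ is locally bounded, finitely generated injectives are injective in $\mathrm{Mod}N$, so $F_\bullet N(-,Fx)$ is injective in $\mathrm{Mod}M$. It contains $M(-,x)$ as a direct summand by the splitting above, hence $M(-,x)$ is injective in $\mathrm{Mod}M$. Being finite-dimensional, it is then injective in $\mathrm{mod}M$. The main technical point is to justify that the splitting is a module splitting and that injectivity transfers between $\mathrm{mod}$ and $\mathrm{Mod}$ over locally bounded categories; I would cite Bongartz--Gabriel for both facts.

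For the converse, assume $F$ is Galois and $M(-,x)$ is injective. Indecomposable finitely generated injectives of $M$ are of the form $D M(y,-)$, and local boundedness gives $M(-,x)\cong DM(y,-)$ for the unique $y$ with $\soc M(-,x)\cong S_y$. By a standard result for Galois coverings (Gabriel), $F_\lambda$ commutes with the duality in the sense $F_\lambda(DM(y,-))\cong DN(Fy,-)$; this can also be checked directly from the covering isomorphisms $\bigoplus_{Fz=a}M(y,z)\cong N(Fy,a)$, which use the Galois hypothesis that all fibres are $G$-orbits so the sum runs over all lifts. Hence $N(-,Fx)\cong F_\lambda M(-,x)\cong DN(Fy,-)$, which is injective.

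I expect the main obstacle to be this duality compatibility in the Galois case: one must check that the identification $F_\lambda D M(y,-)\cong DN(Fy,-)$ is natural in $N$. I would do this by writing $F_\lambda X(a)=\bigoplus_{Fz=a}X(z)$ and comparing the action maps with the dual of the second covering isomorphism.
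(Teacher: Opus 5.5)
\textbf{Verdict.} Your strategy is sound and differs from the paper's, but the justification of the splitting in the first assertion fails. That assertion is stated for arbitrary covering functors, not only Galois ones.

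\textbf{Where the splitting fails.} In $F_\lambda X(a)=\bigoplus_{Fz'=a}X(z')$, the $N$-action is computed by decomposing a morphism through the covering isomorphism with \emph{fixed target} $z'$. So the projection $\pi_z$ onto the $z$-component of $F_\bullet F_\lambda X(z)$ is $M$-linear only under the following condition. For every $h\in M(w,z)$ and every $z'\neq z$ with $Fz'=Fz$, the decomposition $F(h)=\sum_{Fw'=Fw}F(u_{w'})$ with $u_{w'}\in M(w',z')$ must have $u_w=0$. This holds if $F$ is Galois (then $F(h)=F(gh)$ with $gw\neq w$), or if $F$ is induced by a covering of quivers. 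It fails for general covering functors.

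Here is a counterexample. Let $N$ have one object $a$ with $N(a,a)=\bk 1_a\oplus\bk x\oplus\bk y$, radical square zero. Let $M$ have objects $z_1,z_2,z_3$, radical square zero, with radical spanned by $r_{ij}\colon z_i\to z_j$ for $(i,j)\in\{(1,1),(1,2),(2,1),(2,3),(3,2),(3,3)\}$. Set
$F(r_{11})=F(r_{23})=F(r_{32})=x$, $F(r_{12})=F(r_{33})=y$ and $F(r_{21})=x+y$.
Both covering bijections hold at every object, so $F$ is a covering functor. Take $X=M(-,z_3)$. Then $\pi_{z_1}=0$ because $X(z_1)=0$. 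However, $1_a\cdot F(r_{21})=x+y=F(r_{23})+F(r_{33})$ has $z_2$-component $r_{23}\neq 0$. Hence $\pi$ is not a module map.

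\textbf{Repair.} You only need the splitting for $X=M(-,x)$. There, use the \emph{other} covering isomorphism, the one with fixed source. The map $(v_y)_y\mapsto\sum_y F(v_y)$ is an $M$-module isomorphism $\bigoplus_{Fy=Fx}M(-,y)\cong N(F-,Fx)=F_\bullet N(-,Fx)$; it is natural because it commutes with precomposition. The unit $u\mapsto F(u)$ is the inclusion of the summand $y=x$. With this change your first part is complete; the transfers of injectivity between $\mathrm{mod}$ and $\mathrm{Mod}$ are fine.

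\textbf{Galois direction.} The route via $F_\lambda DM(y,-)\cong DN(Fy,-)$ is correct, but your stated reason for needing the Galois hypothesis is off. ``The sum runs over all lifts'' holds for any covering. What is really needed is this: for $\phi\in N(a',a)$, the $M(z'',z)$-component of $\phi$ computed with fixed target $z$ equals the one computed with fixed source $z''$. This follows from $F(gh)=F(h)$ and freeness of the action. The example above shows it fails in general: for $\phi=x+y$ at $z_1$ the two components are $0$ and $r_{11}$.

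\textbf{Comparison with the paper.} The paper avoids adjunctions. In the first direction it uses only that $F_\lambda$ is exact and sends simples to simples, so $\soc M(-,x)\cong\bk_y$ is simple and $M(-,x)\hookrightarrow DM(y,-)$. It then closes with the count $\dim M(-,x)=\dim N(-,Fx)=\dim DN(Fy,-)=\dim M(y,-)$. In the Galois direction it writes a socle morphism as $f=\sum F(u_z)$, shows each $u_z$ lands in $\soc M(-,x)$, and reuses the same dimension count. That argument works verbatim for arbitrary covering functors with no compatibility issue, at the cost of being hands-on. Your corrected version is more conceptual: injectivity is transported along the adjunction, and the converse reduces to the standard commutation $F_\lambda D\cong DF_\lambda$ for Galois coverings.
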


\begin{proof}
Suppose that $N(-,Fx)\cong F_{\lambda}M(-,x)$ is injective. Since $F_{\lambda}$ is an exact functor mapping simples to simples, $F_{\lambda}(\mathrm{soc}(M(-,x)))$ is contained in $\mathrm{soc}(F_{\lambda}M(-,x))$. Since $F_{\lambda}M(-,x)$ is indecomposable and injective, $\mathrm{soc}(F_{\lambda}M(-,x))$ is simple. Then $F_{\lambda}(\mathrm{soc}(M(-,x)))$ is simple, which has dimensionality $1$. Since $F_{\lambda}$ preserves the dimensions of modules, $\mathrm{soc}(M(-,x))$ has dimensionality $1$, which is simple. Suppose that $\mathrm{soc}(M(-,x))\cong \bk_{y}$, where $\bk_y$ denotes the simple $M$-module corresponding to the object $y$ in $M$. Then $M(-,x)$ is a submodule of the injective hull $DM(y,-)$ of $\bk_y$. Since $\bk_y$ is a submodule of $M(-,x)$, $\bk_{Fy}=F_{\lambda}(\bk_y)$ is a submodule of $F_{\lambda}M(-,x)$. Since $F_{\lambda}M(-,x)$ is indecomposable injective, $\mathrm{soc}(F_{\lambda}M(-,x))\cong\bk_{Fy}$, and $F_{\lambda}M(-,x)\cong DN(Fy,-)$. Since
\begin{align*}
\mathrm{dim}_{\bk}(M(-,x))&=\mathrm{dim}_{\bk}(F_{\lambda}M(-,x))=\mathrm{dim}_{\bk}(DN(Fy,-))\\
&=\mathrm{dim}_{\bk}(N(Fy,-))=\mathrm{dim}_{\bk}(M(y,-))=\mathrm{dim}_{\bk}(DM(y,-)),
\end{align*}
$M(-,x)$ is isomorphic to $DM(y,-)$, which is injective.

Now suppose that $F$ is a Galois covering with group $G$ and $M(-,x)$ is injective. Then $M(-,x)$ is isomorphic to $DM(y,-)$ for some object $y$ in $M$. Let $S$ be a simple submodule of $N(-,Fx)$. Suppose that $S\cong \bk_a$ for some $a\in N$. Let $f\in N(a,Fx)$ be the morphism such that $N(-,f):N(-,a)\rightarrow N(-,Fx)$ is equal to the composition of the projective cover $p:N(-,a)\rightarrow S$ and the inclusion map $i:S\rightarrow N(-,Fx)$. Since $F$ is a covering functor, $F$ induces a bijection
$$\bigoplus_{Fz=a}M(z,x)\rightarrow N(a,Fx).$$
Suppose that $\sum_{Fz=a}F(u_z)=f$ with $u_z\in M(z,x)$ for each $z$. Fix an object $z_0$ of $M$ with $Fz_0=a$. Then for each object $t\in M$ and each morphism $h\in\mathrm{rad}M(t,z_0)$, since
$$fF(h)=\sum_{g\in G}F(u_{g(z_0)})F(g(h))=\sum_{g\in G}F(u_{g(z_0)}g(h)),$$
the bijection
$$\bigoplus_{g\in G}M(g(t),x)\rightarrow N(Ft,Fx)$$
maps $(u_{g(z_0)} g(h))_{g\in G}$ to $fF(h)$.
Since $h\in\mathrm{rad}M(t,z_0)$, we have $F(h)\in\mathrm{rad}N(Ft,a)$. Since $N(-,f)$ factors through a simple module, $\mathrm{rad}N(-,a)\subseteq\mathrm{ker}(N(-,f))$. Since $F(h)\in\mathrm{rad}N(Ft,a)$, $N(-,fF(h))=N(-,f)N(-,F(h))=0$ and $fF(h)=0$. Then $u_{g(z_0)} g(h)=0$ for each $g\in G$, in particular, $u_{z_0} h=0$. Since the composition of $M(-,h):M(-,t)\rightarrow M(-,z_0)$ and $M(-,u_{z_0}):M(-,z_0)\rightarrow M(-,x)$ is equal to $0$ for each object $t\in M$ and each morphism $h\in\mathrm{rad}M(t,z_0)$, $M(-,u_{z_0})$ factors through the top of $M(-,z_0)$. Then the image of $M(-,u_{z_0})$ belongs to $\mathrm{soc}(M(-,x))$. Since $z_0$ is arbitrary, the image of
$$(M(-,u_z))_{Fz=a}:\bigoplus_{Fz=a}M(-,z)\rightarrow M(-,x)$$
belongs to $\mathrm{soc}(M(-,x))$. By applying the push-down functor $F_{\lambda}$, we have the following commutative diagram
$$\xymatrix{
N(-,a)\ar[r]^(0.4){\iota} & \oplus_{Fz=a}N(-,a)\ar[rr]^{(N(-,F(u_z)))_{Fz=a}}\ar[d]^{\cong} & &	N(-,Fx)\ar[d]^{\cong} \\
& F_{\lambda}(\oplus_{Fz=a}M(-,z))\ar[rr]^{F_{\lambda}((M(-,u_z))_{Fz=a})}\ar[dr] &	& F_{\lambda}M(-,x) \\
&&F_{\lambda}(\mathrm{soc}M(-,x))\ar[ur]& ,} $$
where the component of $\iota$ corresponding to $z$ is equal to $1_{N(-,a)}$ if $u_z\neq 0$, and is equal to $0$ if $u_z=0$. Since the composition of the two maps in the first row of above diagram is $N(-,f)$, we have that $N(-,f)$ factors through the inclusion map $F_{\lambda}(\mathrm{soc}(M(-,x)))\hookrightarrow F_{\lambda}M(-,x)\cong N(-,Fx)$. Since the image of $N(-,f)$ is $S$, $S$ is contained in $F_{\lambda}(\mathrm{soc}(M(-,x)))$. Then each simple submodule of $N(-,Fx)$ is contained in $F_{\lambda}(\mathrm{soc}(M(-,x)))$, which implies that $\mathrm{soc}(N(-,Fx))=F_{\lambda}(\mathrm{soc}(M(-,x)))$. Since $M(-,x)$ is indecomposable injective, $\mathrm{soc}(M(-,x))$ is simple and therefore $\mathrm{soc}(N(-,Fx))=F_{\lambda}(\mathrm{soc}(M(-,x)))$ is simple. Since $\bk_y$ is a submodule of $M(-,x)\cong DM(y,-)$, $\bk_{Fy}=F_{\lambda}(\bk_y)$ is a submodule of $N(-,Fx)$. Since $\mathrm{soc}(N(-,Fx))$ is simple, it is isomorphic to $\bk_{Fy}$. Then $N(-,Fx)$ is isomorphic to a submodule of $DN(Fy,-)$. Since
\begin{align*}
\mathrm{dim}_{\bk}(N(-,Fx))&=\mathrm{dim}_{\bk}(M(-,x))=\mathrm{dim}_{\bk}(DM(y,-)) \\
&=\mathrm{dim}_{\bk}(M(y,-))=\mathrm{dim}_{\bk}(N(Fy,-))=\mathrm{dim}_{\bk}(DN(Fy,-)),	
\end{align*}
$N(-,Fx)$ is isomorphic to $DN(Fy,-)$, which is injective.
\end{proof}

Combining Proposition \ref{prop-preserve-proj-inj} and \cite[Proposition 3.2]{BG}, we have

\begin{Cor}\label{cor-projective-injective}
Let $M$, $N$ be locally bounded categories. Let $F:M\rightarrow N$ be a Galois covering functor and $F_{\lambda}:\mathrm{mod}M\rightarrow\mathrm{mod}N$ be the associated push-down functor. Then for each $m\in\mathrm{mod}M$, $m$ is projective-injective if and only if $F_{\lambda}m$ is projective-injective.
\end{Cor}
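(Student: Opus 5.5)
The plan is to reduce to indecomposable summands and then apply Proposition \ref{prop-preserve-proj-inj}. The facts about $F_{\lambda}$ I will use, from \cite[Proposition 3.2]{BG}, are: $F_{\lambda}$ is exact; it preserves dimensions; it sends $M(-,x)$ to $N(-,Fx)$; and it preserves radicals, i.e. $F_{\lambda}(\mathrm{rad}\, m)\cong \mathrm{rad}(F_{\lambda}m)$. Throughout, $m\in\mathrm{mod}M$ is finitely generated, so $m$ has a finite decomposition into indecomposables and a projective cover in $\mathrm{mod}M$.

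First I treat the direction ``$\Rightarrow$''. Suppose $m$ is projective-injective. Since $m$ is projective, it decomposes as $m\cong\bigoplus_{i=1}^{s}M(-,x_i)$. Each summand $M(-,x_i)$ is a direct summand of the injective module $m$, so it is injective. Because $F$ is a Galois covering, the ``moreover'' part of Proposition \ref{prop-preserve-proj-inj} shows that $N(-,Fx_i)\cong F_{\lambda}M(-,x_i)$ is injective for every $i$. Since $F_{\lambda}$ is additive, $F_{\lambda}m\cong\bigoplus_i N(-,Fx_i)$. A finite direct sum of projective-injective modules is projective-injective, so $F_{\lambda}m$ is projective-injective.

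For the direction ``$\Leftarrow$'', suppose $F_{\lambda}m$ is projective-injective. The main step, and the one I expect to be the real obstacle, is showing that $F_{\lambda}$ reflects projectivity. I would argue with projective covers as follows.

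\begin{enumerate}[$(1)$]
\item Let $\pi:P\to m$ be a projective cover in $\mathrm{mod}M$, so that $\pi$ induces an isomorphism $P/\mathrm{rad}P\cong m/\mathrm{rad}\, m$.
\item Since $F_{\lambda}$ is exact and preserves radicals, $F_{\lambda}\pi$ is surjective and induces an isomorphism on tops. Also $F_{\lambda}P$ is projective, being a finite sum of modules $N(-,Fx)$. Hence $F_{\lambda}\pi$ is a projective cover of $F_{\lambda}m$.
\item Because $F_{\lambda}m$ is itself projective, its projective cover is an isomorphism, so $F_{\lambda}\pi$ is an isomorphism.
\item Since $F_{\lambda}$ preserves dimensions, $\dim P=\dim m$. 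The surjection $\pi$ is therefore an isomorphism, and $m\cong P$ is projective.
\end{enumerate}

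Now write $m\cong\bigoplus_i M(-,x_i)$. Then $F_{\lambda}m\cong\bigoplus_i N(-,Fx_i)$. Each $N(-,Fx_i)$ is a direct summand of the injective module $F_{\lambda}m$, so it is injective. The first statement of Proposition \ref{prop-preserve-proj-inj}, which holds for any covering functor, then gives that each $M(-,x_i)$ is injective. Hence $m$ is projective-injective.

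The only delicate point is the radical-preservation and top-isomorphism argument in step $(2)$. If it turns out to need more care, I would replace it by a comparison of $\mathrm{Hom}(m,-)$ on simples. Specifically, $\mathrm{Hom}_N(F_{\lambda}m,\bk_{Fy})$ decomposes as $\bigoplus_{g}\mathrm{Hom}_M(m,\bk_{gy})$ via the push-down/pull-up adjunction, and the same decomposition applies to $\mathrm{Ext}^1$. Vanishing of $\mathrm{Ext}^1_N(F_{\lambda}m,-)$ on simples would then force $\mathrm{Ext}^1_M(m,-)=0$ on simples, so $m$ would be projective.
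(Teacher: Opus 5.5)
Your proof is correct and follows essentially the paper's route: the paper obtains the corollary in one line by combining Proposition \ref{prop-preserve-proj-inj}, applied to the indecomposable projective summands $M(-,x)$, with the properties of $F_{\lambda}$ from \cite[Proposition 3.2]{BG}. Your projective-cover argument only spells out what that citation supplies, namely that $F_{\lambda}$ (exact, dimension- and radical-preserving) reflects projectivity, so the fallback via $\mathrm{Ext}^1$ is not needed.
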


Let $A$ and $A'$ be two finite-dimensional algebras given by quivers $Q$ and $Q'$ with admissible ideals $I\subseteq \bk Q$ and $I'\subseteq \bk Q'$ respectively, and suppose that there exists a Galois covering $f:(Q,I)\rightarrow (Q',I')$ with group $G$. Set
$$\mathbb{L}:=\{i\in Q_{0}\mid Ae_i\mbox{ is an injective and not uniserial module}\}$$
and
$$\mathbb{L}':=\{i'\in Q'_{0}\mid A'e_{i'}\mbox{ is an injective and not uniserial module}\},$$
where $Ae_i$ (resp. $A'e_{i'}$) denotes the indecomposable projective $A$-module (resp. $A'$-module) at vertex $i$ (resp. $i'$). Since push-down functor preserves the radical filtration of a module, for $i\in Q_0$, the indecomposable projective module $Ae_i$ is uniserial if and only if the indecomposable projective module $A'e_{f(i)}$ is uniserial. Then it follows from Corollary \ref{cor-projective-injective} that a vertex $i\in Q_0$ belongs to $\mathbb{L}$ if and only if $f(i)$ belongs to $\mathbb{L}'$.

Let $T'$ be a subset of $\mathbb{L}'$ and let $T=f^{-1}(T')$ be a subset of $\mathbb{L}$. Set
$$S_{T}:=\bigoplus\limits_{i\in T}\soc(Ae_i)$$
and
$$S'_{T'}:=\bigoplus\limits_{i'\in T'}\soc(A'e_{i'}).$$
Then $S_{T}$ (resp. $S'_{T'}$) is an ideal of $A$ (resp. $A'$).
For each $a\in T'$, choose some $x_a\in f^{-1}(a)$ and choose a uniform relation $\rho_{x_a}$ in $\bk Q$ with $s(\rho_{x_a})=e_{x_a}$ such that the residue class of $\rho_{x_a}$ in $A$ belongs to $\soc(Ae_{x_a})-\{0\}$. For each $y\in T$, define $\rho_y=g(\rho_{x_{f(y)}})$, where $g$ is the unique element in $G$ with $g(x_{f(y)})=y$. Then $\rho_y$ is a uniform relation in $\bk Q$ with $s(\rho_{y})=e_{y}$ such that the residue class of $\rho_{y}$ in $A$ belongs to $\soc(Ae_{y})-\{0\}$. Moreover, for each $a\in T'$, define $\rho'_a=(\bk f)(\rho_{x_a})$, which is a uniform relation in $\bk Q'$ with $s(\rho'_{a})=e_{a}$. Since $f:(Q,I)\rightarrow (Q',I')$ is a Galois covering of quivers with relations, it is straightforward to show that the residue class of $\rho'_{a}$ in $A'$ belongs to $\soc(A'e_{a})-\{0\}$.

The quotient algebras $\overline{A}=A/S_T$ and $\overline{A'}=A'/S'_{T'}$ are given by quivers with relations $(Q,I_T)$ and $(Q',I'_{T'})$ respectively, where $I_T$ (resp. $I'_{T'}$) is the ideal in $\bk Q$ (resp. $\bk Q'$) generated by $I$ (resp. $I'$) together with relations $\rho_y$ with $y\in T$ (resp. $\rho'_a$ with $a\in T'$). Since the set of relations $\{\rho_y\mid y\in T\}$ is invariant under $G$ and its image in $\bk Q'$ under $\bk f$ is $\{\rho'_a\mid a\in T'\}$, $f$ induces a Galois covering $(Q,I_T)\rightarrow (Q',I'_{T'})$ with group $G$.

Based on the above discussion, we have the following result.

\begin{Prop}\label{prop-induced-covering}
Let $f:(Q,I)\rightarrow (Q',I')$ be a Galois covering of finite quivers with admissible relations with group $G$, and let $A$ and $A'$ be finite-dimensional algebras given by quivers with relations $(Q,I)$ and $(Q',I')$ respectively. Let $T'$ be a subset of $Q'_0$ such that $A'e_{i'}$ is injective and not uniserial for each $i'\in T'$, and let $T=f^{-1}(T')$. Then $Ae_{i}$ is injective and not uniserial for each $i\in T$. Moreover, let
$$S_{T}=\bigoplus\limits_{i\in T}\soc(Ae_i)$$
be an ideal of $A$ and
$$S'_{T'}=\bigoplus\limits_{i'\in T'}\soc(A'e_{i'})$$
be an ideal of $A'$. Then there exist an admissible ideal $I_T$ in $\bk Q$ and an admissible ideal $I'_{T'}$ in $\bk Q'$ such that $A/S_T\cong \bk Q/I_T$, $A'/S'_{T'}\cong \bk Q'/I'_{T'}$ and such that $f$ induces a Galois covering $(Q,I_T)\rightarrow (Q',I'_{T'})$ with group $G$.
\end{Prop}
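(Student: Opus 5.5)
The plan is to assemble the statement from the discussion immediately preceding it, in three steps.

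\emph{Step 1.} The claim that $Ae_i$ is injective and not uniserial for each $i\in T$ comes from Corollary \ref{cor-projective-injective} applied to the Galois covering functor $F:\bk Q/I\rightarrow \bk Q'/I'$ induced by $f$. Since $F_{\lambda}(Ae_i)\cong A'e_{f(i)}$ is projective-injective, $Ae_i$ is projective-injective. For uniseriality, $F_{\lambda}$ is exact and preserves radicals (\cite[Proposition 3.2]{BG}), so $F_{\lambda}(\rad^n Ae_i)\cong \rad^n A'e_{f(i)}$ for all $n$. Because $F_\lambda$ sends simples to simples and preserves dimension, each radical layer of $Ae_i$ has the same length as the corresponding layer of $A'e_{f(i)}$. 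Hence $Ae_i$ is uniserial if and only if $A'e_{f(i)}$ is, and the latter is not uniserial for $f(i)\in T'$.

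\emph{Step 2.} Next we construct the socle relations $G$-equivariantly. For each $a\in T'$, fix $x_a\in f^{-1}(a)$ and a uniform relation $\rho_{x_a}$ starting at $x_a$ whose class spans $\soc(Ae_{x_a})$; this is possible because the socle is simple and one-dimensional. Freeness and transitivity of $G$ on fibres give a unique $g$ with $g(x_a)=y$ for each $y\in T$, and we set $\rho_y=g(\rho_{x_a})$. Since $g$ induces an automorphism of $A$ (as $(\bk g)(I)=I$), the class of $\rho_y$ spans $\soc(Ae_y)$. We then put $\rho'_a=(\bk f)(\rho_{x_a})$ and verify that it represents a nonzero socle element of $A'e_a$. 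Nonzeroness follows from the covering isomorphism $\bigoplus_{f(z)=b}A(x_a,z)\cong A'(a,b)$, which is injective on the summand containing $\rho_{x_a}$. Lying in the socle follows because multiplying by any arrow lifts uniquely, through the same isomorphism, to multiplication by an arrow at $x_a$, which kills the socle element.

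\emph{Step 3.} Define $I_T=I+\langle \rho_y\mid y\in T\rangle$ and $I'_{T'}=I'+\langle\rho'_a\mid a\in T'\rangle$. Then $A/S_T\cong \bk Q/I_T$ because $S_T$ is spanned by the classes of the $\rho_y$, and similarly $A'/S'_{T'}\cong\bk Q'/I'_{T'}$. Both ideals are admissible: the $\rho$'s lie in the square of the arrow ideal, since the projectives are non-simple (not uniserial), so the socle is contained in the radical squared. Finally we check the covering axioms. The generating set of $I_T$ is $G$-stable, so $(\bk g)(I_T)=I_T$. The image of the generators under $\bk f$ is exactly the generators of $I'_{T'}$, because $f(\rho_y)=f(g\rho_{x_a})=\rho'_a$. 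Since $I'$ is generated by $(\bk f)(I)$, it follows that $I'_{T'}$ is generated by $(\bk f)(I_T)$.

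I expect the main obstacle to be the verification in Step 2 that $\rho'_a$ is a nonzero socle element and that $(\bk f)$ carries ideals onto generating sets without losing elements. I will handle it entirely through the covering-functor bijections of morphism spaces, as in the proof of Proposition \ref{prop-preserve-proj-inj}.
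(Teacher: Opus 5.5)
Your proposal is correct and follows essentially the paper's route. You use Corollary \ref{cor-projective-injective} together with preservation of radical layers for the first claim, then the $G$-equivariant choice $\rho_y=g(\rho_{x_{f(y)}})$ with $\rho'_a=(\bk f)(\rho_{x_a})$, and finally $G$-stability of $\{\rho_y\}$ plus $(\bk f)(\rho_y)=\rho'_a$ to obtain the induced Galois covering $(Q,I_T)\rightarrow(Q',I'_{T'})$. Two small touch-ups: the arrows you lift in Step 2 are those starting at the terminal vertex of $\rho_{x_a}$ (not at $x_a$), and for admissibility the right reason is that an injective, non-uniserial indecomposable projective has Loewy length at least $3$ (Loewy length $2$ with simple socle forces uniseriality), so its simple socle lies in $\rad^2$ --- non-simplicity alone only gives $\rad$.
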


\subsection*{Acknowledgements}
The first author is supported by the National Natural Science Foundation of China (No.12501050).

\end{document}